\newtheoremstyle{Teorema}{5pt}{5pt}{\it}{}{\bf}{.}{ }{}
\theoremstyle{Teorema}
\newtheorem{Theorem}{Theorem}[section]
\newtheorem*{MainTheorem}{Theorem A}
\newtheorem*{ThrmB}{Theorem B}
\newtheorem*{Thrm}{Theorem}
\newtheorem{Corollary}[Theorem]{Corollary}
\newtheorem{Proposition}[Theorem]{Proposition}
\newtheorem{Definition}[Theorem]{Definition}
\newtheorem{Lemma}[Theorem]{Lemma}
\newtheoremstyle{Annotazione}{5pt}{5pt}{\rm}{}{\bf}{.}{ }{}
\theoremstyle{Annotazione}
\newtheorem{Remark}[Theorem]{Remark}
\newtheorem*{Ackno}{Acknowledgement}
\newtheorem*{Outline}{Outline}
\def\SL{\operatorname{SL}}
\def\PSL{\operatorname{PSL}}
\def\PGL{\operatorname{PGL}}
\def\GL{\operatorname{GL}}
\def\bba{\mathbb{A}}
\def\bbz{\mathbb{Z}}
\def\bbr{\mathbb{R}}
\def\bbq{\mathbb{Q}}
\def\bbc{\mathbb{C}}
\def\bbh{\mathbb{H}}
\def\bbp{\mathbb{P}}
\def\bbf{\mathbb{F}}
\def\bbn{\mathbb{N}}
\def\PO{\mathrm{P}\mathrsfs{O}^1}
\def\tr{\operatorname{tr}}
\def\Gal{\operatorname{Gal}}
\begin{document}
\selectlanguage{UKenglish}
\title{Modular Embeddings and Rigidity for Fuchsian Groups}
\author{Robert A. Kucharczyk}
\address{Universit\"{a}t Bonn\\
Mathematisches Institut\\
Endenicher Allee 60\\
53115 Bonn\\
Germany}
\email{rak@math.uni-bonn.de}
\thanks{Research supported by the European Research Council}
\keywords{Fuchsian groups, rigidity, modular embeddings, congruence subgroups, arithmetic groups, semi-arithmetic groups, thin groups}
\subjclass[2010]{20H10, 22E40, 11F06, 14G35, 20D06}

\begin{abstract}
We prove a rigidity theorem for semi-arithmetic Fuchsian groups: If $\Gamma_1$, $\Gamma_2$ are two semi-arithmetic lattices in $\PSL (2,\bbr )$ virtually admitting modular embeddings and $f\colon\Gamma_1\to\Gamma_2$ is a group isomorphism that respects the notion of congruence subgroups, then $f$ is induced by an inner automorphism of $\PGL (2,\bbr )$.\end{abstract}
\maketitle
\tableofcontents

\section{Introduction}

\noindent In 1968 George Mostow published his famous Rigidity Theorem \cite{MR0236383}: if $M_1$ and $M_2$ are two closed oriented hyperbolic manifolds of dimension $n\ge 3$ and $f\colon\pi_1(M_1)\to\pi_1(M_2)$ is a group isomorphism, then there exists a unique isometry $M_1\to M_2$ inducing $f$. This can be reformulated as a statement about lattices in the orientation-preserving isometry groups $\operatorname{PSO}(1,n)$ of hyperbolic $n$-space $\mathbf{H}^n$:
\begin{Thrm}[Mostow] Let $n\ge 3$ and let $\Gamma_1,\Gamma_2\subset\operatorname{PSO}(1,n)$ be cocompact lattices. Let $f\colon\Gamma_1\to\Gamma_2$ be an isomorphism of abstract groups. Then $f$ is conjugation by some element of the full isometry group $\operatorname{PO}(1,n)$ of $\mathbf{H}^n$, in particular $f$ extends to an algebraic automorphism of $\operatorname{PSO}(1,n)$.
\end{Thrm}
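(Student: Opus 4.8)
The plan is to follow the classical boundary-map strategy. Set $M_i=\mathbf{H}^n/\Gamma_i$; these are closed, aspherical Riemannian manifolds with fundamental group $\Gamma_i$, so $f$ is realised by a homotopy equivalence $F\colon M_1\to M_2$, which lifts to a map $\tilde F\colon\mathbf{H}^n\to\mathbf{H}^n$ satisfying $\tilde F(\gamma x)=f(\gamma)\,\tilde F(x)$ for all $\gamma\in\Gamma_1$. Choosing $F$ smooth and using compactness of $M_1$ makes $\tilde F$ Lipschitz; applying the same construction to $f^{-1}$ and observing that a lift of a self-map of $M_1$ homotopic to the identity displaces points a bounded amount (again by compactness) produces a coarse inverse. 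Hence $\tilde F$ is a quasi-isometry of $\mathbf{H}^n$.

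Since $\mathbf{H}^n$ is negatively curved, the stability of quasigeodesics (Morse lemma) shows that $\tilde F$ extends to a homeomorphism $\partial\tilde F\colon S^{n-1}\to S^{n-1}$ of the sphere at infinity, still equivariant: $\partial\tilde F\circ\gamma=f(\gamma)\circ\partial\tilde F$. The crucial analytic input is that $\partial\tilde F$ is \emph{quasiconformal}, which one extracts from the quasi-isometry constants via the fact that the visual metric on $S^{n-1}$ encodes hyperbolic distances (equivalently, by controlling the distortion of cross-ratios of boundary quadruples under $\tilde F$).

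Here the hypothesis $n\ge 3$ is essential. A quasiconformal homeomorphism of $S^{n-1}$ with $n-1\ge 2$ is absolutely continuous on almost every line and differentiable Lebesgue-almost everywhere, so it has a well-defined conformal distortion field a.e. By equivariance this field is invariant under the diagonal action of $\Gamma_1$ on $S^{n-1}\times S^{n-1}$, which is ergodic for the Lebesgue class (equivalently, the geodesic flow on $T^1M_1$ is ergodic); hence the distortion is constant a.e., and a short argument forces it to be the trivial value, so $\partial\tilde F$ is $1$-quasiconformal and therefore conformal. A conformal homeomorphism of $S^{n-1}=\partial\mathbf{H}^n$ is the boundary value of a unique isometry $g\in\operatorname{PO}(1,n)$ of $\mathbf{H}^n$, and then $g\gamma g^{-1}$ and $f(\gamma)$ agree on $\partial\mathbf{H}^n$, hence are equal in $\operatorname{PSO}(1,n)$. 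Thus $f$ is conjugation by $g$, which is an algebraic automorphism of $\operatorname{PSO}(1,n)$; uniqueness of $g$ follows since a lattice is Zariski-dense and so has trivial centraliser.

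I expect the real difficulty to lie in two places: upgrading the boundary homeomorphism from merely topological to quasiconformal, which needs a genuinely quantitative comparison between the quasi-isometry estimates inside $\mathbf{H}^n$ and metric distortion on $S^{n-1}$; and the regularity-plus-ergodicity step that improves ``quasiconformal'' to ``conformal''. The latter is precisely what breaks in dimension $n=2$, where quasiconformal self-maps of $S^1$ are abundant and the theorem is false — so any correct proof must use $n\ge 3$ exactly there. The remaining ingredients (existence of the equivariant homotopy equivalence, the quasi-isometry property, the Morse lemma, and the final conjugation bookkeeping) are standard.
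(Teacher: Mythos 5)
Your outline is exactly the classical quasiconformal argument of Mostow's original paper: equivariant quasi-isometry, boundary extension, quasiconformality, and the ergodicity step that upgrades quasiconformal to conformal (and which is precisely where $n\ge 3$ enters). The paper under review does not prove this statement at all --- it quotes it as background and cites Mostow \cite{MR0236383} --- so your sketch agrees with the approach of the cited source and correctly locates the genuinely hard analytic steps.
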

This has later been generalised by various authors; in particular, the condition that $\Gamma_j$ be cocompact can be weakened to having finite covolume, see \cite{MR0385005}. The condition that $n\neq 2$, however, is necessary: two-dimensional hyperbolic manifolds are the same as hyperbolic Riemann surfaces, which are well-known to admit deformations.

As a model for the hyperbolic plane take the upper half-plane $\mathfrak{H}=\{\tau\in\bbc\mid\operatorname{Im}\tau >0\}$, so its orientation-preserving isometry group becomes identified with $\PSL (2,\bbr )$ via M\"{o}bius transformations. In this article we prove that a variant of Mostow Rigidity does hold in $\operatorname{Isom}^+(\mathfrak{H})=\PSL (2,\bbr )$ if we restrict ourselves to a certain class of lattices, for which congruence subgroups are defined, and demand that the group isomorphism preserves congruence subgroups.

We first state our result in the simpler case of arithmetic groups. Recall that given a totally real number field $k\subset\bbr$, a quaternion algebra $B$ over $k$ which is split over the identity embedding $k\to\bbr$ and ramified over all other infinite places of $k$, an order $\mathrsfs{O}\subset B$ and an isomorphism $\varphi\colon B\otimes_k\bbr\to\mathrm{M}(2,\bbr )$ we obtain a group homomorphism $\varphi\colon\mathrsfs{O}^1\to\PSL (2,\bbr )$ whose image is a lattice, where $\mathrsfs{O}^1$ is the group of units in $\mathrsfs{O}$ with reduced norm one. A lattice $\Gamma\subset\PSL (2,\bbr )$ is called \emph{arithmetic} if $\Gamma$ is commensurable to some such $\varphi (\mathrsfs{O}^1)$.

For a nonzero ideal $\mathfrak{n}\subset\mathfrak{o}_k$ we then define the \emph{principal congruence subgroup}
$$\mathrsfs{O}^1(\mathfrak{n})=\{ b\in\mathrsfs{O}^1\mid b-1\in\mathfrak{n}\cdot\mathrsfs{O}\} .$$
If $\Gamma$ contains a subgroup of finite index in $\varphi (\mathrsfs{O}^1)$ we set $\Gamma (\mathfrak{n})=\Gamma \cap\varphi (\mathrsfs{O}^1(\mathfrak{n}))$, and a subgroup of $\Gamma$ is a \emph{congruence subgroup} if it contains some $\Gamma (\mathfrak{n})$.

\begin{Thrm}[special case of Theorem A below]
Let $\Gamma_1,\Gamma_2\subset\PSL (2,\bbr )$ be arithmetic Fuchsian groups, and let $f\colon\Gamma_1\to\Gamma_2$ be an isomorphism of abstract groups such that for every subgroup $\Delta\subseteq\Gamma_1$ of finite index, $\Delta$ is a congruence subgroup of $\Gamma_1$ if and only if $f(\Delta )$ is a congruence subgroup of $\Gamma_2$.

Then there exists $a\in\PGL (2,\bbr )$ such that $f$ is conjugation by $a$. In particular, $\Gamma_2=a\Gamma_1a^{-1}$.
\end{Thrm}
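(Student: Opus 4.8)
The plan is to show that the hypothesis on congruence subgroups forces $f$ to respect the quaternion-algebraic structure underlying $\Gamma_1$ and $\Gamma_2$, and then to extract the conjugating element from the Skolem--Noether theorem. Passing to finite-index congruence subgroups on both sides, compatibly via $f$ --- which is harmless, since a $\PGL(2,\bbr)$-conjugation agreeing with $f$ on a finite-index, hence Zariski-dense, subgroup agrees with $f$ everywhere --- we may assume $\Gamma_j$ has finite index in $\varphi_j(\mathrsfs{O}_j^1)$ for some choice of totally real field $k_j\subset\bbr$, quaternion algebra $B_j$ over $k_j$ split at the identity place and ramified at all other infinite places, order $\mathrsfs{O}_j\subset B_j$, and splitting $\varphi_j$. \emph{Step 1 (pass to the congruence completions).} Since $f$ and $f^{-1}$ carry finite-index congruence subgroups to finite-index congruence subgroups, $f$ is a homeomorphism for the congruence topologies and extends to an isomorphism $\hat f\colon\widehat{\Gamma}_1\to\widehat{\Gamma}_2$ of congruence completions. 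By strong approximation for the simply connected $\bbq$-group $\mathrm{Res}_{k_j/\bbq}\SL_1(B_j)$ --- applicable because $B_j$ is split at the identity real place, so this group is isotropic over $\bbr$ --- the completion $\widehat{\Gamma}_j$ is, as a topological group, an open compact subgroup $\overline{\Gamma}_j$ of the finite-adelic group $\SL_1(B_j)(\bba_{k_j,f})$; inside it $\Gamma_j$ sits diagonally, and it spans $B_j$ as a $\bbq$-algebra, this last being the standard fact that the invariant trace field and invariant quaternion algebra of an arithmetic Fuchsian group are $k_j$ and $B_j$.

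\emph{Step 2 (from groups to algebras).} By functoriality of group rings, $\hat f$ induces a $\bbz$-algebra homomorphism $\bbz[\overline{\Gamma}_1]\to\bbz[\overline{\Gamma}_2]\hookrightarrow B_2\otimes_{\bbq}\bba_{k_2,f}$; using that $\hat f$ preserves the lattice of principal congruence subgroups, which control the topology of the profinite orders topologically generated by the $\overline{\Gamma}_j$, one checks this map is continuous and so extends to an isomorphism of those orders. Tensoring with $\bbq$ yields a ring isomorphism $B_1\otimes_{\bbq}\bba_{k_1,f}\to B_2\otimes_{\bbq}\bba_{k_2,f}$ carrying $\Gamma_1$ to $\Gamma_2$, and hence, since $\Gamma_j$ spans $B_j$ over $\bbq$, restricting to a $\bbq$-algebra isomorphism $\theta\colon B_1\to B_2$ with $\theta|_{\Gamma_1}=f$. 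In particular $\theta$ restricts to an isomorphism $k_1=Z(B_1)\to Z(B_2)=k_2$.

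\emph{Step 3 (pin the archimedean place; Skolem--Noether).} The isomorphism $\theta$ matches up the real places of $k_1$ over which $B_1$ splits with those of $k_2$ over which $B_2$ splits; by the standing hypothesis there is exactly one of each when $k_j\neq\bbq$ (and when $k_j=\bbq$ there is only one real place at all), so $\theta$ carries the identity embedding $k_1\hookrightarrow\bbr$ to the identity embedding $k_2\hookrightarrow\bbr$. Tensoring $\theta$ up to $\bbr$ along these embeddings and composing with $\varphi_1,\varphi_2$ gives an $\bbr$-algebra isomorphism $\mathrm{M}(2,\bbr)\to\mathrm{M}(2,\bbr)$ sending the image of $\gamma$ to the image of $f(\gamma)$ for every $\gamma\in\Gamma_1$. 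By Skolem--Noether it is conjugation by some $\tilde a\in\GL(2,\bbr)$, and its image $a\in\PGL(2,\bbr)$ then satisfies $a\gamma a^{-1}=f(\gamma)$ for all $\gamma$, so $\Gamma_2=a\Gamma_1a^{-1}$ and $f$ is conjugation by $a$.

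I expect the real content to lie in Step 1 together with the start of Step 2: making precise the adelic description of the congruence completion (this is where strong approximation, the one substantial external input, enters) and verifying that the abstract group isomorphism $\hat f$ linearises to a ring isomorphism. The conceptual point is that the congruence hypothesis lets $f$ descend to the congruence completions, which --- unlike the full profinite completions, for which no congruence subgroup property holds for these groups --- still carry the entire quaternion-algebraic structure; that the final output is an element of the \emph{real} group $\PGL(2,\bbr)$ even though the congruence completions see only \emph{finite} places is precisely what the semi-arithmeticity hypothesis (a unique split real place) is there to guarantee. The remaining technical points --- the bookkeeping distinguishing $\PSL$ from $\SL$ (handled by passing to a torsion-free finite-index subgroup and choosing compatible lifts, or by working throughout with $\tr\gamma^2$ and the invariant algebra) and the reduction to finite-index congruence subgroups --- are routine.
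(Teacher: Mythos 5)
There is a genuine gap, and it sits exactly where you predict the ``real content'' lies: the linearisation step at the start of Step 2. Functoriality of group rings gives you an isomorphism of abstract group rings $\bbz[\overline{\Gamma}_1]\to\bbz[\overline{\Gamma}_2]$, and each $\bbz[\overline{\Gamma}_j]$ surjects onto the closed subring of $B_j\otimes_{k_j}\bba_{k_j}^f$ generated by $\overline{\Gamma}_j$. But for this to descend to a ring isomorphism of those subrings (and hence of the orders, and of $B_1\to B_2$ after restricting to the $\bbq$-spans), you need $\hat f$ to carry the kernel of the first surjection into the kernel of the second, i.e.\ to preserve every $\bbz$-linear relation $\sum_g n_g\,\varphi_1(g)=0$ holding among group elements inside the quaternion algebra. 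Nothing in the hypothesis makes this automatic, and continuity does not help: the obstruction is algebraic, not topological. In a quaternion algebra linear relations among units are governed by the bilinear form $(x,y)\mapsto\tr(xy)$, so preserving the relation ideal is essentially equivalent to preserving traces --- which is the entire content of the theorem, not a formal consequence of having a group isomorphism. Your sentence ``one checks this map is continuous and so extends to an isomorphism of those orders'' is therefore asserting the crux rather than proving it. (A correct substitute would require genuine input, e.g.\ rigidity results for compact open subgroups of $p$-adic groups; it is not ``routine''.)

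The paper closes precisely this gap by a different route: it never constructs an algebra isomorphism. Instead it shows directly that $\tr^2 f(\gamma)=\tr^2\gamma$ for all $\gamma$, by (a) proving that normal congruence subgroups with quotient $\PSL(2,q)$ are exactly the $\Gamma(\mathfrak p)$ for primes $\mathfrak p$ of norm $q$ outside a finite bad set (Proposition \ref{CongruenceQuotientsGivePrimes}), so $f$ matches up primes of $k_1$ and $k_2$ over almost every rational prime $p$; (b) using that every automorphism of $\PSL(2,p^f)$ preserves $\tr^2$ up to Frobenius (Definition \ref{AbstractTraces}) to conclude that the characteristic polynomials of $\tr^2\gamma$ and $\tr^2f(\gamma)$ agree modulo infinitely many $p$, hence agree in $\bbz[x]$; and (c) singling out $\tr^2\gamma$ among the roots of its characteristic polynomial as the unique root $>4$ (this is where arithmeticity, via $\lvert\sigma(\tr\gamma)\rvert\le 2$ for $\sigma\neq\mathrm{id}$, enters --- your observation that the unique split real place pins down the identity embedding is the correct intuition, but it is used at the level of traces, not of an algebra map). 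The conclusion then follows from the trace-rigidity statement Theorem \ref{CullerShalenForPSLTwoR} (Culler--Shalen adapted to $\PSL(2,\bbr)$), with Skolem--Noether appearing only inside Lemma \ref{ReductionToRealCaseConjugation}. Your Steps 1 and 3 are sound in outline, but Step 2 as written does not go through.
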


Now both the notion of congruence subgroup and our result can be extended to a larger class of Fuchsian groups.

For a subgroup $\Gamma\subseteq\PSL (2,\bbr )$ denote the preimage in $\SL (2,\bbr )$ by $\tilde{\Gamma }$. A lattice $\Gamma\subset\PSL (2,\bbr )$ is called \emph{semi-arithmetic} if $\tr^2\gamma$ is a totally real algebraic integer for each $\gamma\in\tilde{\Gamma }$; this notion is invariant under commensurability. It was introduced in \cite{MR1745404}, and many classes of Fuchsian groups are semi-arithmetic:
\begin{enumerate}
\item Arithmetic lattices are semi-arithmetic.
\item All Fuchsian triangle groups $\Delta (p,q,r)$ are semi-arithmetic. However, they fall into infinitely many commensurability classes, only finitely many of which are arithmetic, see \cite{MR0429744}.
\item In \cite{MR1745404} further examples of semi-arithmetic groups which are not arithmetic were constructed by giving explicit generators.
\item The theory of flat surfaces provides for another construction of semi-arithmetic groups. If $X$ is a closed Riemann surface and $\omega$ is a holomorphic one-form on $X$ which is not identically zero, a simple geometric construction yields the Veech group\footnote{The name first appeared in \cite{MR1397686} but these groups were studied before from different points of view, see \cite{MR1005006}.} $\SL (X,\omega )$ which is a discrete subgroup of $\SL (2,\bbr )$. In certain cases the Veech group is a lattice, and then its image in $\PSL (2,\bbr )$ is a semi-arithmetic group by \cite[Theorems 5.1, 5.2]{MR1992827} and \cite[Proposition 2.6]{MR2188128}. Veech groups are never cocompact, see \cite[p. 509]{MR2186246}, therefore a Veech group which is a lattice is arithmetic if and only if it is commensurable to $\SL (2,\bbz )$.\footnote{For a complete characterisation of $(X,\omega )$ whose Veech group is arithmetic see \cite[Theorem 4]{MR1397686}.} In \cite{MR1992827} we find, for every real quadratic number field $k$, the construction of a lattice Veech group contained in $\SL (2,\mathfrak{o}_k)$ which is therefore semi-arithmetic but not arithmetic.
\end{enumerate}
Examples (ii) and (iv) intersect: in \cite[Theorem 6.12]{MR2680418} it is proved that all non-cocompact triangle groups $\Delta (p,q,\infty )$ are commensurable to some Veech group. On the other hand, cocompact triangle groups can never be Veech groups, and only finitely many of the examples in \cite{MR1992827} are commensurable with triangle groups.

The generalisation of the notion of congruence subgroups to semi-arithmetic groups is a bit involved; we refer the reader to section \ref{SectionSemiArithmetic}.

Now the conclusion of Theorem A does not hold for general semi-arithmetic groups; we need to impose one more condition which is the existence of a \emph{modular embedding}: let $\Gamma\subset\PSL (2,\bbr )$ be a semi-arithmetic subgroup and let $k$ be the number field generated by all $\tr^2\gamma$ with $\gamma\in\tilde{\Gamma }$. Then for every embedding $\sigma\colon k\to\bbr$ there exists a group embedding $i_{\sigma }\colon\tilde{\Gamma }\to\SL (2,\bbr )$, unique up to conjugation in $\GL (2,\bbr )$, such that $\tr^2i_{\sigma }(\gamma )=\sigma (\tr^2\gamma )$ for every $\gamma\in\tilde{\Gamma }$, see \cite[Remark 4]{MR1745404}. The original group $\Gamma$ is arithmetic precisely if no $i_{\sigma }(\tilde{\Gamma })$ for $\sigma$ different from the identity embedding contains a hyperbolic element. In general, let $\sigma_1,\ldots ,\sigma_r$ be those embeddings $\sigma$ for which $i_{\sigma }(\tilde{\Gamma })$ contains a hyperbolic element. Then the coordinate-wise embedding $(i_{\sigma_1},\ldots ,i_{\sigma_r})\colon\Gamma\to\PSL (2,\bbr )^r$ maps $\Gamma$ to an irreducible arithmetic group $\Lambda\subset\PSL (2,\bbr )^r$; for the precise construction see section \ref{SectionModularEmbedding}.

We note that if $\Gamma$ is not already arithmetic itself, it is mapped into $\Lambda$ with Zariski-dense image of infinite index; such groups are called \emph{thin}. This is essentially due to S.\ Geninska \cite[Proposition 2.1 and Corollary 2.2]{MR2968231}; we explain it below in Corollary \ref{SemiArithmeticAreThin}.

Now $\Lambda$ acts on $\mathfrak{H}^r$ by coordinate-wise M\"{o}bius transformations, and a \emph{modular embedding} for $\Gamma$ is then a holomorphic map $F\colon\mathfrak{H}\to\mathfrak{H}^r$ equivariant for $\Gamma\to\Lambda$.
\begin{enumerate}
\item If $\Gamma$ is arithmetic, then $r=1$ and $\Lambda$ contains $\Gamma$ as a finite index subgroup. We may take $F(\tau )=\tau$ as a modular embedding.
\item All Fuchsian triangle groups admit modular embeddings, see \cite[Theorem p. 96]{MR1075639}.
\item Most of the new examples of semi-arithmetic groups in \cite{MR1745404} do not admit modular embeddings, see \cite[Corollary 4]{MR1745404}.
\item Veech groups which are lattices always admit modular embeddings, see \cite[Corollary 2.11]{MR2188128}. This solves \cite[Problem 1]{MR1745404} which asks whether every Fuchsian group admitting a modular embedding is arithmetic or commensurable with a triangle group: there exist Veech groups which are neither\footnote{Almost all of McMullen's genus two examples in \cite{MR1992827} do the job: only finitely many real quadratic fields appear as invariant trace fields of triangle groups, so if $k$ is not among them, then any lattice Veech group with trace field $k$ cannot be commensurable to a triangle group, and it cannot be arithmetic either since it is not cocompact.}, but do admit modular embeddings.
\end{enumerate}

More generally, we say $\Gamma$ \emph{virtually admits a modular embedding} if some finite index subgroup of $\Gamma$ admits one.
\begin{MainTheorem}
For $j=1,2$, let $\Gamma_j\subset\PSL (2,\bbr )$ be semi-arithmetic lattices which virtually admit modular embeddings. Let $f\colon\Gamma_1\to\Gamma_2$ be an isomorphism of abstract groups such that for every subgroup $\Delta\subseteq\Gamma_1$ of finite index, $\Delta$ is a congruence subgroup of $\Gamma_1$ if and only if $f(\Delta )$ is a congruence subgroup of $\Gamma_2$.

Then there exists $a\in\PGL (2,\bbr )$ such that $f$ is conjugation by $a$. In particular, $\Gamma_2=a\Gamma_1a^{-1}$.
\end{MainTheorem}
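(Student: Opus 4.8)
The plan is to reduce Theorem~A to a statement about the associated arithmetic envelopes $\Lambda_j\subset\PSL(2,\bbr)^{r_j}$ and then invoke Mostow--Margulis rigidity there. First I would pass to finite-index subgroups: by hypothesis each $\Gamma_j$ virtually admits a modular embedding, and since the hypothesis on $f$ is visibly compatible with restriction to finite-index congruence subgroups (the notion of congruence subgroup is inherited), I may replace $\Gamma_1$ by a finite-index subgroup $\Delta_1$ admitting a modular embedding, and $\Gamma_2$ by $f(\Delta_1)$, which is then forced to be a congruence subgroup of $\Gamma_2$ and hence, after one more shrink, also admits a modular embedding. So without loss of generality both $\Gamma_j$ themselves admit modular embeddings $F_j\colon\mathfrak{H}\to\mathfrak{H}^{r_j}$ equivariant for $\Gamma_j\hookrightarrow\Lambda_j$, with $\Lambda_j$ an irreducible arithmetic lattice in $\PSL(2,\bbr)^{r_j}$.

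The crucial intermediate step is to show that $f$ extends to an isomorphism $\Lambda_1\to\Lambda_2$ of the arithmetic envelopes, and moreover that $r_1=r_2=:r$. The idea is that the embeddings $i_\sigma$ are determined up to $\GL(2,\bbr)$-conjugacy by the squared traces, and the squared-trace data is in turn reconstructible from the abstract group together with its congruence structure: the invariant trace field $k_j$ and the Galois embeddings that produce hyperbolic $i_\sigma(\tilde\Gamma_j)$ are precisely what the congruence-subgroup lattice ``sees''. Concretely, the congruence topology on $\Gamma_j$ is built from the arithmetic structure on $\Lambda_j$ (completions at finite places of $k_j$ and of the quaternion algebra), so an isomorphism $f$ respecting congruence subgroups is in particular continuous and open for the congruence topologies; taking congruence completions yields an isomorphism of the profinite (adelic) completions, from which one extracts an isomorphism $k_1\cong k_2$ of invariant trace fields matching up the relevant real places, hence $r_1=r_2$, and then an abstract isomorphism $\Lambda_1\to\Lambda_2$ restricting to $f$ on the thin subgroups. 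This is where I expect the main difficulty to lie: one must check that the congruence completion of the thin group $\Gamma_j$ really does recover the full arithmetic group $\Lambda_j$ (strong approximation / the fact that $\Gamma_j$ is Zariski-dense of finite index in no proper $k$-subgroup), and that the correspondence between congruence subgroups and the adelic structure is faithful enough to pin down the number field together with its archimedean places, not just up to some ambiguity. Here Corollary~\ref{SemiArithmeticAreThin} and the precise description in section~\ref{SectionSemiArithmetic} of congruence subgroups will be doing the heavy lifting.

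Once $f$ is known to extend to $\Phi\colon\Lambda_1\xrightarrow{\sim}\Lambda_2$ between irreducible lattices in $\PSL(2,\bbr)^r$, I would apply Mostow rigidity (for $r\ge 2$, in the strong form for higher-rank or for products, i.e.\ Margulis superrigidity, using finite covolume in the semisimple group $\PSL(2,\bbr)^r$) to conclude that $\Phi$ is conjugation by some $a=(a_1,\ldots,a_r)\in\PGL(2,\bbr)^r$, possibly after composing with a permutation of the factors; irreducibility and the matching of real places rules out the permutation being nontrivial in a way that would obstruct descent. In the remaining case $r=1$ both groups are arithmetic and this is exactly the ``special case of Theorem~A'' already stated, proved directly by Mostow rigidity for the arithmetic lattices $\Lambda_j\subset\PSL(2,\bbr)$ containing $\Gamma_j$ with finite index.

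Finally I would descend from the product to a single factor. The modular embeddings $F_j$ are equivariant holomorphic maps $\mathfrak{H}\to\mathfrak{H}^r$; the conjugating element $a$ intertwines $\Lambda_1$ and $\Lambda_2$, hence $a\circ F_1$ and $F_2\circ(\text{lift of }f)$ are two $\Gamma_2$-equivariant holomorphic maps $\mathfrak{H}\to\mathfrak{H}^r$ with the same target lattice, and by uniqueness of equivariant maps (or by examining the first coordinate, where both $\Gamma_j$ sit as the original Fuchsian groups via $i_{\mathrm{id}}$) one reads off that the first component $a_1\in\PGL(2,\bbr)$ already satisfies $a_1\Gamma_1 a_1^{-1}=\Gamma_2$ and that $f$ is conjugation by $a_1$. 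The point is that $i_{\sigma_1}=\mathrm{id}$ is canonically the original inclusion, so projecting the product conjugacy to that coordinate gives the desired $a\in\PGL(2,\bbr)$. I expect this last step to be essentially formal once the extension to $\Lambda_j$ and the rigidity input are in place; the genuine obstacle remains reconstructing the arithmetic envelope and its number-theoretic invariants purely from the congruence-subgroup structure.
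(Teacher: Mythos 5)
Your strategy diverges fundamentally from the paper's, and it contains two gaps I do not see how to close. The pivotal step --- extending $f$ to an isomorphism $\Lambda_1\to\Lambda_2$ of the arithmetic envelopes by ``taking congruence completions and extracting an isomorphism $k_1\cong k_2$'' --- is exactly where the difficulty hides, and the route you sketch is obstructed. An isomorphism respecting congruence subgroups does induce an isomorphism of congruence completions, i.e.\ of compact open subgroups of $G_1(\bba^f)$ and $G_2(\bba^f)$; but this only determines the splitting behaviour of almost all rational primes in $k_j$ (Remark \ref{ReconstructionOfArithmeticSimilarityClass}), hence the trace field only up to arithmetic equivalence, not up to isomorphism: the paper's concluding remarks exhibit non-isomorphic totally real fields giving arithmetic Fuchsian groups with isomorphic prime-to-$S$ congruence completions. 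Even granting $k_1\cong k_2$, you would still have to extend $f$ from the thin, infinite-index subgroup $\Gamma_1\subset\Lambda_1$ to all of $\Lambda_1$; an abstract isomorphism of thin subgroups carries no canonical extension to the ambient lattices, and nothing in your sketch produces one. Worse, your base case $r=1$ appeals to ``Mostow rigidity for the arithmetic lattices $\Lambda_j\subset\PSL(2,\bbr)$'', but Mostow rigidity fails for lattices in $\PSL(2,\bbr)$ --- that failure is the very reason this paper exists, and its section 9 exhibits two abstractly isomorphic, non-conjugate arithmetic Fuchsian groups of signature $(1;2)$. So the arithmetic case is not a formality that rigidity disposes of; it needs the congruence hypothesis in an essential way, which your argument does not use there.

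The paper avoids both problems by never extending $f$ at all. It first proves a $\PSL(2,\bbr)$ version of Culler--Shalen rigidity (Theorem \ref{CullerShalenForPSLTwoR}): if the squared traces of two lattice embeddings agree on a finite-index subgroup, the embeddings are conjugate in $\PGL(2,\bbr)$. It then verifies $\tr^2 f(\gamma)=\tr^2\gamma$ element by element: matching congruence quotients isomorphic to $\PSL(2,q)$ matches prime ideals of $k_1$ and $k_2$ (Proposition \ref{CongruenceQuotientsGivePrimes}), the abstract squared-trace map on $\PSL(2,p)$ gives $\chi_{\tr^2\gamma}\equiv\chi_{\tr^2 f(\gamma)}\bmod p$ for infinitely many $p$, hence equality of characteristic polynomials over $\bbz$, so $\tr^2\gamma$ and $\tr^2 f(\gamma)$ are Galois conjugate algebraic integers; finally the modular embedding, via the Schwarz Lemma (Proposition \ref{ModularEmbeddingEnablesToPinDownTrace}), singles out $\tr^2\gamma$ as the largest Galois conjugate when $\gamma$ is hyperbolic, forcing equality. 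This trace-theoretic argument is precisely the substitute for the superrigidity that is unavailable in rank one and for thin groups; without something playing that role, your outline does not go through.
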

This theorem will be proved in section \ref{SectionCongruenceRigidity}. It rests on the following result on congruence subgroups in semi-arithmetic groups, which may be of independent interest.
\begin{ThrmB}
Let $\Gamma\subset\PSL (2,\bbr )$ be a semi-arithmetic lattice satisfying the trace field condition\footnote{This is a technical condition which is always satisfied after passing to a finite index subgroup, see Definition \ref{DefTFC}.}, with trace field $k$. Then there exists a finite set $S(\Gamma )$ of rational primes with the following property:
\begin{enumerate}
\item If $\mathfrak{p}$ is a prime ideal in $k$ not dividing any element of $S(\Gamma )$, then $\Gamma /\Gamma (\mathfrak{p})\simeq\PSL (2,\mathfrak{o}_k/\mathfrak{p})$.
\item If $q$ is a rational prime power not divisible by any element of $S(\Gamma )$ and $\Delta$ is a normal congruence subgroup of $\Gamma$ with $\Gamma /\Delta \simeq\PSL (2,q)$, then there exists a unique prime ideal $\mathfrak{p}$ of $k$ of norm $q$ with $\Delta =\Gamma (\mathfrak{p})$.
\end{enumerate}
\end{ThrmB}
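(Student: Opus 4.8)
The plan is to reduce everything to strong approximation for thin groups, applied to $\tilde\Gamma$ inside the quaternion algebra it generates. After a conjugation $\tilde\Gamma$ lies in $G(k)$, where $G$ is the reduced-norm-one group of the quaternion $k$-algebra generated by $\tilde\Gamma$ (this is a quaternion algebra because a non-elementary Fuchsian group is absolutely irreducible in $\GL_2$); it is finitely generated, and its Zariski closure over $k$, being a $k$-subgroup that becomes all of $\SL_2$ over $\bbr$ through the identity place, is $G$. Having fixed an order containing $\tilde\Gamma$ --- with respect to which the congruence subgroups $\Gamma(\mathfrak{n})$ of section~\ref{SectionSemiArithmetic} are defined --- I would let $S(\Gamma)$ be a finite set of rational primes containing $2,3,5$, all primes below the finite ramification locus of that algebra, and the exceptional set produced by the theorem of Matthews--Vaserstein--Weisfeiler (and Nori). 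Enlarging $S(\Gamma)$ if necessary, that theorem lets one describe the closure of $\tilde\Gamma$ in $\prod_\mathfrak{p}G(\mathfrak{o}_\mathfrak{p})$ as an honest \emph{direct product} $\bigl(\prod_{\mathfrak{p}\nmid S(\Gamma)}\SL(2,\mathfrak{o}_\mathfrak{p})\bigr)\times\mathcal{H}$, with $\mathcal{H}$ a closed subgroup of $\prod_{\mathfrak{q}\mid S(\Gamma)}G(\mathfrak{o}_\mathfrak{q})$ (away from $S(\Gamma)$ the algebra is split and the order maximal, so $G(\mathfrak{o}_\mathfrak{p})=\SL(2,\mathfrak{o}_\mathfrak{p})$). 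Getting from the usual ``open image'' form to this product shape is where the primes are thrown into $S(\Gamma)$: the good factor is a product of the perfect groups $\SL(2,\mathfrak{o}_\mathfrak{p})$ (perfect because $2,3\in S(\Gamma)$ makes every such residue field have more than three elements), so a nontrivial common quotient of it with a subgroup of the bad factor would yield a common nonabelian composition factor, necessarily some $\PSL(2,\mathfrak{o}/\mathfrak{p})$ with $\mathfrak{p}\nmid S(\Gamma)$ and, by Dickson's classification of finite subgroups of $\PSL_2$ (the bad local factors being $\SL_2$ of a local ring or the pro-solvable norm-one group of a local division algebra), also $A_5$ or $\PSL(2,q')$ with $q'$ a power of a prime in $S(\Gamma)$ --- impossible, as $2,5\in S(\Gamma)$ and $\PSL(2,a)\simeq\PSL(2,b)$ forces $a=b$ unless $\{a,b\}=\{4,5\}$. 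Passing to quotients by centres, the congruence completion $\overline\Gamma=\varprojlim_\mathfrak{n}\Gamma/\Gamma(\mathfrak{n})$ becomes $\overline P\times\overline{\mathcal{H}}$ with $\overline P=\prod_{\mathfrak{p}\nmid S(\Gamma)}\PSL(2,\mathfrak{o}_\mathfrak{p})$.

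Part~(i) is then immediate: for $\mathfrak{p}\nmid S(\Gamma)$ the composite $\Gamma\hookrightarrow\overline\Gamma\to\overline P\to\PSL(2,\mathfrak{o}_\mathfrak{p})\to\PSL(2,\mathfrak{o}/\mathfrak{p})$ is surjective, and its kernel is by definition $\Gamma(\mathfrak{p})$, so $\Gamma/\Gamma(\mathfrak{p})\simeq\PSL(2,\mathfrak{o}/\mathfrak{p})$.

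For part~(ii), let $\Delta\trianglelefteq\Gamma$ be a normal congruence subgroup with $\Gamma/\Delta\simeq L:=\PSL(2,q)$, $q$ coprime to $S(\Gamma)\supseteq\{2,3,5\}$; then $q\ge 7$, $L$ is a nonabelian finite simple group, and its isomorphism type determines $q$. Since $\Delta$ is a congruence subgroup, its closure $\overline\Delta$ in $\overline\Gamma$ is open and normal, with $\overline\Gamma/\overline\Delta\simeq\Gamma/\Delta\simeq L$ and $\overline\Delta\cap\Gamma=\Delta$. As $\overline P$ and $\overline{\mathcal{H}}$ are normal in $\overline\Gamma$ and centralise each other, only one of them can have nontrivial image in the simple group $L$; so $L$ is a quotient of $\overline P$ or of $\overline{\mathcal{H}}$. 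The second case is excluded: by the elementary fact that a nonabelian simple quotient of a subgroup of a direct product is a quotient of a subgroup of one of the factors, $L$ would be a quotient of a subgroup of a single $G(\mathfrak{o}_\mathfrak{q})$; after killing the pro-$p$ part, Dickson forces $L\simeq A_5$ or $L\simeq\PSL(2,q')$ with $q'$ a power of a prime in $S(\Gamma)$, impossible. So $L$ is a quotient of $\overline P=\prod_{\mathfrak{p}\nmid S(\Gamma)}\PSL(2,\mathfrak{o}_\mathfrak{p})$, hence --- being finite and simple --- of one factor $\PSL(2,\mathfrak{o}_{\mathfrak{p}_0})$; killing the level-$\mathfrak{p}_0$ congruence subgroup (pro-$p$, with trivial image in $L$) exhibits $L$ as a quotient of $\PSL(2,\mathfrak{o}/\mathfrak{p}_0)$, which is therefore simple and $\simeq L$, so $q=N(\mathfrak{p}_0)$ and $\mathfrak{p}_0\nmid S(\Gamma)$. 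Now $\overline{\mathcal{H}}$ and all factors of $\overline P$ except $\PSL(2,\mathfrak{o}_{\mathfrak{p}_0})$ lie in $\overline\Delta$; since $\ker\bigl(\PSL(2,\mathfrak{o}_{\mathfrak{p}_0})\to\PSL(2,\mathfrak{o}/\mathfrak{p}_0)\bigr)$ is the unique maximal open normal subgroup of the perfect profinite group $\PSL(2,\mathfrak{o}_{\mathfrak{p}_0})$, we get $\overline\Delta=\ker\bigl(\overline\Gamma\to\overline P\to\PSL(2,\mathfrak{o}_{\mathfrak{p}_0})\to\PSL(2,\mathfrak{o}/\mathfrak{p}_0)\bigr)$, and intersecting with $\Gamma$ yields $\Delta=\Gamma(\mathfrak{p}_0)$. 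Uniqueness follows because $\overline P$ is a genuine direct product: for distinct primes $\mathfrak{p}_1\ne\mathfrak{p}_2$ of norm $q$ --- automatically coprime to $S(\Gamma)$ --- one has $\Gamma/\Gamma(\mathfrak{p}_1\mathfrak{p}_2)\simeq\PSL(2,\mathfrak{o}/\mathfrak{p}_1)\times\PSL(2,\mathfrak{o}/\mathfrak{p}_2)$, of order $|L|^2$, so $\Gamma(\mathfrak{p}_1)\ne\Gamma(\mathfrak{p}_2)$.

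The one genuine difficulty is the first step: passing from the standard ``open image'' statement of strong approximation to the exact direct-product description of $\overline\Gamma$ over good versus bad primes, reconciling it with the intrinsic congruence subgroups $\Gamma(\mathfrak{n})$ of section~\ref{SectionSemiArithmetic}, and choosing $S(\Gamma)$ large enough to foreclose every accidental appearance of $\PSL(2,q)$ --- from prime-power levels, from the bad primes, and from the exceptional isomorphism $\PSL(2,4)\simeq\PSL(2,5)$. The rest is standard finite and profinite group theory: simple quotients of (subgroups of) direct products, Dickson's theorem, perfectness of $\SL(2,\mathfrak{o}_\mathfrak{p})$ for residue field of size $\ge 4$, and the normal subgroup lattice of $\PSL(2,\mathfrak{o}_\mathfrak{p})$.
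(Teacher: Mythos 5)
Your proof is correct, but it is organised around a stronger structural statement than the one the paper actually proves, so the two arguments are worth contrasting. Both rest on the same foundations: strong approximation for the Zariski-dense group $\tilde{\Gamma}$ (Proposition \ref{StrongApproxSemiArithm}, via Nori and Matthews--Vaserstein--Weisfeiler), the local computation of congruence quotients, and the fact that $\PSL (2,q)\simeq\PSL (2,q')$ forces $q=q'$ away from small primes. The difference is what you do with the open image: you upgrade it to an exact splitting $\overline{\Gamma }\simeq\overline{P}\times\overline{\mathcal{H}}$ with $\overline{P}$ the \emph{full} product over the good primes, and then read off both parts of the theorem by pure profinite group theory (commuting normal subgroups in a simple quotient, Dickson's theorem for the bad factor, uniqueness of the maximal open normal subgroup of $\PSL (2,\mathfrak{o}_{\mathfrak{p}})$). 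The paper never establishes this splitting: it fixes one auxiliary ideal $\mathfrak{m}$ with $\overline{\tilde{\Gamma }}\supseteq\mathrsfs{O}^1(\mathfrak{m})$, works entirely with the finite quotients $\PO /\PO (\mathfrak{n})$, discards the bad part of the level by an order-divisibility argument (built into the definition of $S(\Gamma )$ via $S(\mathfrak{m})$, replacing your appeal to Dickson), kills the solvable pieces $\PO (\operatorname{rad}\mathfrak{n}')/\PO (\mathfrak{n}')$ and the $(\bbz /2\bbz )^d$ kernel of Corollary \ref{ChineseRemainderForPSL}, and isolates a single prime with the bespoke finite-group Lemma \ref{LemmaOnFiniteGroups}; part (i) is just Proposition \ref{CorollaryGammaLocallyCongruence} plus Lemma \ref{OoneModOonepIsSLTwo}. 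Your route buys a cleaner global picture (the product decomposition immediately gives the independence of distinct primes used for uniqueness, and the reconstruction statement of Remark \ref{ReconstructionOfArithmeticSimilarityClass}); its cost is the Goursat-type step you correctly single out as the crux --- though note it is even easier than you make it, since an open subgroup of $\prod_{\mathfrak{p}}G(\mathfrak{o}_{\mathfrak{p}})$ already \emph{contains} the full product away from a finite set of primes, so no common-composition-factor argument is needed there. The paper's route stays with finite groups throughout and avoids any claim about the shape of the completion. Two small points to tidy in your write-up: the passage from $\SL$ to $\PSL$ introduces a central $2$-group (handled in the paper by Corollary \ref{ChineseRemainderForPSL}) which you elide when writing $\overline{P}=\prod\PSL (2,\mathfrak{o}_{\mathfrak{p}})$, harmlessly since it is abelian and normal; and you should say explicitly that the congruence topology induced on $\Gamma$ by the subgroups $\Gamma (\mathfrak{n})$ agrees with the subspace topology from $G(\bba^f)$, which is the content of the bijection quoted from \cite{MR2192012}, so that $\overline{\Delta }\cap\Gamma =\Delta$ holds as you claim.
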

Here, (i) is a combination of Proposition \ref{CorollaryGammaLocallyCongruence} and Lemma \ref{OoneModOonepIsSLTwo}; (ii) is Proposition \ref{CongruenceQuotientsGivePrimes}.

In particular, the information which groups $\PSL (2,q)$ appear how often as congruence quotients determines the splitting behaviour of all but finitely many primes in $k$ (see Remark \ref{ReconstructionOfArithmeticSimilarityClass}). On the other hand, allowing noncongruence quotients we get many more finite groups. The collection of all these finite groups will determine the abstract isomorphism type of a Fuchsian lattice, but of course no more, see \cite[Theorem 1.1]{arxiv14013645}.
\begin{Outline}
In sections 2 and 3 we fix notations and recall standard results on the group $\PSL (2)$, both over the reals and over finite fields. In sections 4 and 5 we introduce semi-arithmetic subgroups of $\PSL (2,\bbr )$ and study their congruence subgroups. The object of section 6 is the deduction of a statement about $\PSL (2)$ from an analogous result for $\SL (2)$ by Culler and Shalen \cite[Proposition 1.5.2]{MR683804}: a finitely generated subgroup of $\PSL (2,\bbr )$ is determined up to conjugacy by its squared traces. This allows us to work with numbers instead of matrices in the remainder of the article. In section 7 we formally define modular embeddings and discuss some consequences of their existence. Then in section 8 the previous observations are used to prove Theorem A and the hard part of Theorem B. Section 9 presents an example with two arithmetic groups, sharpening the statement of Theorem A considerably in this special case. Finally section 10 discusses some possible and impossible generalisations.
\end{Outline}
\begin{Ackno}
This work is part of the author's ongoing dissertation project at the University of Bonn. He wishes to thank his supervisor, Ursula Hamenst\"{a}dt, for many fruitful discussions, and helpful comments on earlier versions of this article.
\end{Ackno}

\section{Traces on $\PSL (2)$ and M\"{o}bius Transformations}\label{SectionTraces}

\noindent For every ring $A$ we set $\PGL (2,A) = \GL (2,A) /A^{\times }$ where $A^{\times }$ is embedded by means of scalar matrices. We also set $\PSL (2,A)=\SL (2,A)/\{\pm\mathbf{1}\}$. There is an obvious homomorphism $\PSL (2,A)\to \PGL (2,A)$, but in general it is neither injective nor surjective.

Let $k$ be a field. The determinant homomorphism $\GL (2,k)\to k^{\times }$ descends to a homomorphism $\PGL (2,k)\to k^{\times }/(k^{\times })^2$, and we obtain a short exact sequence
\begin{equation}\label{SesPSL}
1\longrightarrow \PSL (2,k)\longrightarrow \PGL (2,k)\longrightarrow k^{\times }/(k^{\times })^2\longrightarrow 1.
\end{equation}
In particular, $\PSL (2,\bbc )$ and $\PGL (2,\bbc )$ are naturally isomorphic whereas for $k=\bbr$ or a finite field of odd characteristic, $\PSL (2,k)$ becomes identified with an index two normal subgroup of $\PGL (2,k)$.

Note that since $\PSL (2,k)$ is a normal subgroup of $\PGL (2,k)$, the latter operates faithfully on the former by conjugation. Since $\tr (-g)=-\tr g$, the squared trace map $\tr^2\colon\SL (2,k)\to k$ descends to a map
$$\tr^2\colon\PSL (2,k)\to k,\quad\{ g,-g\}\mapsto (\tr g)^2.$$
For $k=\bbr$ we also define
$$\lvert\tr\rvert\colon\PSL (2,\bbr )\to\bbr,\quad \{ g,-g\}\mapsto\lvert\tr g\rvert .$$
Let $\mathfrak{H}=\{\tau\in\bbc\mid\operatorname{Im} (\tau )>0\}$ be the upper half-plane. The group $\SL (2,\bbr )$ operates on $\mathfrak{H}$ in the well-known way by M\"{o}bius transformations, descending to a faithful action by $\PSL (2,\bbr )$. This in fact identifies $\PSL (2,\bbr )$ with both the group of holomorphic automorphisms and that of orientation-preserving isometries (for the Poincar\'{e} metric) of $\mathfrak{H}$. Elements of $\PSL (2,\bbr )$ can be categorised by their behaviour on $\mathfrak{H}$, see \cite[section 1.3]{MR1177168}:
\begin{Proposition}
Let $\pm\mathbf{1}\neq g\in\PSL (2,\bbr )$. Then $g$ belongs to exactly one of the following classes:
\begin{enumerate}
\item $g$ is \emph{elliptic:} it has a unique fixed point in $\mathfrak{H}$, and $\tr^2 g<4$.
\item $g$ is \emph{parabolic:} it has a unique fixed point in $\bbp^1(\bbr )$, but not in $\mathfrak{H}$. Its squared trace satisfies $\tr^2 g=4$.
\item $g$ is \emph{hyperbolic:} it has two distinct fixed points in $\bbp^1(\bbr )$, one of them repelling and one of them attracting, but no fixed points in $\mathfrak{H}$. Its squared trace satisfies $\tr^2 g>4$.
\end{enumerate}
\end{Proposition}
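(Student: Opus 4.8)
The plan is to reduce everything to an explicit computation with a lift of $g$ to $\SL(2,\bbr)$. Fix a representative $\left(\begin{smallmatrix} a & b \\ c & d\end{smallmatrix}\right)\in\SL(2,\bbr)$ of $g$; since a real matrix acts on $\bbp^1(\bbc)$ commuting with complex conjugation, it preserves the partition $\bbp^1(\bbc)=\mathfrak{H}\sqcup\bbp^1(\bbr)\sqcup\overline{\mathfrak{H}}$, so the non-real fixed points of $g$ come in complex conjugate pairs and it suffices to locate the fixed points in $\mathfrak{H}\cup\bbp^1(\bbr)$. Note also that every $g\neq\pm\mathbf{1}$ has at least one fixed point in $\bbp^1(\bbc)$, because the fixed-point equation is (at worst) quadratic.

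Next I would run the computation. If $c\neq 0$, the equation $\tfrac{a\tau+b}{c\tau+d}=\tau$ becomes $c\tau^2+(d-a)\tau-b=0$, whose discriminant equals, using $ad-bc=1$, exactly $(a+d)^2-4=\tr^2 g-4$. Hence for $\tr^2 g>4$ there are two distinct real fixed points and none in $\mathfrak{H}$; for $\tr^2 g=4$ there is a single (double) real fixed point; and for $\tr^2 g<4$ the roots are a conjugate pair, exactly one lying in $\mathfrak{H}$. If $c=0$ then $\infty$ is fixed and $ad=1$: if $a\neq d$ there is a second real fixed point and $\tr^2 g=(a+d)^2>4$ since $ad=1$ forces $|a+d|>2$; if $a=d$ then $a=d=\pm1$, $b\neq 0$ because $g\neq\pm\mathbf{1}$, the point $\infty$ is the unique fixed point, and $\tr^2 g=4$. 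This already yields the trichotomy, shows the three cases are mutually exclusive and exhaustive, and pins down the stated sign of $\tr^2 g-4$ in each case.

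It remains to describe the dynamics. In the hyperbolic case I would conjugate in $\PSL(2,\bbr)$ so that the two fixed points become $0$ and $\infty$; then $g$ is represented by a diagonal matrix with eigenvalues $\lambda,\lambda^{-1}$ ($\lambda^2\neq1$), i.e.\ $g(\tau)=\lambda^2\tau$, and one reads off directly that one fixed point attracts and the other repels under iteration. Similarly, conjugating the elliptic fixed point to $i$ exhibits $g$ as a rotation (confirming $0\le\tr^2 g<4$), and conjugating the parabolic fixed point to $\infty$ exhibits $g$ as a nontrivial translation $\tau\mapsto\tau+t$. I do not expect a genuine obstacle here: this is the classical classification, and the only point needing care is the bookkeeping between $\SL$ and $\PSL$, which is harmless because the sign ambiguity in the lift affects neither $\tr^2$ nor the M\"obius action; for the routine details I would simply refer to \cite[section 1.3]{MR1177168}.
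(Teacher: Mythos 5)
Your proof is correct and complete: the discriminant computation $(d-a)^2+4bc=\tr^2 g-4$, the separate treatment of $c=0$, and the normal forms for the dynamics are all accurate, and the $\SL$ versus $\PSL$ bookkeeping is handled properly. The paper itself offers no proof of this proposition --- it simply cites \cite[section 1.3]{MR1177168} --- and your argument is exactly the standard one found there, so there is nothing further to compare.
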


\section{The Finite Groups $\PSL (2,q)$}\label{SectionFiniteGroups}

\noindent Next we study $\PSL (2)$ over finite fields. With $\bbf_q$ being the field of $q$ elements we also write $\PSL (2,q)$ instead of $\PSL (2,\bbf_q)$.

\begin{Proposition}
If $q>3$ is an odd prime power, $\PSL (2,q)$ is a simple group of order $\frac{1}{2}q(q^2-1)$. Furthermore $\PSL (2,q)\simeq\PSL (2,q')$ if and only if $q=q'$.
\end{Proposition}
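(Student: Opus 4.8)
The plan is to treat the two assertions in turn; only the first takes real work. For the order, I would count ordered bases of $\bbf_q^2$ to get $\lvert\GL(2,\bbf_q)\rvert=(q^2-1)(q^2-q)$, whence $\lvert\SL(2,\bbf_q)\rvert=q(q^2-1)$ after dividing by $\lvert\bbf_q^{\times}\rvert=q-1$ via the determinant. Since $q$ is odd, $-\mathbf{1}\neq\mathbf{1}$ in $\SL(2,\bbf_q)$, so $\PSL(2,q)=\SL(2,\bbf_q)/\{\pm\mathbf{1}\}$ has order $\tfrac12 q(q^2-1)$.

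For simplicity I would invoke Iwasawa's criterion: a group acting faithfully and primitively on a set, which is perfect and has some point stabiliser containing a normal abelian subgroup whose conjugates generate the whole group, is simple. Apply this to $G=\PSL(2,q)$ acting on $\bbp^1(\bbf_q)$. The action is faithful (the kernel of $\SL(2,\bbf_q)\to\operatorname{Sym}(\bbp^1(\bbf_q))$ is exactly the scalars $\{\pm\mathbf 1\}$) and $2$-transitive, since $G$ is transitive on $\bbp^1(\bbf_q)$ and the stabiliser of $[1:0]$ acts transitively on the remaining $q$ points; hence it is primitive. That stabiliser is the image of the upper-triangular Borel subgroup, and the image $A$ of the unipotent subgroup $\bigl\{\left(\begin{smallmatrix}1&b\\0&1\end{smallmatrix}\right)\bigr\}$ is normal in it and abelian; the $G$-conjugates of $A$ contain the images of all elementary matrices, which generate $\PSL(2,q)$. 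The only hypothesis sensitive to the bound $q>3$ is perfectness: since $q-1>2$ there is $a\in\bbf_q^{\times}$ with $a^2\neq 1$, and the identity
$$\left(\begin{smallmatrix}a&0\\0&a^{-1}\end{smallmatrix}\right)\left(\begin{smallmatrix}1&b\\0&1\end{smallmatrix}\right)\left(\begin{smallmatrix}a&0\\0&a^{-1}\end{smallmatrix}\right)^{-1}\left(\begin{smallmatrix}1&b\\0&1\end{smallmatrix}\right)^{-1}=\left(\begin{smallmatrix}1&(a^2-1)b\\0&1\end{smallmatrix}\right)$$
exhibits every upper unipotent as a commutator; the transposed computation does the same for lower unipotents, and together these generate $\SL(2,\bbf_q)$, so $\PSL(2,q)$ equals its own commutator subgroup.

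For the final assertion I would note that it requires neither simplicity nor the prime-power hypothesis: an isomorphism $\PSL(2,q)\simeq\PSL(2,q')$ forces equality of orders, i.e. $q^3-q=(q')^3-q'$, and $t\mapsto t^3-t$ is strictly increasing on $[1,\infty)$, hence injective there, so $q=q'$; the converse is trivial.

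I expect the only mild obstacle to be the bookkeeping in the simplicity argument — checking the hypotheses of Iwasawa's criterion — with the perfectness computation for $q>3$ (which visibly fails for $q=2,3$) being the single point where an explicit calculation is unavoidable. All of this is entirely classical and could alternatively be cited from a standard reference on finite groups of Lie type.
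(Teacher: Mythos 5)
Your proposal is correct, and where the paper actually argues it takes the same route: the order is obtained by the standard count (the paper points to the exact sequence (\ref{SesPSL}), you count bases of $\bbf_q^2$, which amounts to the same computation), and the final claim is deduced in both cases from the strict monotonicity of $q\mapsto\tfrac12 q(q^2-1)$. The only divergence is that for simplicity the paper simply cites a standard reference, whereas you carry out the classical Iwasawa-criterion argument in full — correctly, including the observation that $q>3$ is needed precisely for perfectness via the commutator identity with $a^2\neq 1$.
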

\begin{proof}
The simplicity of $\PSL (2,q)$ is a well-known fact, see e.g.\ \cite[section 3.3.2]{MR2562037}. The order of $\PSL (2,q)$ is easily calculated using (\ref{SesPSL}), for instance. The function $q\mapsto \frac{1}{2}q(q^2-1)$ is strictly increasing on $\bbn$, therefore if $\PSL (2,q)$ and $\PSL (2,q')$ have the same orders, $q=q'$.
\end{proof}
As remarked in section \ref{SectionTraces}, $\PGL (2,q)$ operates by conjugation on $\PSL (2,q)$. Furthermore the Frobenius automorphism $\varphi\colon\bbf_q\to\bbf_q$ defined by $\varphi (x)=x^p$, where $p$ is the prime of which $q$ is a power, defines an automorphism $\varphi$ of $\PSL (2,q)$. The following is also well-known, see e.g.\ \cite[Theorem 3.2.(ii)]{MR2562037}:
\begin{Proposition}
The automorphism group of $\PSL (2,q)$ is generated by $\PGL (2,q)$ and $\varphi$.
\end{Proposition}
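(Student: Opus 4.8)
The plan is to realise $\operatorname{Aut}(G)$, where $G=\PSL(2,q)$ and $q=p^f$, as a group of permutations of the projective line and then to pin it down there. Recall from the previous proposition that $G$ is simple. A Sylow $p$-subgroup $P\le G$ has order $q$, and its normalizer $N_G(P)$ is a Borel subgroup; sending a Borel subgroup to the unique point of $\bbp^1(\bbf_q)$ it fixes is a bijection, equivariant for the conjugation action of $G$, between the set $\mathcal S$ of Sylow $p$-subgroups of $G$ (equivalently, of Borel subgroups, via $P\mapsto N_G(P)$) and $\bbp^1(\bbf_q)$. Every $\alpha\in\operatorname{Aut}(G)$ permutes $\mathcal S$, so there is a homomorphism $\rho\colon\operatorname{Aut}(G)\to\operatorname{Sym}(\mathcal S)$ restricting on $\operatorname{Inn}(G)\cong G$ to the standard faithful action of $G$ on $q+1$ points. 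First I would check that $\rho$ is injective: its kernel is normal in $\operatorname{Aut}(G)$ and meets the normal subgroup $\operatorname{Inn}(G)$ trivially (the $G$-action on $\bbp^1(\bbf_q)$ being faithful), hence centralises $\operatorname{Inn}(G)$, and the centraliser of $\operatorname{Inn}(G)$ in $\operatorname{Aut}(G)$ is trivial because $Z(G)=1$. The same computation shows that $\rho(\operatorname{Aut}(G))$ normalises $\rho(\operatorname{Inn}(G))$; thus $\rho$ identifies $\operatorname{Aut}(G)$ with a subgroup of $N:=N_{\operatorname{Sym}(\mathcal S)}(\PSL(2,q))$, the normalizer in the full symmetric group of $\PSL(2,q)$ in its natural action on $q+1$ points.

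Secondly, conjugation by $\PGL(2,q)$ and the Frobenius $\varphi$ both normalise $\PSL(2,q)$ inside $\operatorname{Sym}(\bbp^1(\bbf_q))$, and under $\rho$ they correspond to the obvious subgroups $\PGL(2,q)$ and $\langle\varphi\rangle$ of $\operatorname{P\Gamma L}(2,q)\le\operatorname{Sym}(\bbp^1(\bbf_q))$; hence $\rho(\operatorname{Aut}(G))\supseteq\operatorname{P\Gamma L}(2,q)$. If one establishes the reverse inclusion $N\subseteq\operatorname{P\Gamma L}(2,q)$, then $\rho$ restricts to an isomorphism $\operatorname{Aut}(G)\xrightarrow{\ \sim\ }\operatorname{P\Gamma L}(2,q)$ carrying the conjugations by $\PGL(2,q)$ together with $\varphi$ to a generating set of the target; by injectivity of $\rho$ these elements then generate $\operatorname{Aut}(G)$, which is exactly the assertion. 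To prove $N\subseteq\operatorname{P\Gamma L}(2,q)$ one shows that a permutation $\pi$ of $\bbp^1(\bbf_q)$ normalising $\PSL(2,q)$ is semilinear: after composing $\pi$ with a suitable element of $\PGL(2,q)$ we may assume it fixes $0$, $1$ and $\infty$; then $\pi$ normalises the stabiliser of $\infty$ and its characteristic unipotent radical $U$, which identifies the $q$ points other than $\infty$ with the additive group $(\bbf_q,+)$ compatibly with the action of the diagonal torus $\bbf_q^\times$, forcing $\pi$ to be a field automorphism of $\bbf_q$, i.e.\ $\pi\in\Gal(\bbf_q/\bbf_p)=\langle\varphi\rangle$.

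The main obstacle is this normalizer computation $N_{\operatorname{Sym}(q+1)}(\PSL(2,q))=\operatorname{P\Gamma L}(2,q)$: although the reduction just indicated makes it plausible, carrying out the additive/torus bookkeeping rigorously is somewhat delicate, and strictly speaking one should inspect the small prime powers $q\in\{5,7,9\}$ separately, since the exceptional isomorphisms $\PSL(2,7)\simeq\PSL(3,2)$ and $\PSL(2,9)\simeq A_6$ alter the appearance of the subgroup lattice (the statement nevertheless remains correct in these cases). Alternatively — and this is in effect what the cited reference does — one invokes the general description of automorphisms of finite simple groups of Lie type: the Dynkin diagram of type $A_1$ has no symmetry, so there are no graph automorphisms; the diagonal automorphisms form $\PGL(2,q)/\PSL(2,q)\simeq\bbf_q^\times/(\bbf_q^\times)^2$, which is trivial for $q$ even and of order $2$ for $q$ odd; and the field automorphisms form $\Gal(\bbf_q/\bbf_p)=\langle\varphi\rangle$ — together precisely the group generated by $\PGL(2,q)$ and $\varphi$.
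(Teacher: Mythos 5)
The paper does not actually prove this proposition; it simply cites \cite[Theorem 3.2.(ii)]{MR2562037}, so your argument is necessarily a different route: it is the classical Schreier--van der Waerden proof. Your outline is correct. The reduction of $\operatorname{Aut}(G)$ to a subgroup of $N_{\operatorname{Sym}(q+1)}(\PSL(2,q))$ via the action on Sylow $p$-subgroups is sound (the injectivity argument using $Z(G)=1$ and the fact that two normal subgroups with trivial intersection centralise each other is exactly right), and you correctly identify the one genuinely delicate step, namely $N_{\operatorname{Sym}(q+1)}(\PSL(2,q))=\operatorname{P\Gamma L}(2,q)$. To close that step along the lines you indicate: after normalising $\pi$ to fix $0,1,\infty$, conjugation by $\pi$ induces an additive bijection $\sigma$ of $\bbf_q\simeq U$ with $\sigma(1)=1$ satisfying $\sigma(sx)=\sigma(s)\sigma(x)$ for all \emph{squares} $s$ (since the torus of $\PSL$, not $\PGL$, acts on $U$ by $t^2$); one then observes that the additive span of the squares is an $\bbf_p$-subalgebra of $\bbf_q$ containing more than $\sqrt q$ elements, hence all of $\bbf_q$, so $\sigma$ is multiplicative everywhere and is a field automorphism. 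With that supplement the proof is complete, and it is more self-contained than the paper's citation; your alternative appeal to the Steinberg description of automorphisms of groups of Lie type (no graph automorphisms for $A_1$, diagonal part $\PGL/\PSL$, field part $\Gal(\bbf_q/\bbf_p)$) is in effect what the cited reference records. The worry about $q\in\{5,7,9\}$ is harmless: the Sylow-subgroup construction is intrinsic and unaffected by the exceptional isomorphisms, and the resulting equality $\operatorname{Aut}(\PSL(2,9))=\operatorname{P\Gamma L}(2,9)$ (of order $1440$) is consistent with $\operatorname{Out}(A_6)\simeq(\bbz/2\bbz)^2$.
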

In particular if $q=p$ is a prime, then every automorphism of $\PSL (2,p)$ is the restriction of an inner automorphism of $\PGL (2,p)$, and the map $\tr^2\colon\PSL (2,p)\to\bbf_p$ is invariant under all automorphisms. So the following definition works:
\begin{Definition}\label{AbstractTraces}
Let $G$ be a finite group which is abstractly isomorphic to some $\PSL (2,p)$ for an odd prime $p$. Then the map $\tr_G^2\colon G\to\bbf_p$
is defined as follows: choose some automorphism $\alpha\colon G\to\PSL (2,p)$, then set $\tr_G^2=\tr^2\circ\alpha$.
\end{Definition}
If $p$ is replaced by a prime power $q$, the corresponding map on $G$ is only are well-defined up to automorphisms of $\bbf_q$, i.e.\ we may define a map $\tr_G^2\colon G\to\bbf_q/\operatorname{Aut}\bbf_q$.
\begin{Lemma}\label{LemmaOnFiniteGroups}
Let $n\in\bbn$ and let $q_1,\ldots ,q_n,q'$ be odd prime powers. Let
$$\beta\colon G=\PSL (2,q_1)\times\cdots\times\PSL (2,q_n)\to\PSL (2,q')$$
be a group epimorphism. Then there is a $1\le j\le n$ such that $q'=q_j$ and for some automorphism $\alpha$ of $\PSL (2,q')$ we can write $\beta =\alpha\circ\operatorname{pr}_j$, where $\operatorname{pr}_j$ is the projection on the $j$-th factor.
\end{Lemma}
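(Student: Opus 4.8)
The plan is to leverage that, for $q_i>3$, the factor $\PSL(2,q_i)$ is a nonabelian simple group, while the exceptional factor $\PSL(2,3)\simeq A_4$ --- though not simple --- is still centerless and has the property that every surjective endomorphism of it is bijective. Write $G_i\subseteq G$ for the $i$-th direct factor; then each $G_i$ is normal in $G$, distinct factors commute elementwise, $G=G_1\cdots G_n$, and $\ker\operatorname{pr}_j=\prod_{i\ne j}G_i$. So what one must show is that $\ker\beta=\ker\operatorname{pr}_j$ for a suitable index $j$ with $q_j=q'$.

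The first step is to study the subgroups $H_i:=\beta(G_i)\subseteq\PSL(2,q')$. Since $G_i\triangleleft G$ and $\beta$ is surjective, $H_i$ is a normal subgroup of $\PSL(2,q')$; it is at the same time a homomorphic image of $\PSL(2,q_i)$. Comparing the short list of homomorphic images of $\PSL(2,q_i)$ --- namely $\{1,\PSL(2,q_i)\}$ when $q_i>3$, and $\{1,C_3,A_4\}$ when $q_i=3$ --- with the list of normal subgroups of $\PSL(2,q')$, and using that $|\PSL(2,q)|=\tfrac12 q(q^2-1)$ is strictly increasing in $q$, one finds that $H_i=1$ unless $\PSL(2,q_i)\simeq\PSL(2,q')$, in which case $q_i=q'$ and $H_i=\PSL(2,q')$.

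The second step is to show that exactly one $H_i$ is nontrivial. The $H_i$ commute pairwise and $H_1\cdots H_n=\beta(G)=\PSL(2,q')$, so they cannot all be trivial; and if $H_j=\PSL(2,q')$, then each $H_i$ with $i\ne j$ centralises $H_j$, hence lies in $Z(\PSL(2,q'))=1$. So there is a unique $j$ with $H_j=\PSL(2,q')$, necessarily $q_j=q'$, and $G_i\subseteq\ker\beta$ for all $i\ne j$. The restriction $\beta|_{G_j}\colon\PSL(2,q_j)\to\PSL(2,q')$ is then a surjection between two copies of the same group, with kernel a proper normal subgroup --- trivial by simplicity if $q_j>3$, and trivial if $q_j=3$ since a surjective endomorphism of a finite group is injective --- hence an automorphism $\alpha$ of $\PSL(2,q')$. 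Finally, writing $g=(g_1,\dots,g_n)\in G$ as the product of its one-coordinate components and applying $\beta$, which kills every component but the $j$-th, yields $\beta(g)=\alpha(g_j)=\alpha(\operatorname{pr}_j(g))$; that is, $\beta=\alpha\circ\operatorname{pr}_j$.

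The one genuinely fiddly point is the prime power $q=3$: there $\PSL(2,3)=A_4$ is not simple, so for a source factor one cannot invoke ``a nontrivial normal subgroup of a simple group is everything'', and must instead use the chain $1\subsetneq V_4\subsetneq A_4$ together with the order formula to rule out $H_i$ equalling $C_3$ or $V_4$; as a target, one uses in its place that $A_4$ is centerless and that a surjective endomorphism of a finite group is an automorphism. With these adjustments every step above goes through unchanged. If instead one simply assumes $q_i,q'>3$ throughout --- which is all that the applications require --- the proof reduces to elementary facts about direct products of nonabelian simple groups.
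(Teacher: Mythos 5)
Your proof is correct, and it takes a genuinely different route from the paper's. The paper first invokes Jordan--H\"{o}lder to pin down $q'$ as one of the $q_j$, and then argues by induction on $n$: a nontrivial kernel element $g$ with $g_{n+1}\neq 1$ is played off against an element $h$ supported only in the last factor, and the commutator identity $\beta(ghg^{-1}h^{-1})=1$ shows $\beta$ has nontrivial kernel on that factor, hence (by simplicity) kills it, so $\beta$ descends to a product of $n$ factors. Your argument instead analyses the images $H_i=\beta(G_i)$ directly: they are normal in the target, pairwise commuting, and generate it, so centrelessness of $\PSL(2,q')$ forces exactly one $H_j$ to be nontrivial and all other factors to lie in $\ker\beta$. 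This avoids both the induction and the cardinality step, and it isolates the two properties actually doing the work (simplicity, or at least the normal-subgroup lattice, of the source factors; trivial centre of the target). You are also more careful than the paper on the edge case $q=3$: the paper's proof tacitly uses simplicity of every $\PSL(2,q_i)$ and of $\PSL(2,q')$, which fails for $\PSL(2,3)\simeq A_4$ even though the statement as written permits it; your substitute arguments (normal subgroups of $A_4$ versus its quotients, and surjective endomorphisms of finite groups being automorphisms) close that gap, harmlessly in practice since the application only ever uses $q>3$.
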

\begin{proof}
By the Jordan--H\"{o}lder theorem, the only simple quotients of $G$ are the $\PSL (2,q_j)$, so $q'=q_j$ for some $j$.

We now proceed by induction on $n$. For $n=1$ the lemma is trivial, so assume the lemma has been proved for $n$. Let $\beta\colon G\to \PSL (2,q')$ be an epimorphism where $G$ has $n+1$ factors. For cardinality reasons it cannot be injective, so there exists some $g\in G\smallsetminus\{ 1\}$ with $\beta (g)=1$. Write $g=(g_1,\ldots ,g_{n+1})$, then $g_j\neq 1$ for some $j$; for simplicity of notation assume that $j=n+1$. Since $\PSL (2,q_{n+1})$ has trivial centre, there exists some $h_{n+1}\in G$ which does not commute with $g_{n+1}$. Then set
$$h=(1,\ldots ,1,h_{n+1})\in G$$
and compute
$$1=\beta(h)\beta(h^{-1})=\beta (ghg^{-1}h^{-1})=\beta (1,\ldots ,1,g_{n+1}h_{n+1}g_{n+1}^{-1}h_{n+1}^{-1})$$
using $\beta (g)=1$. That is, $\beta$ restricted to the $(n+1)$-st factor has nontrivial kernel. Since that factor is simple, the restriction of $\beta$ to the $(n+1)$-st factor has to be trivial, so $\beta$ factors through the projection onto the first $n$ factors, hence (by induction hypothesis) onto one of them.
\end{proof}

\section{Semi-Arithmetic Groups and their Congruence Subgroups}\label{SectionSemiArithmetic}

\noindent Let $\Gamma\subset\PSL (2,\bbr )$ be a lattice and let $\tilde{\Gamma }$ be its preimage in $\SL (2,\bbr )$. By $\Gamma^{(2)}$ we denote the subgroup of $\Gamma$ generated by all $\gamma^2$ with $\gamma\in\Gamma$. Since $\Gamma$ is finitely generated, $\Gamma^{(2)}$ is then a normal subgroup of finite index in $\Gamma$.
\begin{Definition}\label{DefTFC}
The \emph{trace field} of $\Gamma$ is the field $\bbq (\tr\Gamma )\subset\bbr$ generated by all $\tr\gamma$ with $\gamma\in\tilde{\Gamma }$. The \emph{invariant trace field} of $\Gamma$ is the trace field of $\Gamma^{(2)}$.

A lattice $\Gamma$ satisfies the \emph{trace field condition} if its trace field and its invariant trace field agree.
\end{Definition}
Clearly the trace field contains the invariant trace field, but the two are not always equal. As the name suggests, the invariant trace field is the more useful invariant: commensurable lattices have the same invariant trace field, see \cite[Theorem 3.3.4]{MR1937957}, but not necessarily the same trace field.
Hence, if $\Gamma$ is any lattice then $\Gamma^{(2)}$ satisfies the trace field condition. Therefore any lattice has a finite index normal sublattice which satisfies the trace field condition.
\begin{Definition}
A lattice $\Gamma\subset\PSL (2,\bbr )$ is called \emph{semi-arithmetic} if its invariant trace field is a totally real number field and every trace $\tr\gamma$ for $\gamma\in\tilde{\Gamma }$ is an algebraic integer.\footnote{It follows from \cite[Lemma 3.5.6]{MR1937957} that this is equivalent to the definition given in the introduction.}
\end{Definition}
Being semi-arithmetic is stable under commensurability, therefore every semi-arithmetic lattice contains a semi-arithmetic lattice satisfying the trace field condition. 
For the following constructions let $\Gamma$ be a semi-arithmetic lattice satisfying the trace field condition, and let $k=\bbq (\tr\gamma )$. Then the sub-$k$-vector space $B=k[\Gamma ]$ of $\mathrm{M}(2,\bbr )$ generated by $\tilde{\Gamma}$ is in fact a sub-$k$-algebra, more precisely a quaternion algebra over $k$. The sub-$\mathfrak{o}_k$-algebra $\mathfrak{o}_k[\tilde{\Gamma }]$ of $B$ generated by $\tilde{\Gamma }$ is an order in $B$, though not necessarily a maximal one. We choose a maximal order $\mathrsfs{O}\supseteq\mathfrak{o}_k[\tilde{\Gamma }]$.

If $\mathrsfs{O}^1$ denotes the subgroup of $\mathrsfs{O}^{\times }$ consisting of elements with reduced norm one, $\tilde{\Gamma }$ becomes a subgroup of $\mathrsfs{O}^1$. Also write $\PO =\mathrsfs{O}^1/\{\pm\mathbf{1}\}$ so that $\Gamma$ is a subgroup of $\PO$.
\begin{Proposition}\label{EquivalentCharacterisationsArithmetic}
Let $\Gamma\subset\PSL (2,\bbr )$ be a semi-arithmetic lattice satisfying the trace field condition. Then the following are equivalent:
\begin{enumerate}
\item $\Gamma$ is arithmetic.
\item Let $k=\bbq (\tr\Gamma )\subset\bbr$. Then for every embedding $\sigma\colon k\to\bbr$ other than the identity inclusion and every $\gamma\in\tilde{\Gamma }$ one has $\lvert\sigma (\tr\gamma )\rvert\le 2$.
\item For every embedding $\sigma\colon k\to\bbr$ other than the identity inclusion, $B\otimes_{k,\sigma }\bbr$ is isomorphic to Hamilton's quaternions $\bbh$.
\item $\PO$ is a discrete subgroup of $\PSL (2,\bbr )$.
\item The index $(\PO :\Gamma )$ is finite.
\end{enumerate}
\end{Proposition}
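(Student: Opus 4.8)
The plan is to establish the cycle of implications (i) $\Rightarrow$ (iii) $\Rightarrow$ (iv) $\Rightarrow$ (v) $\Rightarrow$ (i) and the separate equivalence (ii) $\Leftrightarrow$ (iii). Two preliminary facts will be used repeatedly. First, since $B=k[\Gamma ]$ is a quaternion algebra embedded in $\mathrm{M}(2,\bbr )$, the inclusion extends to an $\bbr$-algebra isomorphism $B\otimes_{k,\mathrm{id}}\bbr\xrightarrow{\sim}\mathrm{M}(2,\bbr )$, so $B$ is always split at the identity place. Second, a lattice in $\PSL (2,\bbr )$ is never virtually solvable, hence $\tilde{\Gamma }$ is Zariski-dense in $\SL (2,\bbr )$, and therefore also in the $k$-algebraic group $G$ of reduced-norm-one elements of $B$ --- and the same holds for the image of $\tilde{\Gamma }$ under any real embedding $B\hookrightarrow B\otimes_{k,\sigma }\bbr$.

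For (ii) $\Leftrightarrow$ (iii): if $B\otimes_{k,\sigma }\bbr\cong\bbh$, its reduced-norm-one group is the compact $3$-sphere, on which the reduced trace has absolute value at most $2$; since $\sigma (\tr\gamma )$ is the reduced trace of the image of $\gamma\in\tilde{\Gamma }$ in $B\otimes_{k,\sigma }\bbr$, this yields (ii). Conversely, if $B\otimes_{k,\sigma }\bbr\cong\mathrm{M}(2,\bbr )$ for some $\sigma\neq\mathrm{id}$, then by the second fact the corresponding image of $\tilde{\Gamma }$ in $\SL (2,\bbr )$ is Zariski-dense, hence dense in the real topology, hence contains hyperbolic elements; any such element witnesses $\sigma (\tr\gamma )^2>4$, contradicting (ii). Thus under (ii) every $B\otimes_{k,\sigma }\bbr$ with $\sigma\neq\mathrm{id}$ is a four-dimensional real division algebra, i.e.\ isomorphic to $\bbh$, which is (iii).

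For the cycle: (iii) $\Rightarrow$ (iv) because $B\otimes_{\bbq }\bbr\cong\mathrm{M}(2,\bbr )\times\bbh^{[k:\bbq ]-1}$, so $\mathrsfs{O}$ is a full $\bbz$-lattice in this product, $\mathrsfs{O}^1$ is discrete in the reduced-norm-one subgroup $\SL (2,\bbr )\times\mathrm{SU}(2)^{[k:\bbq ]-1}$, and projection to the first factor preserves discreteness because the remaining factors are compact; hence $\PO$ is discrete in $\PSL (2,\bbr )$. Next, (iv) $\Rightarrow$ (v) since a discrete subgroup of $\PSL (2,\bbr )$ containing the lattice $\Gamma$ has a fundamental domain contained in one for $\Gamma$, hence is itself a lattice, hence contains $\Gamma$ with finite index. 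For (v) $\Rightarrow$ (i): finiteness of $(\PO :\Gamma )$ forces $\PO =\varphi (\mathrsfs{O}^1)$ to be discrete, so (iv) holds; and were (iii) to fail, $B$ would be split at $s\ge 2$ infinite places, so $\mathrsfs{O}^1$ would be an irreducible lattice in $\SL (2,\bbr )^s\times\mathrm{SU}(2)^t$, and after quotienting out the compact factors an irreducible lattice in $\SL (2,\bbr )^s$ with $s\ge 2$, which by the Borel density theorem projects densely --- hence non-discretely --- onto each $\SL (2,\bbr )$-factor, contradicting discreteness of $\PO$. So (iii) holds, $\varphi (\mathrsfs{O}^1)$ is arithmetic by definition, and $\Gamma$, commensurable with it, is arithmetic. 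Finally, (i) $\Rightarrow$ (iii): for arithmetic $\Gamma$ the invariant quaternion algebra $A\Gamma$, defined over the invariant trace field $k_0$, is ramified at all infinite places but one; the trace field condition gives $k_0=k$, and since $A\Gamma\subseteq k[\Gamma ]=B$ with both quaternion algebras over $k$ they agree, so $B$ is ramified at all infinite places but the identity.

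The step I expect to be the main obstacle --- in the sense of being the point where one leaves elementary considerations behind --- is the arithmetic input: (i) $\Rightarrow$ (iii) rests on identifying $A\Gamma$ as the invariant quaternion algebra of an arithmetic Fuchsian group (Takeuchi; see \cite{MR1937957}), and the negation of (iii) implying the negation of (iv), used inside (v) $\Rightarrow$ (i), rests on the Borel density theorem in the form that an irreducible arithmetic lattice in a product of at least two copies of $\SL (2,\bbr )$ has dense, hence non-discrete, projection to each simple factor. Everything else is routine once $\tilde{\Gamma }$ is known to be Zariski-dense in $G$.
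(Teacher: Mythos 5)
Your argument is essentially sound and considerably more detailed than the paper's, which simply cites Takeuchi \cite{MR0398991} for (i) $\Leftrightarrow$ (ii) and the standard classification of arithmetic Fuchsian groups (\cite[chapter 5]{MR1177168}, \cite[chapter 8]{MR1937957}) for the remaining equivalences. But one step, as literally written, is false: inside (ii) $\Rightarrow$ (iii) you claim that the image of $\tilde{\Gamma }$ in $B\otimes_{k,\sigma }\bbr\cong\mathrm{M}(2,\bbr )$ is ``Zariski-dense, hence dense in the real topology''. Zariski density does not imply topological density for subgroups of $\SL (2,\bbr )$ --- $\SL (2,\bbz )$ is the standard counterexample --- and here the failure is not hypothetical: the paper's concluding remarks recall that there exist semi-arithmetic lattices whose Galois conjugates $i_{\sigma }(\Gamma )$ are again \emph{discrete} lattices. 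So the image you are considering may well be discrete, and your stated reason for it to contain a hyperbolic element evaporates.

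The conclusion you need is nevertheless true and the repair is short: a Zariski-dense subgroup of $\SL (2,\bbr )$ is not virtually solvable, hence its image in $\PSL (2,\bbr )$ is non-elementary, and a non-elementary subgroup of $\PSL (2,\bbr )$ always contains a hyperbolic element (equivalently, a subgroup with no hyperbolic elements either has a common fixed point in $\mathfrak{H}\cup\partial\mathfrak{H}$ or is conjugate into the image of $\operatorname{SO}(2)$, and in either case is not Zariski dense). Alternatively one can simply invoke Takeuchi's theorem for (i) $\Leftrightarrow$ (ii), as the paper does. The rest of your cycle is correct and correctly deployed: the splitting $B\otimes_{\bbq }\bbr\cong\mathrm{M}(2,\bbr )\times\bbh^{[k:\bbq ]-1}$ and the compactness of the extra factors for (iii) $\Rightarrow$ (iv), the covolume comparison for (iv) $\Rightarrow$ (v), the irreducible-lattice/Borel-density argument inside (v) $\Rightarrow$ (i), and the identification of $B$ with the invariant quaternion algebra under the trace field condition for (i) $\Rightarrow$ (iii). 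These amount to reproving the pieces of the classification that the paper takes as known, which is a legitimate and more self-contained route once the one flawed sentence is fixed.
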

\begin{proof}
The equivalence (i) $\Leftrightarrow$ (ii) is the main result in \cite{MR0398991}; the other equivalences follow from the explicit classification of arithmetic lattices in $\PSL (2,\bbr )$, see e.g.\ \cite[chapter 5]{MR1177168} or \cite[chapter 8]{MR1937957}.
\end{proof}
Now we discuss congruence subgroups. For an elementary definition, let $\Gamma\subset\PSL (2,\bbr )$ be a semi-arithmetic lattice satisfying the trace field condition, and let $k$ and $\mathrsfs{O}$ be as above. Then every nonzero ideal $\mathfrak{a}$ of $\mathfrak{o}_k$ defines a subgroup
$$\tilde{\Gamma }(\mathfrak{a})=\{ \gamma\in\tilde{\Gamma }\mid\gamma-\mathbf{1}\in\mathfrak{a}\cdot\mathrsfs{O}\}$$
and its image $\Gamma (\mathfrak{a})$ in $\Gamma$, called the \emph{principal congruence subgroup} of level $\mathfrak{a}$. A \emph{congruence subgroup} of $\Gamma$ is then a subgroup containing some principal congruence subgroup. Similarly we define principal congruence subgroups $\mathrsfs{O}^1(\mathfrak{a})$ and congruence subgroups of $\mathrsfs{O}^1$.

These groups can also defined more abstractly using algebraic groups: there is a canonical linear algebraic group $H$ over $k$ with $H(k)=B^1$; we may define it functorially by setting $H(A)=(B\otimes_kA)^1$ for every $k$-algebra $A$. Then $H$ is a twisted form of $\SL (2)_k$. By Weil restriction of scalars we obtain an algebraic group $G=\operatorname{Res}_{k|\bbq }H$ with a canonical identification $G(\bbq )=H(k)=B^1$. Then $G$ is a twisted form of $\SL (2)_{\bbq }^d$ where $d=[k:\bbq ]$; in particular $G(\bbc )$ is isomorphic to $\SL (2,\bbc )^d$.

Choosing a faithful representation $G\to\GL (n)$ we can define a congruence subgroup in $G(\bbq )$ to be one that contains the preimage of a congruence subgroup of $\GL (n,\bbz )$ as a finite index subgroup. This notion of congruence subgroup is independent of the representation $G\to\GL (n)$, see \cite[Proposition 4.1]{MR2192012}; that it is equivalent to the more elementary one given before follows by taking the representation of $G\to\GL (4d)$ by left multiplication on $B$, the latter considered as a $(4d)$-dimensional $\bbq$-vector space with the lattice $\mathrsfs{O}$.

Let $\bba^f$ be the ring of finite ad\`{e}les of $\bbq$ and endow $G(\bba^f)$ with the ad\`{e}lic topology. Similarly let $\bba_k^f$ be the ring of finite ad\`{e}les of $k$, then there is a canonical isomorphism $\bba^f\otimes_{\bbq }k=\bba_k^f$ inducing $G(\bba^f)=H(\bba_k^f)$. The closure of $\mathrsfs{O}^1$ in $G(\bba^f)$ can be identified with the completion of $\mathrsfs{O}^1$ with respect to all congruence subgroups; equivalently, with the group of elements of reduced norm one in the profinite completion of $\mathrsfs{O}$. Therefore we denote it by $\widehat{\mathrsfs{O}}^1$. It is a maximal compact open subgroup of $G(\bba^f)$.

There is a canonical bijection between open subgroups of $\widehat{\mathrsfs{O}}^1$ and congruence subgroups of $\mathrsfs{O}^1$: with a congruence subgroup of $\mathrsfs{O}^1$ we associate its closure in $G(\bba^f)$, and with an open subgroup of $\widehat{\mathrsfs{O}}^1$ we associate its intersection with $\mathrsfs{O}^1$. For the proof see again \cite[Proposition 4.1]{MR2192012}.

\begin{Proposition}[Strong Approximation for Semi-Arithmetic Groups]\label{StrongApproxSemiArithm}
The closure of $\tilde{\Gamma }$ in $G(\bba^f)=H(\bba_k^f)$ is open.
\end{Proposition}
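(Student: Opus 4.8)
The plan is to deduce this directly from the strong approximation theorem for finitely generated Zariski-dense subgroups, in the form due to Matthews--Vaserstein--Weisfeiler, Nori and Weisfeiler: if $\underline{G}$ is a connected, simply connected, absolutely almost simple algebraic group over a number field $F$, and $\Delta\subseteq\underline{G}(F)$ is a finitely generated Zariski-dense subgroup, then the closure of $\Delta$ in $\underline{G}(\bba_F^f)$ is open. We would apply this with $F=k$, $\underline{G}=H$ and $\Delta=\tilde{\Gamma}$, so that the proof reduces to checking the two hypotheses on $H$ and $\tilde{\Gamma}$.

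First, $H$ is of the required type. The Dynkin diagram of type $A_1$ has no nontrivial automorphisms, so every $k$-form of $\SL(2)$ is isomorphic to $\SL_1(D)$ for some quaternion algebra $D$ over $k$; any such group is connected, absolutely almost simple and simply connected. In our situation $H=B^1=\SL_1(B)$ with $B=k[\Gamma]$ the quaternion algebra generated by $\tilde{\Gamma}$, so this applies, and by construction $\tilde{\Gamma}\subseteq\mathrsfs{O}^1\subseteq H(k)$.

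Secondly, $\tilde{\Gamma}$ is finitely generated --- it is a central extension of the finitely generated group $\Gamma$ by a group of order at most two --- and it is Zariski-dense in $H$. For the density, let $M\subseteq H$ be the Zariski closure of $\tilde{\Gamma}$, an algebraic subgroup over $k$. Base-changing along the identity embedding $k\hookrightarrow\bbr$, one sees that $M(\bbr)$ contains the real Zariski closure of $\tilde{\Gamma}$ in $\SL(2,\bbr)$. But $\Gamma$, being a lattice, is non-elementary (every elementary, i.e.\ virtually solvable, discrete subgroup of $\PSL(2,\bbr)$ has infinite covolume and so is not a lattice), hence contained in no proper real algebraic subgroup of $\PSL(2,\bbr)$; therefore $\tilde{\Gamma}$ is Zariski-dense in $\SL(2,\bbr)$. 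It follows that $M_{\bbr}=H_{\bbr}$, and since both are connected of the same dimension, $M=H$. (Alternatively one can invoke the Borel density theorem, or simply observe that $k[\Gamma]$ could not exhaust the quaternion algebra $B$ were $\tilde{\Gamma}$ contained in a Borel subgroup.)

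These verifications are routine, and the cited theorem then yields at once that the closure of $\tilde{\Gamma}$ in $H(\bba_k^f)=G(\bba^f)$ is open. (Equivalently, one may run the argument over $\bbq$ with $G=\operatorname{Res}_{k|\bbq}H$, which is semisimple, simply connected and $\bbq$-simple.) The single real obstacle is that the statement genuinely rests on the deep strong approximation theorem for thin groups; the only piece of geometry one actually has to feed in is the non-elementariness of $\Gamma$ that forces $\tilde{\Gamma}$ to be Zariski-dense --- everything else is the machinery of Weisfeiler and Nori.
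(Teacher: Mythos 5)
There is a genuine gap: the version of strong approximation you invoke is false as stated. Take $F=\bbq(\sqrt{2})$, $\underline{G}=\SL(2)$ over $F$ and $\Delta=\SL(2,\bbz)\subset\SL(2,F)$. This $\Delta$ is finitely generated and Zariski-dense in $\underline{G}$ (Zariski density is insensitive to enlarging the coefficient field), yet its closure in $\SL(2,\bba_F^f)$ is not open: at a rational prime $p$ that splits as $\mathfrak{p}_1\mathfrak{p}_2$, the closure of $\Delta$ in $\SL(2,F_{\mathfrak{p}_1})\times\SL(2,F_{\mathfrak{p}_2})\simeq\SL(2,\bbq_p)\times\SL(2,\bbq_p)$ is the diagonal copy of $\SL(2,\bbz_p)$. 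The correct theorem of Weisfeiler and Matthews--Vaserstein--Weisfeiler over a number field $F$ carries the additional hypothesis that $F$ is generated by the traces of $\operatorname{Ad}\Delta$ (equivalently, that $\Delta$ cannot be conjugated into a form of $\underline{G}$ defined over a proper subfield); you verify only finite generation and Zariski density, so your argument as written would equally "prove" the false statement above.

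That missing hypothesis is exactly where the real work sits, and it is why your parenthetical "equivalently, one may run the argument over $\bbq$ with $G=\operatorname{Res}_{k|\bbq}H$" is not an equivalence at the level of what you have checked: Zariski density of $\tilde{\Gamma}$ in $G=\operatorname{Res}_{k|\bbq}H$ means density in $G(\bbc)\simeq\SL(2,\bbc)^d$, i.e.\ simultaneous density of all $d$ Galois conjugates of $\tilde{\Gamma}$, which is strictly stronger than the density in a single factor that non-elementariness provides. The paper establishes precisely this stronger density (by adapting Geninska's argument) and then applies Nori's theorem to the $\bbq$-group $G$, where no trace-field caveat arises. Your route is repairable in the present setting: $k$ is by definition the trace field of $\Gamma$, and under the trace field condition it coincides with the adjoint trace field $\bbq(\tr^2\tilde{\Gamma})$, so the extra hypothesis does hold. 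But that is the essential input --- not the Zariski density of $\tilde{\Gamma}$ in $\SL(2,\bbr)$ --- and it must be stated and verified.
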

\begin{proof}
First we claim that $\tilde{\Gamma }$ is Zariski-dense in $G$. It suffices to show that $\tilde{\Gamma }$ is Zariski-dense in $G(\bbc )\simeq\SL (2,\bbc )^d$, and the proof of an analogous but more complicated statement over the reals \cite[Proposition 2.1 and Corollary 2.2]{MR2968231} carries over mutatis mutandis.

Then we use a special case of a result of M.\ Nori \cite[Theorem 5.4]{MR880952}, see also \cite{MR735226}: if $G$ is an algebraic group over $\bbq$ such that $G(\bbc )$ is connected and simply connected (which is the case for our $G$ since $\pi_1(\SL (2,\bbc ))=\pi_1(\operatorname{SU}(2))=\pi_1(S^3)=1$) and $\Gamma$ is a finitely generated Zariski-dense subgroup of $G(\bbq )$ contained in some arithmetic subgroup of $G$, then the closure of $\Gamma$ in $G(\bba^f)$ is open.
\end{proof}
\begin{Proposition}\label{CorollaryGammaLocallyCongruence}
There exists a nonzero ideal $\mathfrak{m}$ of $\mathfrak{o}_k$, depending on $\Gamma$, such that for every ideal $\mathfrak{a}$ of $\mathfrak{o}_k$ prime to $\mathfrak{m}$ the homomorphism
$$\tilde{\Gamma }\hookrightarrow\mathrsfs{O}^1\twoheadrightarrow\mathrsfs{O}^1/\mathrsfs{O}^1(\mathfrak{a})$$
is surjective, i.e.\ the canonical homomorphism
$$\Gamma /\Gamma (\mathfrak{a})\to\PO /\PO (\mathfrak{a})$$
is an isomorphism of finite groups.
\end{Proposition}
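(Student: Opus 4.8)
The plan is to derive the statement directly from Strong Approximation (Proposition \ref{StrongApproxSemiArithm}) by elementary manipulations inside the compact group $\widehat{\mathrsfs{O}}^1=\prod_{\mathfrak{p}}\mathrsfs{O}_{\mathfrak{p}}^1$, the product taken over all finite primes $\mathfrak{p}$ of $k$, where $\mathrsfs{O}_{\mathfrak{p}}$ is the completion of $\mathrsfs{O}$ at $\mathfrak{p}$ and $\mathrsfs{O}_{\mathfrak{p}}^1$ its group of units of reduced norm one. Since $\tilde{\Gamma}\subseteq\mathrsfs{O}^1$, the closure $\overline{\tilde{\Gamma}}$ lies in $\widehat{\mathrsfs{O}}^1$, and by Proposition \ref{StrongApproxSemiArithm} it is open in $G(\bba^f)$, hence open in $\widehat{\mathrsfs{O}}^1$. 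An open subgroup of the compact product $\prod_{\mathfrak{p}}\mathrsfs{O}_{\mathfrak{p}}^1$ contains a basic identity neighbourhood, so there is a finite set $T$ of primes of $k$ with $\overline{\tilde{\Gamma}}\supseteq\prod_{\mathfrak{p}\notin T}\mathrsfs{O}_{\mathfrak{p}}^1$ (viewing the right-hand side as the subgroup with trivial components at the primes in $T$). I then set $\mathfrak{m}=\prod_{\mathfrak{p}\in T}\mathfrak{p}$.

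Now let $\mathfrak{a}$ be any nonzero ideal of $\mathfrak{o}_k$ prime to $\mathfrak{m}$, so that no prime of $T$ divides $\mathfrak{a}$. Under the bijection recalled above between open subgroups of $\widehat{\mathrsfs{O}}^1$ and congruence subgroups of $\mathrsfs{O}^1$, the group $\mathrsfs{O}^1(\mathfrak{a})$ corresponds to $\widehat{\mathrsfs{O}}^1(\mathfrak{a}):=\prod_{\mathfrak{p}}\mathrsfs{O}_{\mathfrak{p}}^1(\mathfrak{a})$, where $\mathrsfs{O}_{\mathfrak{p}}^1(\mathfrak{a})=\{\gamma\in\mathrsfs{O}_{\mathfrak{p}}^1\mid\gamma-\mathbf{1}\in\mathfrak{a}\mathrsfs{O}_{\mathfrak{p}}\}$; here the identity $\mathrsfs{O}^1\cap\widehat{\mathrsfs{O}}^1(\mathfrak{a})=\mathrsfs{O}^1(\mathfrak{a})$, which must be checked, follows from the local--global identity $\bigcap_{\mathfrak{p}}\mathfrak{a}\mathrsfs{O}_{\mathfrak{p}}=\mathfrak{a}\mathrsfs{O}$ for the finitely generated $\mathfrak{o}_k$-module $\mathrsfs{O}$. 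Since $\mathrsfs{O}^1$ is dense in $\widehat{\mathrsfs{O}}^1$, this gives an isomorphism $\mathrsfs{O}^1/\mathrsfs{O}^1(\mathfrak{a})\cong\widehat{\mathrsfs{O}}^1/\widehat{\mathrsfs{O}}^1(\mathfrak{a})=\prod_{\mathfrak{p}\mid\mathfrak{a}}\mathrsfs{O}_{\mathfrak{p}}^1/\mathrsfs{O}_{\mathfrak{p}}^1(\mathfrak{a})$, the remaining factors being trivial. The image of $\tilde{\Gamma}$ in this finite discrete quotient equals the image of its closure $\overline{\tilde{\Gamma}}$, and because $\mathfrak{a}$ is prime to $\mathfrak{m}$ we have $\overline{\tilde{\Gamma}}\supseteq\prod_{\mathfrak{p}\mid\mathfrak{a}}\mathrsfs{O}_{\mathfrak{p}}^1$, which already surjects onto the whole quotient. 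Hence $\tilde{\Gamma}\to\mathrsfs{O}^1/\mathrsfs{O}^1(\mathfrak{a})$ is surjective.

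Finally I would pass to $\PSL$-level, i.e.\ quotient both sides by $\{\pm\mathbf{1}\}$. The only subtlety is that $-\mathbf{1}$ need not belong to $\mathrsfs{O}^1(\mathfrak{a})$; but $-\mathbf{1}\in\tilde{\Gamma}$, so $\Gamma/\Gamma(\mathfrak{a})=\tilde{\Gamma}/(\tilde{\Gamma}(\mathfrak{a})\{\pm\mathbf{1}\})$ and $\PO/\PO(\mathfrak{a})=\mathrsfs{O}^1/(\mathrsfs{O}^1(\mathfrak{a})\{\pm\mathbf{1}\})$, and the surjection from the previous paragraph descends to $\Gamma/\Gamma(\mathfrak{a})\to\PO/\PO(\mathfrak{a})$; injectivity is immediate from $\tilde{\Gamma}\cap\mathrsfs{O}^1(\mathfrak{a})=\tilde{\Gamma}(\mathfrak{a})$, since $-\mathbf{1}\in\tilde{\Gamma}$ forces $\tilde{\Gamma}\cap\mathrsfs{O}^1(\mathfrak{a})\{\pm\mathbf{1}\}=\tilde{\Gamma}(\mathfrak{a})\{\pm\mathbf{1}\}$. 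The substantive input is Proposition \ref{StrongApproxSemiArithm}; everything else is bookkeeping, and the only steps that need genuine care are the identification $\mathrsfs{O}^1\cap\widehat{\mathrsfs{O}}^1(\mathfrak{a})=\mathrsfs{O}^1(\mathfrak{a})$ and the passage through $\{\pm\mathbf{1}\}$ at the end.
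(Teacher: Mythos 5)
Your argument is correct and is essentially the paper's own proof: both deduce from Proposition \ref{StrongApproxSemiArithm} that $\overline{\tilde{\Gamma}}$ is open in $\widehat{\mathrsfs{O}}^1$, hence contains a principal congruence subgroup $\mathrsfs{O}^1(\mathfrak{m})$ (equivalently, the full local factors $\mathrsfs{O}^1_{\mathfrak{p}}$ away from a finite set of primes), and then conclude surjectivity onto $\mathrsfs{O}^1/\mathrsfs{O}^1(\mathfrak{a})$ for $\mathfrak{a}$ coprime to $\mathfrak{m}$ from the local product decomposition of the congruence quotients. The paper packages this last step as Corollary \ref{SurjectiveOnCoprime} where you work directly with the local factors, but the underlying mechanism --- strong approximation for $B^1$ plus the Chinese-remainder structure of $\widehat{\mathrsfs{O}}^1$ --- is the same.
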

The proof uses several results that will be used later on, so we mention them separately.
\begin{Theorem}[Strong Approximation for Quaternion Algebras]
$G(\bbq )=H(k)$ is dense in $G(\bba^f)=H(\bba_k^f)$.\footnote{Usually this result is phrased differently: if $\bba =\bba^f\times\bbr$ denotes the full ad\`{e}le ring, then $G(\bbq )\cdot G(\bbr )$ is dense in $G(\bba )$. But the latter is canonically isomorphic to $G(\bba^f)\times G(\bbr )$ which shows the equivalence to our formulation.}
\end{Theorem}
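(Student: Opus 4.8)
The final statement is the strong approximation theorem for the norm-one group $H=\SL_1(B)$ of the quaternion algebra $B$ over $k$; here $B$ is indefinite, being split at the distinguished real place $v_0$ of $k$ (the identity inclusion $k\subset\bbr$), because the $\bbr$-span of $\tilde{\Gamma }$ in $\mathrm{M}(2,\bbr )$ is all of $\mathrm{M}(2,\bbr )$. The plan is to obtain it as a special case, relative to the set $S_\infty$ of all archimedean places of $k$, of the strong approximation theorem for connected simply connected semisimple groups over number fields, due to Kneser and Platonov (see for instance Platonov--Rapinchuk, \emph{Algebraic Groups and Number Theory}, Theorem~7.12). Applied over $k$ with $S=S_\infty$, that theorem will give density of $H(k)$ in the $S_\infty$-adelic group $H(\bba_k^{S_\infty})=H(\bba_k^f)$; under the canonical identifications $H(k)=G(\bbq )$ and $H(\bba_k^f)=G(\bba^f)$ this is precisely the assertion, which, as the footnote explains, is in turn equivalent to the classical formulation that $G(\bbq )\cdot G(\bbr )$ is dense in $G(\bba )$.

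In detail, the theorem to be invoked states: if $H$ is a connected simply connected semisimple group over a number field $k$ and $S$ is a finite set of places such that $\prod_{v\in S}H_i(k_v)$ is non-compact for every $k$-almost-simple factor $H_i$ of $H$, then $H(k)$ is dense in $H(\bba_k^S)$. So the work left is to check the hypotheses for $H=\SL_1(B)$ and $S=S_\infty$. First, $H$ becomes $\SL (2)$ after base change to $\bar{k}$, the twist being by a cocycle valued in $\PGL (2)=\operatorname{Aut}(\SL (2))$; hence $H$ is connected, semisimple and simply connected --- this is also immediate from the observation used in the proof of Proposition~\ref{StrongApproxSemiArithm} that $G(\bbc )\cong\SL (2,\bbc )^d$ is connected and simply connected. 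Second, $H$ is absolutely almost simple, in particular $k$-almost-simple, so there is a single factor to inspect. Third, $\prod_{v\in S_\infty}H(k_v)=\prod_{v\mid\infty }H(k_v)$ contains at the place $v_0$ the factor $H(k_{v_0})\cong\SL (2,\bbr )$, which is non-compact, so the product is non-compact. All hypotheses being met, the theorem applies and yields the density.

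The genuine difficulty is confined to the strong approximation theorem itself, which I am invoking as a black box; once it is granted, the check above is routine. For the quaternion case there is also a more elementary and historically earlier route, going back to Eichler: using that $B$ splits at $v_0$ one reduces to proving $H(k)\cdot U=H(\bba_k^f)$ for every open subgroup $U$, and then one handles separately the cofinitely many finite places at which $B$ splits --- where one uses that $k$ is dense in the finite adeles of $k$ and that $\SL (2)$ of a field is generated by its two unipotent subgroups --- and the finitely many finite places at which $B$ ramifies, which are controlled via Eichler's theorem on the image of the reduced norm. Carrying that out (or, alternatively, Prasad's uniform proof through the structure theory of reductive $p$-adic groups) is where a self-contained argument would do its real work; granting the black box, nothing beyond the hypothesis check remains.
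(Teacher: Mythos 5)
Your proposal is correct and takes essentially the same route as the paper, which simply cites Platonov--Rapinchuk, Theorem 7.12, for this statement; your verification of the hypotheses (simple connectedness of $\SL_1(B)$, absolute almost simplicity, and non-compactness of $H(k_{v_0})\cong\SL (2,\bbr )$ at the split real place) is the routine check the paper leaves implicit. The remarks on Eichler's more elementary argument are a welcome addition but not part of the paper's treatment.
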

For the proof see e.g.\ \cite[Theorem 7.12]{MR1278263}.

We shall now investigate the quotient groups $\mathrsfs{O}^1/\mathrsfs{O}^1(\mathfrak{a})$. These are best understood locally: if $\mathfrak{p}$ is a finite prime of $k$, we set $\mathrsfs{O}_{\mathfrak{p}}=\mathrsfs{O}\otimes_{\mathfrak{o}_k}\mathfrak{o}_{\mathfrak{p}}$. We can then consider the group $\mathrsfs{O}_{\mathfrak{p}}^1$ of its elements of norm one, and its congruence subgroups $\mathrsfs{O}_{\mathfrak{p}}^1(\hat{\mathfrak{p}}^r)$. Recall that $\mathrsfs{O}_{\mathfrak{p}}$ is a maximal order in $B_{\mathfrak{p}}$.
\begin{Proposition}
Let $\mathfrak{a}$ be an ideal of $k$ with prime factorisation $\mathfrak{a}=\mathfrak{p}_1^{r_1}\mathfrak{p}_2^{r_2}\cdots\mathfrak{p}_n^{r_n}$. Then the canonical homomorphism
\begin{equation}\label{CongruenceQuotientPrimeDecomposition}
\mathrsfs{O}^1/\mathrsfs{O}^1(\mathfrak{a})\to\prod_{j=1}^n\mathrsfs{O}_{\mathfrak{p}_j}^1/\mathrsfs{O}_{\mathfrak{p}_j}^1(\hat{\mathfrak{p}}_j^{r_j})
\end{equation}
is an isomorphism of groups.
\end{Proposition}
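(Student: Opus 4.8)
The plan is to reduce the statement to the Chinese Remainder Theorem together with strong approximation. First I would record the elementary bookkeeping. Since $\mathfrak{a}$ is a (central) ideal of $\mathfrak{o}_k$, the subset $\mathfrak{a}\mathrsfs{O}$ is a two-sided ideal of $\mathrsfs{O}$, so reduction modulo $\mathfrak{a}\mathrsfs{O}$ is a ring homomorphism $\mathrsfs{O}\to\mathrsfs{O}/\mathfrak{a}\mathrsfs{O}$; an element $b\in\mathrsfs{O}^1$ is a unit of $\mathrsfs{O}$ (its conjugate $\mathrm{trd}(b)-b\in\mathrsfs{O}$ is a two-sided inverse), hence maps to a unit, and $\mathrsfs{O}^1/\mathrsfs{O}^1(\mathfrak{a})$ is canonically the image of $\mathrsfs{O}^1$ in $(\mathrsfs{O}/\mathfrak{a}\mathrsfs{O})^{\times}$, and likewise $\mathrsfs{O}_{\mathfrak{p}_j}^1/\mathrsfs{O}_{\mathfrak{p}_j}^1(\hat{\mathfrak{p}}_j^{r_j})$ is the image of $\mathrsfs{O}_{\mathfrak{p}_j}^1$ in $(\mathrsfs{O}_{\mathfrak{p}_j}/\hat{\mathfrak{p}}_j^{r_j}\mathrsfs{O}_{\mathfrak{p}_j})^{\times}$. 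The ideals $\mathfrak{p}_1^{r_1},\dots,\mathfrak{p}_n^{r_n}$ are pairwise comaximal in $\mathfrak{o}_k$, hence so are the two-sided ideals $\mathfrak{p}_j^{r_j}\mathrsfs{O}$ in $\mathrsfs{O}$, so the Chinese Remainder Theorem yields a ring isomorphism $\mathrsfs{O}/\mathfrak{a}\mathrsfs{O}\xrightarrow{\ \sim\ }\prod_{j=1}^n\mathrsfs{O}/\mathfrak{p}_j^{r_j}\mathrsfs{O}$; moreover localisation at $\mathfrak{p}_j$ identifies $\mathrsfs{O}/\mathfrak{p}_j^{r_j}\mathrsfs{O}$ with $\mathrsfs{O}_{\mathfrak{p}_j}/\hat{\mathfrak{p}}_j^{r_j}\mathrsfs{O}_{\mathfrak{p}_j}$ (elements of $\mathfrak{o}_k$ prime to $\mathfrak{p}_j$ act invertibly modulo a power of $\mathfrak{p}_j$). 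These identifications are compatible with the reduction maps out of $\mathrsfs{O}$ and the $\mathrsfs{O}_{\mathfrak{p}_j}$, and (\ref{CongruenceQuotientPrimeDecomposition}) is precisely the map on images of $\mathrsfs{O}^1$ induced by this ring isomorphism; in particular it is well defined, as $\mathfrak{a}\mathrsfs{O}\subseteq\hat{\mathfrak{p}}_j^{r_j}\mathrsfs{O}_{\mathfrak{p}_j}$ for each $j$.

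Injectivity is then immediate: if $b\in\mathrsfs{O}^1$ maps to $1$ in every $\mathrsfs{O}_{\mathfrak{p}_j}/\hat{\mathfrak{p}}_j^{r_j}\mathrsfs{O}_{\mathfrak{p}_j}$, then $b\equiv 1$ in $\mathrsfs{O}/\mathfrak{a}\mathrsfs{O}$ by the isomorphism just described, i.e.\ $b\in\mathrsfs{O}^1(\mathfrak{a})$, so the induced homomorphism on quotients has trivial kernel.

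The substance is surjectivity, and here I would invoke strong approximation. The group $\widehat{\mathrsfs{O}}^1$ equals the product $\prod_{\mathfrak{p}}\mathrsfs{O}_{\mathfrak{p}}^1$ over all finite primes $\mathfrak{p}$ of $k$, a compact open subgroup of $G(\bba^f)=H(\bba_k^f)$ (each factor $\mathrsfs{O}_{\mathfrak{p}}^1$ being itself compact), and since $\mathrsfs{O}=B\cap\prod_{\mathfrak{p}}\mathrsfs{O}_{\mathfrak{p}}$ we have $\mathrsfs{O}^1=B^1\cap\widehat{\mathrsfs{O}}^1=G(\bbq)\cap\widehat{\mathrsfs{O}}^1$. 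By the strong approximation theorem for quaternion algebras recalled above, $G(\bbq)$ is dense in $G(\bba^f)$; intersecting with the open subgroup $\widehat{\mathrsfs{O}}^1$ shows that $\mathrsfs{O}^1$ is dense in $\widehat{\mathrsfs{O}}^1$. Now, given an element of the right-hand side of (\ref{CongruenceQuotientPrimeDecomposition}), lift it to a tuple $(g_1,\dots,g_n)$ with $g_j\in\mathrsfs{O}_{\mathfrak{p}_j}^1$, and consider the open subset
$$U=\prod_{j=1}^n g_j\,\mathrsfs{O}_{\mathfrak{p}_j}^1(\hat{\mathfrak{p}}_j^{r_j})\times\prod_{\mathfrak{p}\notin\{\mathfrak{p}_1,\dots,\mathfrak{p}_n\}}\mathrsfs{O}_{\mathfrak{p}}^1\subseteq\widehat{\mathrsfs{O}}^1,$$
which is nonempty (it contains $(g_j)_j$). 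By density there is $b\in\mathrsfs{O}^1\cap U$; such a $b$ lies in $\mathrsfs{O}_{\mathfrak{p}}^1$ for all $\mathfrak{p}$ and reduces to $g_j$ modulo $\hat{\mathfrak{p}}_j^{r_j}\mathrsfs{O}_{\mathfrak{p}_j}$ for each $j$, so it maps to the prescribed element. Hence (\ref{CongruenceQuotientPrimeDecomposition}) is an injective and surjective group homomorphism, therefore an isomorphism.

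I do not expect a serious obstacle: the only care needed is the mild noncommutative bookkeeping — that $\mathfrak{a}\mathrsfs{O}$ is two-sided, that the Chinese Remainder Theorem applies to the ring $\mathrsfs{O}$, and that reduced norm survives reduction — all of which is routine. The one genuine input is strong approximation, but it is available as a citable theorem and applies precisely because $B$ splits at the identity embedding $k\hookrightarrow\bbr$, so that $H$ is $k$-isotropic at that place and, being simply connected, satisfies strong approximation. Alternatively one could phrase the whole argument by observing that $\mathrsfs{O}^1(\mathfrak{a})$ is the intersection with $\mathrsfs{O}^1$ of the open subgroup $\prod_j\mathrsfs{O}_{\mathfrak{p}_j}^1(\hat{\mathfrak{p}}_j^{r_j})\times\prod_{\mathfrak{p}\notin\{\mathfrak{p}_1,\dots,\mathfrak{p}_n\}}\mathrsfs{O}_{\mathfrak{p}}^1$ of $\widehat{\mathrsfs{O}}^1$ and quoting the bijection between congruence subgroups of $\mathrsfs{O}^1$ and open subgroups of $\widehat{\mathrsfs{O}}^1$ from \cite[Proposition 4.1]{MR2192012}, but the hands-on version above seems cleanest.
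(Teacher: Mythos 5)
Your proof is correct and follows essentially the same route as the paper: injectivity via the (Chinese-Remainder) identification of the quotients with units modulo the two-sided ideals, and surjectivity by using strong approximation for $B^1$ to produce a global element of reduced norm one that is integral at every finite place and approximates the prescribed local data at $\mathfrak{p}_1,\dots,\mathfrak{p}_n$. The only difference is cosmetic — you phrase the approximation step as density of $\mathrsfs{O}^1$ in the open set $U\subseteq\widehat{\mathrsfs{O}}^1$, while the paper directly asks strong approximation for an element of $H(k)$ with the two listed local properties — and you spell out the bookkeeping the paper dismisses as easy.
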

\begin{proof}
Injectivity is easy, so we only show surjectivity.

We use the description of $H(\bba_k^f)$ as the restricted direct product of the completions $B_{\mathfrak{l}}^1=(B\otimes_kk_{\mathfrak{l}})^1$, restricted with respect to the compact subgroups $\mathrsfs{O}_{\mathfrak{l}}^1$. For $j=1,\ldots ,n$ take an element $x_j\in\mathrsfs{O}_{\mathfrak{p}_j}^1$. The Strong Approximation Theorem furnishes us with an element $\beta\in H(k)= B^1$ with the following properties:
\begin{itemize}
\item For $j=1,\ldots ,n$, $\beta$ considered as an element of $B_{\mathfrak{p}_j}^1$ is congruent to $x_j$ modulo $\mathrsfs{O}_{\mathfrak{p}_j}^1(\hat{\mathfrak{p}_j}^{r_j})$ (note that the latter is an open subgroup of $B_{\mathfrak{p}_j}^1$).
\item For each finite prime $\mathfrak{l}$ different from all $\mathfrak{p}_j$'s, $\beta$ is in $\mathrsfs{O}_{\mathfrak{l}}^1$.
\end{itemize} 
Then $\beta\in\mathrsfs{O}^1$, and its class in the left hand side of (\ref{CongruenceQuotientPrimeDecomposition}) maps to $(x_1,\ldots ,x_n)$.
\end{proof}
Note that our proof also shows that the map
\begin{equation*}
\mathrsfs{O}^1/\mathrsfs{O}^1(\mathfrak{a})\to\prod_{j=1}^n\mathrsfs{O}^1/\mathrsfs{O}^1(\mathfrak{p}_j^{r_j})
\end{equation*}
is an isomorphism.
\begin{Corollary}\label{ChineseRemainderForPSL}
The canonical homomorphism
\begin{equation*}
\PO /\PO (\mathfrak{a})\to\prod_{j=1}^n\PO /\PO (\mathfrak{p}_j^{r_j})
\end{equation*}
is an epimorphism whose kernel is isomorphic to $(\bbz /2\bbz )^{d}$ for some $d<n$.
\end{Corollary}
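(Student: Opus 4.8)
The plan is to read everything off from the isomorphism $\mathrsfs{O}^1/\mathrsfs{O}^1(\mathfrak{a})\to\prod_{j=1}^n\mathrsfs{O}^1/\mathrsfs{O}^1(\mathfrak{p}_j^{r_j})$ noted just above, while keeping careful track of the central subgroup $Z=\{\pm\mathbf{1}\}\subset\mathrsfs{O}^1$. For any of the ideals $\mathfrak{b}\in\{\mathfrak{a},\mathfrak{p}_1^{r_1},\ldots,\mathfrak{p}_n^{r_n}\}$ one has, by definition of the principal congruence subgroups in $\PO=\mathrsfs{O}^1/Z$, a canonical identification $\PO/\PO(\mathfrak{b})=\mathrsfs{O}^1/\bigl(Z\cdot\mathrsfs{O}^1(\mathfrak{b})\bigr)$; moreover the image $K_{\mathfrak{b}}$ of $Z$ in $\mathrsfs{O}^1/\mathrsfs{O}^1(\mathfrak{b})$ is trivial if $-\mathbf{1}\in\mathrsfs{O}^1(\mathfrak{b})$ (equivalently, $\mathfrak{b}\mid (2)$) and of order two otherwise. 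The injectivity of the isomorphism above also gives $\mathrsfs{O}^1(\mathfrak{a})=\bigcap_{j}\mathrsfs{O}^1(\mathfrak{p}_j^{r_j})$, hence $Z\mathrsfs{O}^1(\mathfrak{a})\subseteq\bigcap_j Z\mathrsfs{O}^1(\mathfrak{p}_j^{r_j})$. Surjectivity of $\phi\colon\PO/\PO(\mathfrak{a})\to\prod_j\PO/\PO(\mathfrak{p}_j^{r_j})$ is then immediate from the commutative square whose top edge is the isomorphism above, whose side edges are the surjections $\mathrsfs{O}^1/\mathrsfs{O}^1(-)\twoheadrightarrow\PO/\PO(-)$, and whose bottom edge is $\phi$.

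For the kernel I would argue by hand. A coset $x\cdot Z\mathrsfs{O}^1(\mathfrak{a})$ lies in $\ker\phi$ exactly when $x\in Z\mathrsfs{O}^1(\mathfrak{p}_j^{r_j})$ for every $j$, so $\ker\phi=V/\bigl(Z\mathrsfs{O}^1(\mathfrak{a})\bigr)$ with $V:=\bigcap_j Z\mathrsfs{O}^1(\mathfrak{p}_j^{r_j})$. Next I would study the homomorphism $\lambda\colon V\to\prod_j K_{\mathfrak{p}_j^{r_j}}$ sending $x$ to the tuple of its residue classes: its kernel is $\bigcap_j\mathrsfs{O}^1(\mathfrak{p}_j^{r_j})=\mathrsfs{O}^1(\mathfrak{a})$, and it is surjective, because given a target tuple each entry is represented by some $z_j\in\{\pm\mathbf{1}\}$ and the isomorphism $\mathrsfs{O}^1/\mathrsfs{O}^1(\mathfrak{a})\to\prod_j\mathrsfs{O}^1/\mathrsfs{O}^1(\mathfrak{p}_j^{r_j})$ furnishes a single $x\in\mathrsfs{O}^1$ with $x\in z_j\mathrsfs{O}^1(\mathfrak{p}_j^{r_j})\subseteq Z\mathrsfs{O}^1(\mathfrak{p}_j^{r_j})$ for all $j$ at once, so that $x\in V$ and $\lambda(x)$ is the prescribed tuple. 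Since $\lambda$ carries $Z\mathrsfs{O}^1(\mathfrak{a})$ onto the diagonal copy $\Delta:=\lambda(Z)$, it induces $\ker\phi\cong\bigl(\prod_j K_{\mathfrak{p}_j^{r_j}}\bigr)/\Delta$. Writing $m\le n$ for the number of $j$ with $-\mathbf{1}\notin\mathrsfs{O}^1(\mathfrak{p}_j^{r_j})$, the product is $(\bbz/2\bbz)^m$, while $\Delta$ is trivial when $m=0$ and of order two when $m\ge 1$ (the image of $-\mathbf{1}$ has a nontrivial coordinate in exactly those $m$ slots). Choosing a complement we get $\ker\phi\cong(\bbz/2\bbz)^d$ with $d=\max(m-1,0)$, and since $n\ge 1$ this yields $d<n$ in either case.

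The one place that needs genuine care is this computation of $\ker\phi$: the three groups in the commutative square are non-abelian, so the snake lemma is unavailable and the kernel has to be pinned down by the explicit argument above. The essential input is that the prime-decomposition isomorphism for $\mathrsfs{O}^1$ lets one realise an arbitrary prescribed sign pattern at the primes $\mathfrak{p}_1,\ldots,\mathfrak{p}_n$ by a single element of $\mathrsfs{O}^1$, which is precisely what forces $\lambda$ to be surjective and thereby identifies the kernel.
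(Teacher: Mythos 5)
Your proof is correct and takes essentially the same route as the paper: both deduce the statement from the prime-decomposition isomorphism $\mathrsfs{O}^1/\mathrsfs{O}^1(\mathfrak{a})\simeq\prod_j\mathrsfs{O}^1/\mathrsfs{O}^1(\mathfrak{p}_j^{r_j})$ together with the observation that the kernels of the reductions $\mathrsfs{O}^1/\mathrsfs{O}^1(\cdot)\twoheadrightarrow\PO/\PO(\cdot)$ are trivial or of order two. The only difference is that you spell out in full the identification of $\ker\phi$ with $\bigl(\prod_jK_{\mathfrak{p}_j^{r_j}}\bigr)/\Delta\simeq(\bbz/2\bbz)^{d}$, which the paper compresses into ``the corollary follows from the remark preceding it.''
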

\begin{proof}
The homomorphism $\mathrsfs{O}^1/\mathrsfs{O}^1(\mathfrak{p}_j^{r_j})\to\PO /\PO (\mathfrak{p}_j^{r_j})$ is always surjective, and it is injective precisely when $\mathfrak{p}_j^{r_j}$ divides $(2)$, otherwise it has kernel isomorphic to $\bbz /2\bbz$. Similarly the kernel of $\mathrsfs{O}^1/\mathrsfs{O}^1(\mathfrak{a})\to\PO /\PO (\mathfrak{a})$ is either trivial or $\bbz /2\bbz$. So the corollary follows from the remark preceding it.
\end{proof}
\begin{Corollary}\label{SurjectiveOnCoprime}
Let $\Delta\subseteq\mathrsfs{O}^1$ be a congruence subgroup, containing $\mathrsfs{O}^1(\mathfrak{m})$ for some ideal $\mathfrak{m}$ of $k$. Let $\mathfrak{a}$ be an ideal of $k$ which is coprime to $\mathfrak{m}$. Then the composition
$$\Delta\hookrightarrow\mathrsfs{O}^1\twoheadrightarrow\mathrsfs{O}^1/\mathrsfs{O}^1(\mathfrak{a})$$
is surjective.
\end{Corollary}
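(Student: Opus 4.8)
The plan is to deduce this from the Chinese Remainder decomposition recorded just above, using nothing beyond the coprimality of $\mathfrak{a}$ and $\mathfrak{m}$ and the inclusion $\mathrsfs{O}^1(\mathfrak{m})\subseteq\Delta$. Concretely, I would show that every $x\in\mathrsfs{O}^1$ has the same image in $\mathrsfs{O}^1/\mathrsfs{O}^1(\mathfrak{a})$ as some element of $\Delta$.

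First I would apply the remark following the preceding proposition to the ideal $\mathfrak{a}\mathfrak{m}$: sorting the prime factors of $\mathfrak{a}\mathfrak{m}$ into those dividing $\mathfrak{a}$ and those dividing $\mathfrak{m}$ — which is exactly where the hypothesis that $\mathfrak{a}$ and $\mathfrak{m}$ are coprime enters — one obtains that the canonical map
$$\mathrsfs{O}^1/\mathrsfs{O}^1(\mathfrak{a}\mathfrak{m})\longrightarrow\mathrsfs{O}^1/\mathrsfs{O}^1(\mathfrak{a})\times\mathrsfs{O}^1/\mathrsfs{O}^1(\mathfrak{m})$$
is an isomorphism of groups.

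Next, given $x\in\mathrsfs{O}^1$, I would take the pair formed by the class of $x$ in $\mathrsfs{O}^1/\mathrsfs{O}^1(\mathfrak{a})$ and the class of $\mathbf{1}$ in $\mathrsfs{O}^1/\mathrsfs{O}^1(\mathfrak{m})$, pull it back through the above isomorphism to a class in $\mathrsfs{O}^1/\mathrsfs{O}^1(\mathfrak{a}\mathfrak{m})$, and choose a representative $y\in\mathrsfs{O}^1$. Then $y-x\in\mathfrak{a}\cdot\mathrsfs{O}$ and $y-\mathbf{1}\in\mathfrak{m}\cdot\mathrsfs{O}$. The second relation says $y\in\mathrsfs{O}^1(\mathfrak{m})\subseteq\Delta$. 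The first relation gives $yx^{-1}-\mathbf{1}=(y-x)x^{-1}\in\mathfrak{a}\cdot\mathrsfs{O}$, using $x^{-1}\in\mathrsfs{O}$ since $x\in\mathrsfs{O}^1\subseteq\mathrsfs{O}^{\times}$; hence $yx^{-1}\in\mathrsfs{O}^1(\mathfrak{a})$, i.e.\ $y$ and $x$ have the same image in $\mathrsfs{O}^1/\mathrsfs{O}^1(\mathfrak{a})$. As $x$ ranges over $\mathrsfs{O}^1$ this shows that the composition $\Delta\hookrightarrow\mathrsfs{O}^1\twoheadrightarrow\mathrsfs{O}^1/\mathrsfs{O}^1(\mathfrak{a})$ is surjective.

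I do not expect a genuine obstacle: the statement is a formal consequence of the Chinese Remainder isomorphism together with $\mathrsfs{O}^1(\mathfrak{m})\subseteq\Delta$. The only point requiring care is that one must invoke that isomorphism for the product ideal $\mathfrak{a}\mathfrak{m}$ — not for $\mathfrak{a}$ and $\mathfrak{m}$ separately — so that the two congruence conditions on $y$ can be imposed simultaneously; this is legitimate precisely because $\mathfrak{a}$ and $\mathfrak{m}$ are coprime. Alternatively, one could run the identical argument directly via the Strong Approximation Theorem used above, approximating $x$ at the primes dividing $\mathfrak{a}$ and $\mathbf{1}$ at those dividing $\mathfrak{m}$.
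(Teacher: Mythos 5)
Your argument is correct and is essentially the paper's own proof: the paper reduces the claim to the identity $\mathrsfs{O}^1(\mathfrak{m})\cdot\mathrsfs{O}^1(\mathfrak{a})=\mathrsfs{O}^1$ and derives it from the isomorphism $\mathrsfs{O}^1/(\mathrsfs{O}^1(\mathfrak{m})\cap\mathrsfs{O}^1(\mathfrak{a}))\simeq\mathrsfs{O}^1/\mathrsfs{O}^1(\mathfrak{m})\times\mathrsfs{O}^1/\mathrsfs{O}^1(\mathfrak{a})$, which is exactly your Chinese Remainder step for $\mathfrak{a}\mathfrak{m}$ (these quotients agree since $\mathrsfs{O}^1(\mathfrak{a}\mathfrak{m})=\mathrsfs{O}^1(\mathfrak{a})\cap\mathrsfs{O}^1(\mathfrak{m})$ for coprime ideals). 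You merely unpack the element-chasing that the paper leaves implicit.
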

\begin{proof}
This is equivalent to the statement $\mathrsfs{O}^1(\mathfrak{m})\cdot\mathrsfs{O}^1(\mathfrak{a})=\mathrsfs{O}^1$, and this in turn follows from the isomorphism of finite groups
\begin{equation*}\mathrsfs{O}^1/(\mathrsfs{O}^1(\mathfrak{m})\cap\mathrsfs{O}^1(\mathfrak{a}))\to\mathrsfs{O}^1/\mathrsfs{O}^1(\mathfrak{m})\times\mathrsfs{O}^1/\mathrsfs{O}^1(\mathfrak{a}).\qedhere\end{equation*}
\end{proof}
\begin{proof}[Proof of Proposition \ref{CorollaryGammaLocallyCongruence}]
By Proposition \ref{StrongApproxSemiArithm} there exists some ideal $\mathfrak{m}$ of $k$ with $\mathrsfs{O}^1(\mathfrak{m})\subseteq\overline{\tilde{\Gamma}}$, where the latter denotes the closure of $\tilde{\Gamma }$ in $\widehat{\mathrsfs{O}}^1\subset G(\bba^f)$. This does the job by Corollary \ref{SurjectiveOnCoprime}.
\end{proof}
\begin{Corollary}\label{CorChangingTheLevel}
Let $\mathfrak{a}$ and $\mathfrak{b}$ be two coprime ideals of $k$ which are both prime to $2$. Then the canonical homomorphism
\begin{equation*}
\PO (\mathfrak{a})/\PO (\mathfrak{ab})\to \PO / \PO (\mathfrak{b})
\end{equation*}
is an isomorphism.\hfill $\square$
\end{Corollary}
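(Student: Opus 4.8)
The plan is to reduce the assertion to its counterpart inside $\mathrsfs{O}^1$ and only afterwards divide out the centre $\{\pm\mathbf{1}\}$. Concretely, I would first show that the inclusion $\mathrsfs{O}^1(\mathfrak{a})\hookrightarrow\mathrsfs{O}^1$ followed by reduction modulo $\mathfrak{b}$ induces an isomorphism
\[
\mathrsfs{O}^1(\mathfrak{a})/\mathrsfs{O}^1(\mathfrak{ab})\ \xrightarrow{\ \sim\ }\ \mathrsfs{O}^1/\mathrsfs{O}^1(\mathfrak{b}),
\]
and then pass to $\PO=\mathrsfs{O}^1/\{\pm\mathbf{1}\}$.

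For the $\mathrsfs{O}^1$-statement there are two equivalent routes, both using coprimality of $\mathfrak{a}$ and $\mathfrak{b}$. The direct one: the map is well defined since $\mathrsfs{O}^1(\mathfrak{ab})\subseteq\mathrsfs{O}^1(\mathfrak{a})\cap\mathrsfs{O}^1(\mathfrak{b})$; its kernel is exactly $\mathrsfs{O}^1(\mathfrak{a})\cap\mathrsfs{O}^1(\mathfrak{b})$, and from $\mathfrak{a}+\mathfrak{b}=\mathfrak{o}_k$ one gets $\mathfrak{a}\mathrsfs{O}\cap\mathfrak{b}\mathrsfs{O}=\mathfrak{ab}\mathrsfs{O}$ (write $1=\alpha+\beta$ with $\alpha\in\mathfrak{a}$, $\beta\in\mathfrak{b}$ and split $x=\alpha x+\beta x$; equivalently, use that the order $\mathrsfs{O}$ is flat over the Dedekind ring $\mathfrak{o}_k$), so the kernel is $\mathrsfs{O}^1(\mathfrak{ab})$; and it is surjective by Corollary \ref{SurjectiveOnCoprime}, applied with $\Delta=\mathrsfs{O}^1(\mathfrak{a})$, $\mathfrak{m}=\mathfrak{a}$ and the coprime ideal $\mathfrak{b}$. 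Alternatively, the remark preceding Corollary \ref{ChineseRemainderForPSL} gives, after grouping the primes of $\mathfrak{ab}$ according to whether they divide $\mathfrak{a}$ or $\mathfrak{b}$, an isomorphism $\mathrsfs{O}^1/\mathrsfs{O}^1(\mathfrak{ab})\cong\mathrsfs{O}^1/\mathrsfs{O}^1(\mathfrak{a})\times\mathrsfs{O}^1/\mathrsfs{O}^1(\mathfrak{b})$ carrying $\mathrsfs{O}^1(\mathfrak{a})/\mathrsfs{O}^1(\mathfrak{ab})$ onto the second factor, which gives the same conclusion.

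The remaining — and genuinely delicate — step is the descent from $\mathrsfs{O}^1$ to $\PO$. This is precisely where the hypothesis that $\mathfrak{a}$ and $\mathfrak{b}$, hence also $\mathfrak{ab}$, be prime to $2$ is indispensable: it guarantees $-\mathbf{1}-\mathbf{1}=-2\mathbf{1}\notin\mathfrak{c}\mathrsfs{O}$, i.e. $-\mathbf{1}\notin\mathrsfs{O}^1(\mathfrak{c})$, for every $\mathfrak{c}\in\{\mathfrak{a},\mathfrak{b},\mathfrak{ab}\}$, so that the centre $\{\pm\mathbf{1}\}$ meets none of these congruence subgroups and the projections $\mathrsfs{O}^1(\mathfrak{c})\to\PO(\mathfrak{c})$ are isomorphisms. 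The point requiring care is to keep track of how the single central class $\pm\mathbf{1}$ sits relative to the three levels $\mathfrak{a}$, $\mathfrak{b}$, $\mathfrak{ab}$ when one divides it out of the isomorphism obtained in the previous step — the very bookkeeping that produces the factor $(\bbz/2\bbz)^{d}$ in Corollary \ref{ChineseRemainderForPSL} — and I expect this centre-counting to be the only real obstacle, everything else being formal once the $\mathrsfs{O}^1$-level isomorphism is in hand.
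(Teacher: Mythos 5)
Your reduction to the level of $\mathrsfs{O}^1$ is correct and complete: the computation $\mathfrak{a}\mathrsfs{O}\cap\mathfrak{b}\mathrsfs{O}=\mathfrak{ab}\mathrsfs{O}$ identifies the kernel, Corollary \ref{SurjectiveOnCoprime} (or the remark before Corollary \ref{ChineseRemainderForPSL}) gives surjectivity, and so $\mathrsfs{O}^1(\mathfrak{a})/\mathrsfs{O}^1(\mathfrak{ab})\to\mathrsfs{O}^1/\mathrsfs{O}^1(\mathfrak{b})$ is indeed an isomorphism. The genuine gap is exactly the step you defer to the end, and it is not a formality: the descent to $\PO$ cannot be completed, because the two sides of the asserted isomorphism have different orders. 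Your (correct) observation that $-\mathbf{1}\notin\mathrsfs{O}^1(\mathfrak{c})$ for every $\mathfrak{c}$ prime to $2$ shows that $\mathrsfs{O}^1(\mathfrak{a})\to\PO(\mathfrak{a})$ and $\mathrsfs{O}^1(\mathfrak{ab})\to\PO(\mathfrak{ab})$ are isomorphisms, and one checks that the latter subgroup corresponds to the former under this identification; hence $\PO(\mathfrak{a})/\PO(\mathfrak{ab})\cong\mathrsfs{O}^1(\mathfrak{a})/\mathrsfs{O}^1(\mathfrak{ab})\cong\mathrsfs{O}^1/\mathrsfs{O}^1(\mathfrak{b})$. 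But the target is $\PO/\PO(\mathfrak{b})\cong\bigl(\mathrsfs{O}^1/\mathrsfs{O}^1(\mathfrak{b})\bigr)/\{\pm\mathbf{1}\}$, which is smaller by a factor of $2$ for every proper $\mathfrak{b}$ prime to $2$. Concretely, the kernel of the canonical map is $\bigl(\PO(\mathfrak{a})\cap\PO(\mathfrak{b})\bigr)/\PO(\mathfrak{ab})$, and this is nontrivial: strong approximation produces $\gamma\in\mathrsfs{O}^1$ with $\gamma\equiv\mathbf{1}\bmod\mathfrak{a}\mathrsfs{O}$ and $\gamma\equiv-\mathbf{1}\bmod\mathfrak{b}\mathrsfs{O}$; its class lies in $\PO(\mathfrak{a})\cap\PO(\mathfrak{b})$ but not in $\PO(\mathfrak{ab})$, since $\pm\gamma\equiv\mathbf{1}\bmod\mathfrak{ab}\mathrsfs{O}$ would force $\mathfrak{a}\mid(2)$ or $\mathfrak{b}\mid(2)$. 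For $\mathrsfs{O}^1=\SL(2,\bbz)$, $\mathfrak{a}=(3)$, $\mathfrak{b}=(5)$ this is the observation that $\PO((3))/\PO((15))\cong\SL(2,5)$ has order $120$, while $\PO/\PO((5))\cong\PSL(2,5)$ has order $60$.

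So what your argument actually proves — and what is true — is that the canonical homomorphism is surjective with central kernel of order dividing $2$ (equal to $2$ unless $\mathfrak{a}$ or $\mathfrak{b}$ is trivial). The corollary as printed is off by this central $\bbz/2\bbz$; the paper gives no proof, and in its only application (the divisibility argument in the proof of Proposition \ref{CongruenceQuotientsGivePrimes}) the weaker statement suffices because $2\in S(\Gamma)$ in any case. In short: you located the delicate point correctly, but the centre-counting you postponed does not close the proof — it refutes the statement, and the honest conclusion is an epimorphism with kernel of order $2$, not an isomorphism.
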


\section{Congruence Quotients of Semi-Arithmetic Groups}

\noindent Our next step is to determine the quotients on the right hand side of (\ref{CongruenceQuotientPrimeDecomposition}). This is done by distinguishing between the ramified and the unramified case. To simplify notation, let $K$ be a $p$-adic field with ring of integers $\mathfrak{o}_K$ and prime ideal $\mathfrak{p}=(\pi )$. Let $q=p^f$ be the cardinality of the residue class field $\kappa =\mathfrak{o}_K/\mathfrak{p}$. Let $B$ be an unramified quaternion algebra over $K$, and let $\mathrsfs{O}\subset B$ be a maximal order. We may assume that $B=\mathrm{M}(2,K)$ and $\mathrsfs{O}=\mathrm{M}(2,\mathfrak{o}_K)$; then $\mathrsfs{O}^1=\SL (2,\mathfrak{o}_K)$ and $\mathrsfs{O}^1(\mathfrak{p})$ is the kernel of the reduction map $\SL (2,\mathfrak{o}_K)\to\SL (2,\kappa )$.
\begin{Lemma}\label{OoneModOonepIsSLTwo}
Let $r\ge 1$. The reduction map $\SL (2,\mathfrak{o}_K)\to\SL (2,\mathfrak{o}_K/\mathfrak{p}^r)$ is surjective and thus induces an isomorphism $\mathrsfs{O}^1/\mathrsfs{O}^1(\mathfrak{p}^r)\to\SL (2,\mathfrak{o}_K/\mathfrak{p}^r)$. In particular $\mathrsfs{O}^1/\mathrsfs{O}^1(\mathfrak{p})$ is isomorphic to $\SL (2,q)$.
\end{Lemma}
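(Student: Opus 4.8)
The plan is to reduce everything to the surjectivity of the reduction map on $\SL (2)$. Indeed, by definition $\mathrsfs{O}^1(\mathfrak{p}^r)=\SL (2,\mathfrak{o}_K)\cap\mathrsfs{O}^1(\mathfrak{p}^r)$ is precisely the kernel of the entrywise reduction homomorphism $\SL (2,\mathfrak{o}_K)\to\SL (2,\mathfrak{o}_K/\mathfrak{p}^r)$, so once this map is known to be surjective, the asserted isomorphism $\mathrsfs{O}^1/\mathrsfs{O}^1(\mathfrak{p}^r)\xrightarrow{\ \sim\ }\SL (2,\mathfrak{o}_K/\mathfrak{p}^r)$ is just the first isomorphism theorem; the final sentence is the special case $r=1$, where $\mathfrak{o}_K/\mathfrak{p}=\kappa$ is the field $\bbf_q$, so $\SL (2,\kappa )=\SL (2,q)$.

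For the surjectivity I would argue as follows. Since $\mathfrak{o}_K\to\mathfrak{o}_K/\mathfrak{p}^r$ is a surjective ring homomorphism, the entrywise reduction $\mathrm{M}(2,\mathfrak{o}_K)\to\mathrm{M}(2,\mathfrak{o}_K/\mathfrak{p}^r)$ is surjective. Given $\bar A\in\SL (2,\mathfrak{o}_K/\mathfrak{p}^r)$, choose any lift $A\in\mathrm{M}(2,\mathfrak{o}_K)$. Then $\det A\equiv 1\pmod{\mathfrak{p}^r}$, hence in particular $\det A\notin\mathfrak{p}$; as $\mathfrak{o}_K$ is a local ring with maximal ideal $\mathfrak{p}$, this forces $\det A\in\mathfrak{o}_K^{\times }$, and $u:=(\det A)^{-1}$ satisfies $u\equiv 1\pmod{\mathfrak{p}^r}$. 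Replacing $A$ by $A\cdot\operatorname{diag}(u,1)$ yields a matrix of determinant $1$, i.e.\ an element of $\SL (2,\mathfrak{o}_K)$, whose reduction modulo $\mathfrak{p}^r$ is still $\bar A$ because $\operatorname{diag}(u,1)\equiv\mathbf{1}\pmod{\mathfrak{p}^r}$. This proves surjectivity. (Alternatively one could invoke smoothness of $\SL (2)_{\bbz }$ together with Hensel's lemma, or the fact that $\SL (2)$ of a local ring is generated by elementary matrices, but the direct determinant-correction argument is the most self-contained.)

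There is no serious obstacle here; the only point that deserves a word of care is that the correcting factor $\operatorname{diag}(u,1)$ must reduce to the identity matrix modulo $\mathfrak{p}^r$, which is exactly guaranteed by $\det A\equiv 1\pmod{\mathfrak{p}^r}$, so that changing $A$ to $A\cdot\operatorname{diag}(u,1)$ does not disturb the chosen reduction.
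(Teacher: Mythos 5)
Your proposal is correct and follows essentially the same route as the paper: lift an arbitrary preimage in $\mathrm{M}(2,\mathfrak{o}_K)$, observe that its determinant $\delta$ lies in $1+\mathfrak{p}^r$ and is therefore a unit with inverse in $1+\mathfrak{p}^r$, and correct the lift by that inverse (the paper divides the first row by $\delta$, you multiply on the right by $\operatorname{diag}(\delta^{-1},1)$ --- the same determinant-correction idea). The identification of $\mathrsfs{O}^1(\mathfrak{p}^r)$ with the kernel of reduction and the appeal to the first isomorphism theorem likewise match the paper's (implicit) argument.
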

\begin{proof}
Let
$$\overline{\gamma }=\begin{pmatrix}\overline{a} &\overline{b} \\ \overline{c} &\overline{d}\end{pmatrix}\in\SL (2,\mathfrak{o}_K/\mathfrak{p}^r)$$
and lift $\overline{\gamma }$ arbitrarily to a matrix
$$\gamma =\begin{pmatrix} a&b\\ c&d\end{pmatrix} \in\GL (2,\mathfrak{o}_K).$$
The determinant $\delta =\det\gamma$ is an element of $1+\mathfrak{p}^r$, hence so is its inverse $\frac{1}{\delta }$. Therefore
$$\gamma ' =\begin{pmatrix}\frac{a}{\delta }&\frac{b}{\delta }\\ c&d\end{pmatrix}\in\SL (2,\mathfrak{o}_K)$$
still reduces to $\overline{\gamma }$.
\end{proof}
\begin{Lemma}\label{HigherSubquotientsSolvable}
Let $r\ge 1$. Assumptions as before, the quotient $\mathrsfs{O}^1(\mathfrak{p}^r)/\mathrsfs{O}^1(\mathfrak{p}^{r+1})$ is isomorphic to $(\bbz /p\bbz )^{3f}$.
\end{Lemma}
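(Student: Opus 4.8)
The plan is to realise the quotient directly as an additive group of matrices over the residue field $\kappa$. Recall that under our standing assumptions $\mathrsfs{O}=\mathrm{M}(2,\mathfrak{o}_K)$ and $\mathrsfs{O}^1=\SL (2,\mathfrak{o}_K)$, so $\mathrsfs{O}^1(\mathfrak{p}^r)$ consists of the $\gamma\in\SL (2,\mathfrak{o}_K)$ with $\gamma -\mathbf{1}\in\pi^r\mathrm{M}(2,\mathfrak{o}_K)$; hence each such $\gamma$ is written uniquely as $\gamma =\mathbf{1}+\pi^r M$ with $M\in\mathrm{M}(2,\mathfrak{o}_K)$. First I would introduce the map
$$\Phi\colon\mathrsfs{O}^1(\mathfrak{p}^r)\to (\mathrm{M}(2,\kappa ),+),\qquad\mathbf{1}+\pi^r M\mapsto M\bmod\mathfrak{p},$$
which is well defined since $M$ is determined by $\gamma$. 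For $\gamma =\mathbf{1}+\pi^r M$ and $\gamma '=\mathbf{1}+\pi^r M'$ one has $\gamma\gamma '=\mathbf{1}+\pi^r(M+M')+\pi^{2r}MM'$, and as $r\ge 1$ forces $2r\ge r+1$, the last term lies in $\pi^{r+1}\mathrm{M}(2,\mathfrak{o}_K)$; so $\Phi$ is a group homomorphism. (This congruence is exactly where the hypothesis $r\ge 1$ is used.) Its kernel consists of the $\mathbf{1}+\pi^r M$ with $M\equiv 0\pmod{\mathfrak{p}}$, that is, precisely $\mathrsfs{O}^1(\mathfrak{p}^{r+1})$.

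Next I would pin down the image as the space $\mathfrak{sl}(2,\kappa )=\{\bar M\in\mathrm{M}(2,\kappa )\mid\tr\bar M=0\}$ of trace-zero matrices, a $3$-dimensional $\kappa$-vector space. For the inclusion $\operatorname{im}\Phi\subseteq\mathfrak{sl}(2,\kappa )$: expanding $1=\det(\mathbf{1}+\pi^r M)=1+\pi^r\tr M+\pi^{2r}\det M$ gives $\tr M=-\pi^r\det M\in\mathfrak{p}$, so $\Phi (\gamma )$ has trace zero. For surjectivity, given $\bar M_0$ with $\tr\bar M_0=0$, I would lift it to a matrix $M_0=\bigl(\begin{smallmatrix}a&b\\ c&-a\end{smallmatrix}\bigr)\in\mathrm{M}(2,\mathfrak{o}_K)$ of trace exactly zero, and then perturb only the lower-right entry: one seeks $d\equiv -a\pmod{\mathfrak{p}}$ with $\det\bigl(\mathbf{1}+\pi^r\bigl(\begin{smallmatrix}a&b\\ c&d\end{smallmatrix}\bigr)\bigr)=1$, i.e.\ $a+d+\pi^r(ad-bc)=0$, which rearranges to $d(1+\pi^r a)=-a+\pi^r bc$. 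Since $1+\pi^r a\equiv 1\pmod{\mathfrak{p}}$ is a unit in the local ring $\mathfrak{o}_K$, this has a unique solution $d$, and $d\equiv -a\pmod{\mathfrak{p}}$ because $r\ge 1$; the resulting $\gamma =\mathbf{1}+\pi^r\bigl(\begin{smallmatrix}a&b\\ c&d\end{smallmatrix}\bigr)$ then lies in $\mathrsfs{O}^1(\mathfrak{p}^r)$ and satisfies $\Phi (\gamma )=\bar M_0$. Hence $\Phi$ induces an isomorphism $\mathrsfs{O}^1(\mathfrak{p}^r)/\mathrsfs{O}^1(\mathfrak{p}^{r+1})\xrightarrow{\ \sim\ }\mathfrak{sl}(2,\kappa )$.

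Finally, since $\mathfrak{sl}(2,\kappa )\cong\kappa^3$ as additive groups and $\kappa$ is an $\bbf_p$-vector space of dimension $f$ (as $\lvert\kappa\rvert =q=p^f$), we conclude $\mathrsfs{O}^1(\mathfrak{p}^r)/\mathrsfs{O}^1(\mathfrak{p}^{r+1})\cong (\bbz /p\bbz )^{3f}$, as claimed. The whole argument is elementary; the two points needing a little care are the congruence $2r\ge r+1$ that makes $\Phi$ multiplicative, and the correction step that produces, above each prescribed trace-zero residue, a genuine element of $\SL (2,\mathfrak{o}_K)$ rather than merely of $\mathrm{M}(2,\mathfrak{o}_K)$ — this last point is the only mildly nontrivial step.
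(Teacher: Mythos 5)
Your proof is correct and follows essentially the same route as the paper: both identify $\mathrsfs{O}^1(\mathfrak{p}^r)/\mathrsfs{O}^1(\mathfrak{p}^{r+1})$ with the additive group of trace-zero matrices in $\mathrm{M}(2,\kappa )$ via the correspondence $\mathbf{1}+\pi^rA\leftrightarrow A\bmod\mathfrak{p}$, the only cosmetic difference being that you run the map from the congruence subgroup onto $\mathfrak{sl}(2,\kappa )$ while the paper maps $(\mathrsfs{O}/\mathfrak{p}\mathrsfs{O})_0$ into $\SL (2,\mathfrak{o}_K/\mathfrak{p}^{r+1})$. Your explicit lifting argument for surjectivity (adjusting the lower-right entry to force determinant one) is a welcome elaboration of a step the paper only asserts.
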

\begin{proof}
We construct a map
\begin{equation*}
(\mathrsfs{O}/\mathfrak{p}\mathrsfs{O})_0\to \SL (2,\mathfrak{o}_K/\mathfrak{p}^{r+1}),\quad [A]\mapsto [1+\pi^rA].
\end{equation*}
Here the left hand side denotes the subgroup of those elements of $\mathrsfs{O}/\mathfrak{p}\mathrsfs{O}=M(2,\kappa )$ that have trace $\equiv 0\bmod\mathfrak{p}$. Note that $\det (1+\pi^rA)\equiv1+\pi^r\tr A\bmod\mathfrak{p}^{r+1}$, so the map is indeed well-defined. It is an injective group homomorphism, and its image is precisely the image of $\mathrsfs{O}^1(\mathfrak{p}^r)$ in $\SL (2,\mathfrak{o}_K/\mathfrak{p}^{r+1})$, which is isomorphic to $\mathrsfs{O}^1(\mathfrak{p}^r)/\mathrsfs{O}^1(\mathfrak{p}^{r+1})$.
\end{proof}
Now we turn to the ramified case. We use the explicit descritption of $B$ and $\mathrsfs{O}$ given in \cite[section 6.4]{MR1937957}. Let $L|K$ be the unique unramified quadratic extension, then $B$ is up to isomorphism given by
\begin{equation*}
B=\left\{\begin{pmatrix}a& b\\ \pi b' &a'\end{pmatrix}\middle\lvert a,b\in L\right\} ,
\end{equation*}
where $a\mapsto a'$ is the nontrivial element of $\operatorname{Gal}(L|K)$. This contains a unique maximal order,
\begin{equation*}
\mathrsfs{O}=\left\{\begin{pmatrix}a& b\\ \pi b' &a'\end{pmatrix}\middle\lvert a,b\in \mathfrak{o}_L\right\} ,
\end{equation*}
and $\mathrsfs{O}$ has a unique maximal two-sided ideal,
\begin{equation*}
\mathrsfs{M}=\begin{pmatrix}0&1\\ \pi & 0\end{pmatrix}\mathrsfs{O}=\left\{\begin{pmatrix}\pi a& b\\ \pi b' &\pi a'\end{pmatrix}\middle\lvert a,b\in\mathfrak{o}_L\right\} .
\end{equation*}
It satisfies $\mathrsfs{M}^2=\mathfrak{p}\mathrsfs{O}$. We define congruence subgroups $\mathrsfs{O}^1(\mathrsfs{M}^r)=\mathrsfs{O}^1\cap (1+\mathrsfs{M}^r)$, so that $\mathrsfs{O}^1(\mathfrak{p}^r)=\mathrsfs{O}^1(\mathrsfs{M}^{2r})$.
\begin{Lemma}
The quotient $\mathrsfs{O}^1/\mathrsfs{O}^1(\mathrsfs{M})$ is a cyclic group of order $q+1$.
\end{Lemma}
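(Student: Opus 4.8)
The plan is to realise $\mathrsfs{O}^1/\mathrsfs{O}^1(\mathrsfs{M})$ as a subgroup of the multiplicative group of the residue field of $L$ and then to pin down exactly which subgroup it is. First I would analyse the residue ring $\mathrsfs{O}/\mathrsfs{M}$: from the explicit descriptions of $\mathrsfs{O}$ and $\mathrsfs{M}$ one checks directly that $\begin{pmatrix}a&b\\\pi b'&a'\end{pmatrix}\mapsto(a\bmod\pi\mathfrak{o}_L)$ is a surjective ring homomorphism $\mathrsfs{O}\to\mathfrak{o}_L/\pi\mathfrak{o}_L$ with kernel precisely $\mathrsfs{M}$; since $L|K$ is unramified, $\mathfrak{o}_L/\pi\mathfrak{o}_L$ is the residue field $\kappa'$ of $L$, which has $q^2$ elements. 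In particular $\mathrsfs{O}$ is a (noncommutative) local ring with maximal ideal $\mathrsfs{M}$, so $\mathrsfs{O}^{\times}=\mathrsfs{O}\smallsetminus\mathrsfs{M}$, and reduction yields a surjection $\mathrsfs{O}^{\times}\twoheadrightarrow(\kappa')^{\times}$ with kernel $1+\mathrsfs{M}$. Restricting to elements of reduced norm one, this kernel becomes $\mathrsfs{O}^1\cap(1+\mathrsfs{M})=\mathrsfs{O}^1(\mathrsfs{M})$, so $\mathrsfs{O}^1/\mathrsfs{O}^1(\mathrsfs{M})$ embeds into the cyclic group $(\kappa')^{\times}$; this already gives cyclicity, and it remains to compute the order.

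For the upper bound I would observe that the reduced norm $\mathrsfs{O}\to\mathfrak{o}_K$, which on $\begin{pmatrix}a&b\\\pi b'&a'\end{pmatrix}$ equals $N_{L|K}(a)-\pi N_{L|K}(b)$, carries $\mathrsfs{M}$ into $\mathfrak{p}$ and hence descends to a multiplicative map $(\kappa')^{\times}\to\kappa^{\times}$. Evaluating this descended map on the images of the diagonal elements $\operatorname{diag}(a,a')$ identifies it with the field norm $x\mapsto x^{q+1}$ of $\kappa'|\kappa$. Since every element of $\mathrsfs{O}^1$ has reduced norm $1$, its image in $(\kappa')^{\times}$ lies in the kernel of this norm, which is the unique subgroup of order $q+1$. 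Thus $\#(\mathrsfs{O}^1/\mathrsfs{O}^1(\mathrsfs{M}))$ divides $q+1$.

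For the matching lower bound I would use the embedding $L\hookrightarrow B$, $a\mapsto\operatorname{diag}(a,a')$, which sends $\mathfrak{o}_L$ into $\mathrsfs{O}$ and the norm-one units of $\mathfrak{o}_L$ into $\mathrsfs{O}^1$. Because $q+1\equiv1\pmod p$, the polynomial $X^{q+1}-1$ is separable over $\kappa'$, so each $(q+1)$-st root of unity $\zeta\in(\kappa')^{\times}$ lifts by Hensel's lemma to a genuine $(q+1)$-st root of unity $z\in\mathfrak{o}_L$; since $L|K$ is unramified, the nontrivial element of $\Gal(L|K)$ sends $z$ to $z^q$, whence $N_{L|K}(z)=z^{q+1}=1$. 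Therefore $\operatorname{diag}(z,z^q)\in\mathrsfs{O}^1$ reduces to $\zeta$, so the image of $\mathrsfs{O}^1$ in $(\kappa')^{\times}$ contains all $q+1$ of these roots of unity. Combined with the previous paragraph, $\mathrsfs{O}^1/\mathrsfs{O}^1(\mathrsfs{M})$ is the cyclic group of order $q+1$.

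The only genuinely delicate step is this last surjectivity: producing, for each norm-one residue class, an honest norm-one unit of $\mathrsfs{O}$ reducing to it. The Teichm\"{u}ller-lift argument makes this painless precisely because $p\nmid q+1$; in its absence one would instead have to invoke surjectivity of $N_{L|K}$ on the principal unit groups of an unramified extension together with a successive-approximation argument. Everything else is routine bookkeeping with the matrix descriptions of $\mathrsfs{O}$ and $\mathrsfs{M}$.
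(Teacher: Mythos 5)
Your proof is correct and follows essentially the same route as the paper: reduce modulo $\mathrsfs{M}$ to embed $\mathrsfs{O}^1/\mathrsfs{O}^1(\mathrsfs{M})$ into $\lambda^{\times}$ with $\lambda=\mathfrak{o}_L/\pi\mathfrak{o}_L$, and identify the image with the kernel of the residue norm $N_{\lambda|\kappa}$, which has order $(q^2-1)/(q-1)=q+1$. The only difference is that you supply an explicit Teichm\"{u}ller-lift argument for surjectivity onto that kernel, a step the paper dismisses as ``easily seen''.
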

\begin{proof}
Since $L|K$ is unramified, the quotient $\lambda =\mathfrak{o}_L/\pi\mathfrak{o}_L$ is a finite field of order $q^2$. We construct a map
\begin{equation*}
\mathrsfs{O}^1/\mathrsfs{O}^1(\mathrsfs{M})\to\lambda^{\times },\quad \left[\begin{pmatrix} a&b\\ \pi b'& a'\end{pmatrix}\right] \mapsto a\bmod\pi .
\end{equation*}
This is easily seen to be an injective group homomorphism whose image is the kernel of the norm map $N_{\lambda\mid\kappa }$. That norm map is surjective to $\kappa^{\times }$, so its kernel has order $(q^2-1)/(q-1)=q+1$.
\end{proof}
\begin{Lemma}
Let $r\ge 1$. Then $\mathrsfs{O}^1(\mathrsfs{M}^r)/\mathrsfs{O}^1(\mathrsfs{M}^{r+1})$ is isomorphic to the additive group of $\kappa$.
\end{Lemma}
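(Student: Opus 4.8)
The plan is to imitate the two preceding lemmas and realise $\mathrsfs{O}^1(\mathrsfs{M}^r)/\mathrsfs{O}^1(\mathrsfs{M}^{r+1})$ as a subgroup of $\mathrsfs{M}^r/\mathrsfs{M}^{r+1}$ cut out by the reduced norm. For $x,y\in\mathrsfs{M}^r$ one has $(1+x)(1+y)=1+(x+y)+xy$ with $xy\in\mathrsfs{M}^{2r}\subseteq\mathrsfs{M}^{r+1}$ since $r\ge 1$, so
$$\psi\colon\mathrsfs{O}^1(\mathrsfs{M}^r)\longrightarrow\mathrsfs{M}^r/\mathrsfs{M}^{r+1},\qquad 1+x\longmapsto x+\mathrsfs{M}^{r+1},$$
is a group homomorphism, and its kernel is visibly $\mathrsfs{O}^1(\mathrsfs{M}^{r+1})$; hence $\mathrsfs{O}^1(\mathrsfs{M}^r)/\mathrsfs{O}^1(\mathrsfs{M}^{r+1})$ is identified with $\operatorname{im}\psi$. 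Since $\mathrsfs{M}=\Pi\mathrsfs{O}$ for $\Pi=\left(\begin{smallmatrix}0&1\\\pi&0\end{smallmatrix}\right)$ and $\Pi^2=\pi\mathbf 1$, left multiplication by $\Pi^r$ induces an isomorphism of abelian groups $\mathrsfs{O}/\mathrsfs{M}\xrightarrow{\ \sim\ }\mathrsfs{M}^r/\mathrsfs{M}^{r+1}$; as $\mathrsfs{O}/\mathrsfs{M}\cong\lambda$ has order $q^2$, what has to be shown is that $\operatorname{im}\psi$ has index $q$.

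To pin down $\operatorname{im}\psi$ I would use $\operatorname{Nrd}(1+x)=1+\operatorname{Trd}(x)+\operatorname{Nrd}(x)$, so that $1+x\in\mathrsfs{O}^1$ precisely when $\operatorname{Trd}(x)=-\operatorname{Nrd}(x)$. From the explicit matrices $\operatorname{Trd}\!\left(\begin{smallmatrix}a&b\\\pi b'&a'\end{smallmatrix}\right)=\operatorname{Tr}_{L|K}(a)$, which is onto $\mathfrak{o}_K$ because $L|K$ is unramified, whence $\operatorname{Trd}(\mathrsfs{M}^s)=\mathfrak{p}^{\lceil s/2\rceil}$ for all $s\ge 0$ (use $\mathrsfs{M}^{2j}=\pi^j\mathrsfs{O}$, $\mathrsfs{M}^{2j+1}=\pi^j\mathrsfs{M}$). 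On the other hand, writing $x=\Pi^r y$ with $y\in\mathrsfs{O}$ gives $\operatorname{Nrd}(x)=(-\pi)^r\operatorname{Nrd}(y)\in\mathfrak{p}^r$, and $\mathfrak{p}^r\subseteq\mathfrak{p}^{\lceil(r+1)/2\rceil}$ for $r\ge 1$. So the reduced trace descends to a homomorphism $\overline{\operatorname{Trd}}\colon\mathrsfs{M}^r/\mathrsfs{M}^{r+1}\to\mathfrak{p}^{\lceil r/2\rceil}/\mathfrak{p}^{\lceil(r+1)/2\rceil}$, the norm-one condition puts $\operatorname{im}\psi\subseteq\ker\overline{\operatorname{Trd}}$, and conversely a successive-approximation argument — solving $\operatorname{Trd}(x)=-\operatorname{Nrd}(x)$ one $\pi$-adic order at a time, the corrections landing in $\mathrsfs{M}^{r+1}$ — shows every class of $\ker\overline{\operatorname{Trd}}$ is hit. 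Thus $\mathrsfs{O}^1(\mathrsfs{M}^r)/\mathrsfs{O}^1(\mathrsfs{M}^{r+1})\cong\ker\overline{\operatorname{Trd}}$.

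The remaining task is to compute $\ker\overline{\operatorname{Trd}}$, and this is the step I expect to be the real obstacle, because the parity of $r$ enters. For $r=2j$ even, dividing by $\pi^j$ turns $\overline{\operatorname{Trd}}$ into the reduction $\operatorname{Tr}_{\lambda|\kappa}\colon\lambda\to\kappa$ of $\operatorname{Tr}_{L|K}$, a surjective $\kappa$-linear functional whose kernel is a $\kappa$-line of order $q$, and the lemma follows immediately. For $r$ odd the target $\mathfrak{p}^{\lceil r/2\rceil}/\mathfrak{p}^{\lceil(r+1)/2\rceil}$ collapses, the leading-order norm-one condition is vacuous, and one must extract the \emph{next} term — equivalently, determine $\operatorname{Nrd}(1+\mathrsfs{M}^r)$ exactly and read off the constraint it imposes on $x$ modulo $\mathrsfs{M}^{r+1}$. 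Making this finer expansion precise, tracking how the reduced norm and the Galois conjugation $a\mapsto a'$ interact across successive powers of $\mathrsfs{M}$ (this is where the arithmetic of the ramified algebra genuinely enters), is the heart of the matter; granting it, the identification $\mathrsfs{O}^1(\mathrsfs{M}^r)/\mathrsfs{O}^1(\mathrsfs{M}^{r+1})\cong(\kappa,+)$ drops out.
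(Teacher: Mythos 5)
Your even-$r$ argument is complete and is, up to a change of coordinates, exactly the paper's: writing $x=\pi^j y$ and projecting to the diagonal entry of $y$ modulo $\pi$ recovers the paper's map $[\,\cdot\,]\mapsto (a-1)/\pi^j\bmod\pi$ with image $\ker\tr_{\lambda|\kappa}$, which is your $\ker\overline{\operatorname{Trd}}$. The problem is the odd case, and the gap you flagged there is not merely unfinished --- it cannot be closed, because for odd $r$ the statement is false, and your own leading-order computation is already telling you why. Take $r=1$: an element of $1+\mathrsfs{M}$ has the form $1+\left(\begin{smallmatrix}\pi a& b\\ \pi b'&\pi a'\end{smallmatrix}\right)$ with $a,b\in\mathfrak{o}_L$, and the norm-one condition reads $\tr_{L|K}(a)+\pi N_{L|K}(a)=N_{L|K}(b)$. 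Since $L|K$ is unramified, $\tr_{L|K}\colon\mathfrak{o}_L\to\mathfrak{o}_K$ is surjective, so for \emph{every} $b$ this can be solved for $a$ by precisely the successive approximation you describe --- and the corrections are diagonal, hence lie in $\mathrsfs{M}^{2}$ and do not move the class of $x$ modulo $\mathrsfs{M}^{2}$. So $\operatorname{im}\psi$ is all of $\mathrsfs{M}/\mathrsfs{M}^{2}\cong\lambda$, of order $q^2$, and there is no ``next term'' cutting it down to index $q$. The same happens for every odd $r$: the correct statement is $\mathrsfs{O}^1(\mathrsfs{M}^r)/\mathrsfs{O}^1(\mathrsfs{M}^{r+1})\cong(\lambda,+)$ for $r$ odd and $\cong\ker(\tr_{\lambda|\kappa})\cong(\kappa ,+)$ for $r$ even.

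Two cross-checks. First, $\lvert\mathrsfs{O}^1/\mathrsfs{O}^1(\mathfrak{p})\rvert=\lvert(\mathrsfs{O}/\mathfrak{p}\mathrsfs{O})^{\times}\rvert/\lvert(\mathfrak{o}_K/\mathfrak{p})^{\times}\rvert=q^2(q^2-1)/(q-1)=q^2(q+1)$; since $\mathrsfs{O}^1/\mathrsfs{O}^1(\mathrsfs{M})$ has order $q+1$, the piece $\mathrsfs{O}^1(\mathrsfs{M})/\mathrsfs{O}^1(\mathrsfs{M}^2)$ must have order $q^2$. Second, $\mathrsfs{O}^1$ is a three-dimensional $p$-adic analytic group, so $[\mathrsfs{O}^1(\mathrsfs{M}^2):\mathrsfs{O}^1(\mathrsfs{M}^{2r})]$ must grow like $q^{3(r-1)}$, forcing each pair of consecutive graded pieces to have total order $q^3=q\cdot q^2$, not $q^2$. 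Be aware that the paper's own proof shares the defect: it asserts that the image of $[\,\cdot\,]\mapsto b/\pi^{r-1}\bmod\pi$ is $\ker\tr_{\lambda|\kappa}$, whereas the norm-one condition only gives $N_{\lambda|\kappa}(\bar b)=\tr_{\lambda|\kappa}(\bar a)$, which constrains $\bar b$ not at all. Fortunately only the facts that these quotients are elementary abelian $p$-groups and which primes divide their orders are used later (in the summarising Corollary and in Proposition \ref{CongruenceQuotientsGivePrimes}), so nothing downstream is affected; but as a proof of the lemma as stated, your approach --- like the paper's --- fails in the odd case because the claim itself is wrong there.
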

\begin{proof}
We construct injective group homomorphisms
\begin{equation*}
\mathrsfs{O}^1(\mathrsfs{M}^{2r})/\mathrsfs{O}^1(\mathrsfs{M}^{2r+1})\to\lambda ,\quad \left[\begin{pmatrix} a&b\\ \pi b'& a'\end{pmatrix}\right] \mapsto \frac{a-1}{\pi^r }\bmod\pi
\end{equation*}
and
\begin{equation*}
\mathrsfs{O}^1(\mathrsfs{M}^{2r-1})/\mathrsfs{O}^1(\mathrsfs{M}^{2r})\to\lambda ,\quad \left[\begin{pmatrix} a&b\\ \pi b'& a'\end{pmatrix}\right] \mapsto \frac{b}{\pi^{r-1}}\bmod\pi .
\end{equation*}
The image is in both cases the kernel of the trace map $\tr_{\lambda\mid\kappa }$.
\end{proof}
We summarise the results, reformulated for number fields:
\begin{Corollary}
Let $k$ be a number field and $B$ a quaternion algebra over $k$, unramified over at least one infinite place of $k$. Let $\mathrsfs{O}\subset B$ be a maximal order and let $\mathfrak{p}$ be a prime of $k$ of norm $q=p^f$. Let $r\ge 1$ and $H=\mathrsfs{O}^1/\mathrsfs{O}^1(\mathfrak{p}^r)$.
\begin{enumerate}
\item If $B$ is ramified at $\mathfrak{p}$, then $H$ is solvable; the prime numbers appearing as orders in its composition series are $p$ and the prime divisors of $q+1$.
\item If $B$ is unramified at $\mathfrak{p}$ and $\mathfrak{p}\nmid 6$, then $H$ is not solvable. Its composition factors are: once $\bbz /2\bbz$, once $\PSL (2,q)$ and $3f(r-1)$ times $\bbz /p\bbz$.
\end{enumerate}
\end{Corollary}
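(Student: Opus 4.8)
The plan is to localise at $\mathfrak{p}$ and then read off a composition series of $H$ from the local lemmas just proved. Taking $\mathfrak{a}=\mathfrak{p}^r$ (so that $n=1$) in the isomorphism (\ref{CongruenceQuotientPrimeDecomposition}) identifies $H=\mathrsfs{O}^1/\mathrsfs{O}^1(\mathfrak{p}^r)$ with $\mathrsfs{O}_{\mathfrak{p}}^1/\mathrsfs{O}_{\mathfrak{p}}^1(\hat{\mathfrak{p}}^r)$, so it suffices to treat the local situation over $K=k_{\mathfrak{p}}$, with residue field $\kappa$ of cardinality $q=p^f$. Here $B_{\mathfrak{p}}\cong\mathrm{M}(2,K)$ in case (ii) and $B_{\mathfrak{p}}$ is the quaternion division algebra over $K$ in case (i), and in each case $\mathrsfs{O}_{\mathfrak{p}}$ is the maximal order described above; these are exactly the two settings of the preceding lemmas.

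For (ii) I would use Lemma \ref{OoneModOonepIsSLTwo} to identify $H$ with $\SL(2,\mathfrak{o}_K/\mathfrak{p}^r)$ and filter it by the images of the subgroups $\mathrsfs{O}^1(\mathfrak{p}^j)$, $0\le j\le r$. Each of the $r-1$ upper layers $\mathrsfs{O}^1(\mathfrak{p}^j)/\mathrsfs{O}^1(\mathfrak{p}^{j+1})$ with $1\le j\le r-1$ is isomorphic to $(\bbz/p\bbz)^{3f}$ by Lemma \ref{HigherSubquotientsSolvable}, contributing $3f(r-1)$ copies of $\bbz/p\bbz$ in total, while the bottom quotient $\mathrsfs{O}^1/\mathrsfs{O}^1(\mathfrak{p})\cong\SL(2,q)$ has exactly two composition factors: its centre $\{\pm\mathbf{1}\}\cong\bbz/2\bbz$ (of order two because $\mathfrak{p}\nmid 2$ makes $q$ odd) and the quotient $\PSL(2,q)$ (simple because $\mathfrak{p}\nmid 3$ makes $q>3$). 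Concatenating these composition series yields the list asserted in (ii), and the nonabelian simple factor $\PSL(2,q)$ shows that $H$ is not solvable.

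For (i) I would instead work with the maximal two-sided ideal $\mathrsfs{M}$ of $\mathrsfs{O}_{\mathfrak{p}}$, which satisfies $\mathrsfs{M}^2=\mathfrak{p}\mathrsfs{O}_{\mathfrak{p}}$, so that $\mathrsfs{O}^1(\mathfrak{p}^r)=\mathrsfs{O}^1(\mathrsfs{M}^{2r})$ and $H$ carries the filtration by the $\mathrsfs{O}^1(\mathrsfs{M}^j)$ for $0\le j\le 2r$. By the two lemmas above on the ramified local case, the top quotient $\mathrsfs{O}^1/\mathrsfs{O}^1(\mathrsfs{M})$ is cyclic of order $q+1$, and each of the remaining $2r-1$ quotients $\mathrsfs{O}^1(\mathrsfs{M}^j)/\mathrsfs{O}^1(\mathrsfs{M}^{j+1})$ with $1\le j\le 2r-1$ is isomorphic to the additive group of $\kappa$, hence to $(\bbz/p\bbz)^f$. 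Stacking these composition series shows that $H$ is solvable and that the primes occurring in its composition series are exactly the prime divisors of $q+1$ (from the top quotient) together with $p$, which really occurs since $r\ge 1$ guarantees at least one $\mathrsfs{M}$-layer.

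Since everything is reduced to lemmas already established in this section, I do not expect a genuine obstacle; the only points demanding care are invoking the hypothesis $\mathfrak{p}\nmid 6$ at exactly the places where it is needed — to know that $\SL(2,q)\to\PSL(2,q)$ has kernel of order exactly two and that $\PSL(2,q)$ is simple — and keeping the $\mathrsfs{M}$-adic indexing straight, so that the $2r-1$ abelian layers are counted correctly and every prime asserted in (i) genuinely appears.
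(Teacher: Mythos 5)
Your proposal is correct and follows exactly the route the paper intends: the corollary is stated there as a summary of the preceding local lemmas, and your argument assembles them in the expected way (localisation via the decomposition isomorphism, then the congruence filtration in the unramified case and the $\mathrsfs{M}$-adic filtration in the ramified case). The bookkeeping of the $3f(r-1)$ abelian layers, the two composition factors of $\SL(2,q)$, and the $2r-1$ residue-field layers is all accurate, as is your identification of where $\mathfrak{p}\nmid 6$ is used.
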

In case (ii) for $\mathfrak{p}\mid 6$ we have to replace $\PSL (2,q)$, which is not necessarily simple then, by its composition factors.

\section{Characters for Fuchsian Groups}

\noindent In this section we prove a criterion for two isomorphic lattices in $\PSL (2,\bbr )$ being conjugate:

\begin{Theorem}\label{CullerShalenForPSLTwoR}
Let $\Gamma$ be a group, and for $j=1,2$ let $\varrho_j\colon\Gamma\to\PSL (2,\bbr )$ be an injective group homomorphism such that $\varrho_j(\Gamma )$ is a lattice. Let $\Delta\subseteq\Gamma$ be a finite index subgroup, and assume that
\begin{equation}
\tr^2\varrho_1(\gamma )=\tr^2\varrho_2(\gamma )\text{ for all }\gamma\in\Delta .
\end{equation}
Then there exists a unique $a\in\PGL (2,\bbr )$ such that $\varrho _2(\gamma )=a\varrho_1(\gamma )a^{-1}$ for all $\gamma\in\Gamma $.
\end{Theorem}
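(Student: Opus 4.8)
The plan is to turn the hypothesis on squared traces into a statement about ordinary traces of the adjoint representation, invoke the classical fact that an irreducible complex representation is determined up to conjugacy by its character, and then descend the conjugacy so obtained first from $\GL (3,\bbc )$ to $\PGL (2,\bbr )$ and finally from $\Delta$ to all of $\Gamma$. Two preliminary reductions are harmless. First, replacing $\Delta$ by its normal core $\bigcap_{\gamma\in\Gamma }\gamma\Delta\gamma^{-1}$ — still of finite index, since $\Gamma$, being isomorphic to a lattice, is finitely generated — I may assume $\Delta$ is normal in $\Gamma$, the trace identity surviving on the smaller group. Second, since $\varrho_j(\Gamma )$ and hence its finite-index subgroup $\varrho_j(\Delta )$ are lattices in $\PSL (2,\bbr )$, they are non-elementary, hence not virtually solvable, hence Zariski-dense in $\PGL_2$; this density will be used repeatedly.

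Compose now with the adjoint representation $\operatorname{Ad}\colon\PSL (2,\bbr )\hookrightarrow\operatorname{SO}(\kappa )(\bbr )\subset\GL (3,\bbr )$, where $\kappa$ is the Killing form of $\mathfrak{sl}_2(\bbr )$. Recall that $\operatorname{Ad}$ is injective (its kernel on $\SL (2)$ is the centre) and that $\tr\operatorname{Ad}(g)=\tr^2 g-1$, a quantity that genuinely lives on $\PSL (2)$ and is precisely the one the hypothesis controls. Put $\sigma_j=\operatorname{Ad}\circ\varrho_j$. Since $\operatorname{Ad}(\PGL_2)$ acts irreducibly on $\mathfrak{sl}_2(\bbc )\cong\bbc^3$ and $\varrho_j(\Delta )$ is Zariski-dense, $\sigma_j|_{\Delta }$ is irreducible over $\bbc$, and by assumption $\tr\sigma_1(\gamma )=\tr\sigma_2(\gamma )$ for all $\gamma\in\Delta$. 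Hence $\sigma_1|_{\Delta }$ and $\sigma_2|_{\Delta }$ are conjugate by some $C\in\GL (3,\bbc )$.

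The heart of the matter is to identify $C$. Both $\sigma_j|_{\Delta }$ preserve $\kappa$, and as $\sigma_1|_{\Delta }$ is irreducible its invariant bilinear form is unique up to scalar; therefore $C$ is a similitude of $\kappa$. For a ternary form the group of similitudes modulo scalars is $\operatorname{SO}(\kappa )$, which under $\operatorname{Ad}$ is identified with $\PGL (2,\bbc )$; using once more the injectivity of $\operatorname{Ad}$, the class of $C$ equals $\operatorname{Ad}(c)$ for a unique $c\in\PGL (2,\bbc )$, and $\varrho_2(\gamma )=c\varrho_1(\gamma )c^{-1}$ in $\PGL (2,\bbc )$ for every $\gamma\in\Delta$. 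Applying complex conjugation — which fixes each $\varrho_j(\gamma )\in\PSL (2,\bbr )$ — to this identity shows that $\bar{c}^{-1}c$ centralises the Zariski-dense set $\varrho_1(\Delta )$, hence is central in $\PGL (2,\bbc )$, hence trivial; so $c$ is $\Gal (\bbc /\bbr )$-invariant, i.e.\ $c\in\PGL (2,\bbr )$ by Hilbert 90. Set $a:=c$.

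Finally, to pass from $\Delta$ to $\Gamma$, replace $\varrho_1$ by $\gamma\mapsto a\varrho_1(\gamma )a^{-1}$, which is again injective with lattice image and agrees with $\varrho_2$ on $\Delta$. Then for every $\gamma\in\Gamma$ and $\delta\in\Delta$, using $\gamma\delta\gamma^{-1}\in\Delta$,
\[
\varrho_1(\gamma )\varrho_2(\delta )\varrho_1(\gamma )^{-1}=\varrho_1(\gamma\delta\gamma^{-1})=\varrho_2(\gamma\delta\gamma^{-1})=\varrho_2(\gamma )\varrho_2(\delta )\varrho_2(\gamma )^{-1},
\]
so $\varrho_2(\gamma )^{-1}\varrho_1(\gamma )$ centralises the Zariski-dense group $\varrho_2(\Delta )$ and is therefore trivial; thus $\varrho_1=\varrho_2$ on $\Gamma$, which for the original $\varrho_1$ reads $\varrho_2(\gamma )=a\varrho_1(\gamma )a^{-1}$ for all $\gamma\in\Gamma$. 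Uniqueness is the same density argument once more: if $a'\in\PGL (2,\bbr )$ also conjugates $\varrho_1$ to $\varrho_2$, then $(a')^{-1}a$ centralises the Zariski-dense $\varrho_1(\Gamma )$, whence $a'=a$. I expect the genuine obstacle to lie in the third paragraph: keeping careful track of which group each intermediate conjugation belongs to, and carrying out the descent from $\GL (3,\bbc )$ to $\PGL (2,\bbr )$. (Alternatively one could lift $\varrho_j|_{\Delta }$ to $\SL (2,\bbr )$, after shrinking $\Delta$ so that its images are torsion-free, and apply Culler--Shalen's \cite[Proposition 1.5.2]{MR683804} directly; the sign ambiguity one then has to absorb — a homomorphism $\Delta\to\{\pm 1\}$ — is precisely what the passage to $\operatorname{Ad}$ avoids.)
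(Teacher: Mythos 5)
Your argument is correct, but it takes a genuinely different route from the paper's. The paper stays in rank two: it first arranges for $\Delta$ to be torsion-free (Selberg's Lemma), lifts $\varrho_j|_{\Delta}$ to $\SL (2,\bbr )$ by the Sepp\"al\"a--Sorvali lifting theorem, and then kills the resulting sign ambiguity $\varepsilon\colon\Delta\to\{\pm 1\}$ by an induction on word length using the identity $\tr (AB)+\tr (AB^{-1})=\tr A\cdot\tr B$ together with the absence of elliptic elements; only then does it invoke Culler--Shalen for $\SL (2,\bbc )$, descend to $\GL (2,\bbr )$ via Skolem--Noether, and finally extend from $\Delta$ to $\Gamma$ by choosing hyperbolic generators and using that a hyperbolic element is determined by any of its powers. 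Your passage to $\operatorname{Ad}$, with $\tr\operatorname{Ad}(g)=\tr^2g-1$, sidesteps the entire lifting-and-sign discussion --- exactly as your closing parenthesis observes --- at the price of importing the exceptional isomorphism $\PGL_2\simeq\operatorname{SO}_3$, character rigidity for irreducible representations of arbitrary groups in characteristic zero, and Galois descent via Hilbert 90 in place of the paper's Skolem--Noether lemma. Your endgame (normal core plus triviality of the centraliser of a Zariski-dense subgroup) is also cleaner than the paper's hyperbolic-generators argument and yields the uniqueness of $a$ by the same token, which the paper asserts but does not spell out. Two places deserve one extra line each, though neither is a gap: in the similitude step you should note that $\kappa$ has odd rank, so that every similitude factor is a square and $\operatorname{GO}(\kappa )$ is generated by scalars and $\operatorname{O}(\kappa )$, giving $\operatorname{GO}(\kappa )/\text{scalars}\simeq\operatorname{SO}(\kappa )$; and you should state explicitly which form of character rigidity you use (two \emph{irreducible} complex representations of an arbitrary group with equal traces are conjugate --- both of yours are irreducible by Zariski density, so this classical fact applies).
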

The proof of Theorem \ref{CullerShalenForPSLTwoR} rests on the following result, see \cite[Proposition 1.5.2]{MR683804}, as well as on subsequent elementary lemmas.
\begin{Theorem}[Culler--Shalen]\label{CullerShalenOriginal}
Let $\varrho_1 ,\varrho_2\colon\Gamma\to\SL (2,\bbc )$ be two representations such that
\begin{equation}
\tr\varrho_1(\gamma )=\tr\varrho_2(\gamma )\text{ for every }\gamma\in\Gamma ,
\end{equation}
and assume that $\varrho_1$ is irreducible. Then there exists $a\in\SL (2,\bbc )$, unique up to sign, such that $\varrho_2(\gamma )=a\varrho_1(\gamma )a^{-1}$ for every $\gamma\in\Gamma$.
\end{Theorem}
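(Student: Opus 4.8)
The plan is to pass to the group algebra and reduce the statement to a rigidity statement about surjections onto the central simple algebra $\mathrm{M}(2,\bbc )$, where the Skolem--Noether theorem will supply the conjugating element. First I would extend $\varrho_1,\varrho_2$ $\bbc$-linearly to algebra homomorphisms $\tilde\varrho_1,\tilde\varrho_2\colon\bbc [\Gamma ]\to\mathrm{M}(2,\bbc )$, and introduce the linear functional $\chi =\tr\circ\tilde\varrho_1=\tr\circ\tilde\varrho_2$; the two descriptions of $\chi$ agree because the hypothesis $\tr\varrho_1(\gamma )=\tr\varrho_2(\gamma )$ holds on the spanning set $\Gamma$ and both sides are linear. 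Since $\varrho_1$ is irreducible, Burnside's theorem shows that the image of $\tilde\varrho_1$ is all of $\mathrm{M}(2,\bbc )$, so I may choose $\gamma_1,\dots ,\gamma_4\in\Gamma$ with $M_i:=\varrho_1(\gamma_i)$ a $\bbc$-basis of $\mathrm{M}(2,\bbc )$.

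Next I would show that $\varrho_2$ is automatically irreducible as well. The trace form $\langle X,Y\rangle =\tr (XY)$ on $\mathrm{M}(2,\bbc )$ is symmetric and nondegenerate, so the Gram matrix $\big(\langle M_i,M_j\rangle\big)=\big(\chi (\gamma_i\gamma_j)\big)$ is invertible. But $\chi (\gamma_i\gamma_j)=\tr\big(\varrho_2(\gamma_i)\varrho_2(\gamma_j)\big)$ is equally the Gram matrix of the vectors $N_i:=\varrho_2(\gamma_i)$, so the $N_i$ are linearly independent and hence also form a basis of $\mathrm{M}(2,\bbc )$; thus $\tilde\varrho_2$ is surjective too.

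The heart of the argument is that the two kernels coincide. For $x\in\bbc [\Gamma ]$, nondegeneracy of the trace form gives $\tilde\varrho_1(x)=0$ iff $\tr\big(\tilde\varrho_1(x)Z\big)=0$ for all $Z\in\mathrm{M}(2,\bbc )$, and surjectivity of $\tilde\varrho_1$ lets me take $Z=\tilde\varrho_1(y)$, so that $x\in\ker\tilde\varrho_1$ iff $\chi (xy)=0$ for all $y\in\bbc [\Gamma ]$, using $\tr (\tilde\varrho_1(x)\tilde\varrho_1(y))=\chi (xy)$. Running the identical computation for $\tilde\varrho_2$ — legitimate precisely because I have just shown $\tilde\varrho_2$ to be surjective — yields $x\in\ker\tilde\varrho_2$ iff $\chi (xy)=0$ for all $y$. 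As $\chi$ is the same functional in both cases, $\ker\tilde\varrho_1=\ker\tilde\varrho_2$. Consequently $\tilde\varrho_1$ and $\tilde\varrho_2$ induce the same quotient of $\bbc [\Gamma ]$, and the rule $\Phi (\varrho_1(\gamma ))=\varrho_2(\gamma )$ extends to a well-defined $\bbc$-algebra automorphism $\Phi$ of $\mathrm{M}(2,\bbc )$.

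It remains to invoke structure theory. By the Skolem--Noether theorem every $\bbc$-algebra automorphism of the central simple algebra $\mathrm{M}(2,\bbc )$ is inner, so $\Phi (X)=bXb^{-1}$ for some $b\in\GL (2,\bbc )$, whence $\varrho_2(\gamma )=b\varrho_1(\gamma )b^{-1}$ for all $\gamma$. Rescaling $b$ by a square root of $1/\det b$ produces $a\in\SL (2,\bbc )$ with the same conjugation action. For uniqueness, if $a'$ also conjugates $\varrho_1$ into $\varrho_2$, then $a^{-1}a'$ centralises $\varrho_1(\Gamma )$, hence the algebra it generates, which is all of $\mathrm{M}(2,\bbc )$; so $a^{-1}a'$ is a scalar lying in $\SL (2,\bbc )$, i.e.\ $\pm\mathbf{1}$, giving $a'=\pm a$. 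The only genuinely delicate point is the kernel equality, which hinges on having nondegeneracy of the trace form available for \emph{both} representations; this is exactly why the irreducibility (equivalently surjectivity) of $\varrho_2$ must be extracted from the trace identity before the main computation.
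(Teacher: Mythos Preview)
Your argument is correct. The passage through the group algebra, Burnside's theorem to get surjectivity of $\tilde\varrho_1$, the Gram-matrix trick to force surjectivity of $\tilde\varrho_2$, the kernel identification via nondegeneracy of the trace form, and the final appeal to Skolem--Noether all go through as written; the uniqueness clause is also handled cleanly.

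As for comparison: the paper does not prove this statement at all. It is quoted as a black box from Culler and Shalen \cite[Proposition 1.5.2]{MR683804}, so there is no ``paper's own proof'' to compare against. Your write-up is essentially the standard proof (and is in the spirit of the original Culler--Shalen argument), so it would serve perfectly well as a self-contained justification were one desired here.
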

\begin{Lemma}
Let $g\in\PSL (2,\bbr )$ and let $\Sigma\subset\PSL (2,\bbr )$ be a group generated by two hyperbolic elements without common fixed points. Then there exists $s\in \Sigma$ with $sg$ hyperbolic.
\end{Lemma}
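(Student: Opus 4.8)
The plan is to reduce everything to a single trace estimate. Fix a hyperbolic element $h\in\PSL (2,\bbr )$ with attracting fixed point $\xi^{+}$ and repelling fixed point $\xi^{-}$ on $\bbp^{1}(\bbr )$, and let $g\in\PSL (2,\bbr )$ be arbitrary. After conjugating the whole configuration by an element of $\PGL (2,\bbr )$ we may assume that a lift of $h$ to $\SL (2,\bbr )$ is $\operatorname{diag}(\lambda ,\lambda^{-1})$ with $\lambda >1$, so that $\xi^{+}=[1:0]$ and $\xi^{-}=[0:1]$; writing a lift of $g$ as $\bigl(\begin{smallmatrix}a&b\\c&d\end{smallmatrix}\bigr)$, one finds that a lift of $h^{n}g$ has trace $\lambda^{n}a+\lambda^{-n}d$, hence $\tr^{2}(h^{n}g)=(\lambda^{n}a+\lambda^{-n}d)^{2}$. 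Since $a=0$ is equivalent to $g(\xi^{+})=\xi^{-}$ and $d=0$ is equivalent to $g(\xi^{-})=\xi^{+}$, the first thing I would record is: \emph{if $g$ does not interchange the two fixed points of $h$, then $h^{n}g$ is hyperbolic for every $n$ of sufficiently large absolute value} (large positive if $a\neq 0$, large negative if $d\neq 0$), because then $\tr^{2}(h^{n}g)\to\infty$ and in particular eventually exceeds $4$. The second thing I would record is the elementary equivalence that $g$ interchanges the two fixed points of a hyperbolic $h$ if and only if $ghg^{-1}=h^{-1}$: in that case $g$ maps the axis of $h$ to itself reversing its orientation but preserving its translation length, and conversely such a $g$ cannot fix either endpoint.

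With these in hand, write $\Sigma =\langle h_{1},h_{2}\rangle$ with $h_{1},h_{2}$ hyperbolic and $\operatorname{Fix}(h_{1})\cap\operatorname{Fix}(h_{2})=\emptyset$. If $g$ does not interchange the two fixed points of $h_{1}$, then $s=h_{1}^{n}$ does the job for suitable $n$ by the first observation, and similarly with $h_{2}$. Hence the only case left to treat is that $g$ interchanges the fixed points of both generators, i.e., $gh_{1}g^{-1}=h_{1}^{-1}$ and $gh_{2}g^{-1}=h_{2}^{-1}$.

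In this remaining case I would produce a fresh hyperbolic element of $\Sigma$ on which to run the first observation. Since $\operatorname{Fix}(h_{1})\neq\operatorname{Fix}(h_{2})$, the element $h_{2}$ cannot interchange the two fixed points of $h_{1}$: otherwise $h_{2}^{2}$ would commute with $h_{1}$, and two commuting hyperbolic elements of $\PSL (2,\bbr )$ have the same fixed-point set, so we would get $\operatorname{Fix}(h_{1})=\operatorname{Fix}(h_{2})$. Therefore, in coordinates diagonalising $h_{1}$, the entries $a,d$ of $h_{2}$ are not both zero, and the trace formula above shows $h_{3}:=h_{1}^{k}h_{2}$ is hyperbolic once $|k|$ is large enough, with the sign of $k$ dictated by which of $a,d$ is nonzero; fix such a $k$. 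Finally, $g$ does not interchange the fixed points of $h_{3}$: if it did, we would have $gh_{3}g^{-1}=h_{3}^{-1}$, whereas computing directly gives $gh_{3}g^{-1}=(gh_{1}g^{-1})^{k}(gh_{2}g^{-1})=h_{1}^{-k}h_{2}^{-1}$ and $h_{3}^{-1}=h_{2}^{-1}h_{1}^{-k}$, forcing $h_{1}^{k}$ and $h_{2}$ to commute and hence $\operatorname{Fix}(h_{1})=\operatorname{Fix}(h_{2})$, a contradiction. Applying the first observation to $h_{3}$ and $g$ then yields some $s=h_{3}^{n}\in\Sigma$ with $h_{3}^{n}g$ hyperbolic.

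The only genuine obstacle is exactly this last case. Geometrically it is the situation where $g$ is the half-turn about the point at which the axes of $h_{1}$ and $h_{2}$ cross, so that no power of $h_{1}$ or of $h_{2}$ alone can help; the resolution exploits that $\Sigma$ is non-elementary enough to contain a hyperbolic element ($h_{1}^{k}h_{2}$ for suitable $k$) whose axis avoids that point, and the commuting-elements argument isolates this with no need to analyse the relative position of the two axes.
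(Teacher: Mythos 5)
Your proof is correct, and it takes a genuinely different (more geometric) route than the paper's. The paper argues purely with traces: it first shows by contradiction that some $S\in\tilde{\Sigma}$ has $\tr (SG)\neq 0$ (if all such traces vanished, then computing $\tr (S_1G)$, $\tr (G)$, $\tr (S_2G)$, $\tr (S_1S_2G)$ in coordinates diagonalising $S_1$ forces the impossible conclusion $a=d=0$ and $cx=by=0$), and then, having reduced to $\tr G\neq 0$, it applies the identity $\tr (AB)+\tr (AB^{-1})=\tr (A)\tr (B)$ with an arbitrary hyperbolic $T\in\tilde{\Sigma}$ to make $\lvert\tr (T^{\pm N}G)\rvert$ large. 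You instead run the power trick first, observing that $\tr (h^ng)=\lambda^na+\lambda^{-n}d$ blows up unless $a=d=0$, and you identify the obstruction intrinsically: $h^ng$ stays non-hyperbolic for all $n$ exactly when $g$ swaps the fixed points of $h$, equivalently $ghg^{-1}=h^{-1}$. Your degenerate case ($g$ inverting both generators) is then dispatched by manufacturing the hyperbolic element $h_1^kh_2$, which $g$ cannot invert by the commutation argument, whereas the paper's degenerate case (all traces $\tr (SG)$ vanishing) is dispatched by the matrix computation with $S_1$, $S_2$, $S_1S_2$. The two arguments are clearly cousins --- both exploit that powers of a hyperbolic element force traces to grow, and both ultimately use the two generators together with a product of them --- but yours isolates the precise geometric obstruction (an elliptic involution swapping the endpoints of the axis) and yields the sharper intermediate statement that $h^ng$ works for a single hyperbolic $h$ unless $ghg^{-1}=h^{-1}$, while the paper's trace-identity route avoids any case analysis on fixed points in its second half. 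All the individual steps you rely on (the equivalence of fixed-point swapping with $ghg^{-1}=h^{-1}$, the fact that commuting hyperbolic elements share their fixed-point set, and the hyperbolicity of $h_1^kh_2$ for suitable $k$) check out.
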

\begin{proof}
Lift $g$ to an element $G\in\SL (2,\bbr )$. First we will show that there exists some $S\in\tilde{\Sigma }$ with $\tr (SG)\neq 0$.

Assume, on the contrary, that $\tr (SG)=0$ for all $S\in\tilde{\Sigma }$. Choose two hyperbolic elements $S_1,S_2\in\tilde{\Sigma}$ without common fixed points; without loss of generality we may assume that
$$S_1=\begin{pmatrix}\lambda & 0\\ 0&\lambda^{-1}\end{pmatrix},\quad S_2=\begin{pmatrix}w&x\\ y&z\end{pmatrix},\quad G=\begin{pmatrix}a&b\\ c&d\end{pmatrix}$$
for some $\lambda >1$ and $xy\neq 0$. Then
$$\lambda a+\lambda^{-1}d=\tr (S_1G)=0=\tr (G)=a+d,$$
hence $a=d=0$ and
$$G=\begin{pmatrix}0 & b\\ c & 0\end{pmatrix},\quad bc=-\det (G)=-1,\text{ so }b,c\neq 0.$$
But then
$$cx+by=\tr (S_2G)=0=\tr (S_1S_2G)=\lambda cx+\lambda^{-1}by,$$
hence $cx=by=0$; but we know that $b,c,x,y\neq 0$, contradiction.

So there exists some $S\in\tilde{\Sigma }$ with $\tr (SG)\neq 0$; without loss of generality we assume that already $\tr G\neq 0$. Take some arbitrary hyperbolic $T\in\tilde{\Sigma}$; by the elementary equation
\begin{equation}\label{TraceRelationAB}
\tr (AB)+\tr (AB^{-1})=\tr (A)\cdot \tr (B)\text{ for all }A,B\in\SL (2,\bbc )
\end{equation}
then
$$\lvert\tr (T^NG)\rvert +\lvert\tr (T^{-N}G)\rvert\ge\lvert\tr (T^NG)+\tr (T^{-N}G)\rvert =\lvert\tr (T^N)\tr (G)\rvert .$$
But the right hand side goes to $\infty$ as $N\to\infty$, so for sufficiently large $N$, at least one of $\lvert\tr (T^NG)\rvert$ and $\lvert\tr (T^{-N}G)\rvert$ must be larger than $2$.
\end{proof}
\begin{Lemma}\label{FuchsiansAreGeneratedByHyberbolics}
Let $\Gamma\subset\PSL (2,\bbr )$ be a lattice. Then there exists a finite generating system of $\Gamma$ only consisting of hyperbolic elements.
\end{Lemma}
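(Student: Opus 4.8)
The plan is to start from an arbitrary finite generating system of $\Gamma$ and to replace each generator by its product with a suitable element of a fixed ``ping-pong'' subgroup, thereby making every generator hyperbolic without changing the group generated.

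Concretely, a lattice $\Gamma\subset\PSL (2,\bbr )$ is finitely generated --- a finite-covolume Fuchsian group has a finite-sided fundamental polygon, hence a finite presentation --- so fix generators $\gamma_1,\ldots ,\gamma_n$ of $\Gamma$. Being a lattice, $\Gamma$ is non-elementary, and a non-elementary Fuchsian group contains two hyperbolic elements $h_1,h_2$ with no common fixed point on $\bbp^1(\bbr )$ (standard; one may for instance take $h_1=h$ and $h_2=\delta h\delta^{-1}$ for a hyperbolic $h\in\Gamma$ and a $\delta\in\Gamma$ for which $h$ and $\delta h\delta^{-1}$ share no fixed point, such a $\delta$ existing because the stabiliser in $\Gamma$ of a point of $\bbp^1(\bbr )$ has infinite index). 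Put $\Sigma =\langle h_1,h_2\rangle\subseteq\Gamma$; this is precisely the kind of subgroup to which the preceding lemma applies.

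For each $i$, that lemma supplies some $s_i\in\Sigma$ with $s_i\gamma_i$ hyperbolic. I claim the finite set $\{ h_1,h_2,s_1\gamma_1,\ldots ,s_n\gamma_n\}\subseteq\Gamma$ is a generating system consisting of hyperbolic elements: every member is hyperbolic by construction, and the subgroup it generates contains $h_1$ and $h_2$, hence all of $\Sigma$, hence every $s_i^{-1}$, hence every $\gamma_i=s_i^{-1}(s_i\gamma_i)$; since the $\gamma_i$ generate $\Gamma$, so does the new set.

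The only delicate point --- and the only real obstacle --- is the existence of the pair $h_1,h_2$ inside $\Gamma$, i.e.\ knowing that $\Gamma$ is non-elementary and contains hyperbolic elements with distinct fixed-point sets. This is classical Fuchsian-group theory (a group all of whose nontrivial elements are elliptic or parabolic is elementary, and elementary groups are not lattices); alternatively one may invoke the Tits alternative to obtain a rank-two free subgroup of $\Gamma$ generated by hyperbolic elements outright.
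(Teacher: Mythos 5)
Your proof is correct and follows exactly the paper's own argument: fix finite generators, produce two hyperbolic elements of $\Gamma$ without common fixed points (the paper cites \cite[Exercise 2.13]{MR1177168} for this), apply the preceding lemma to adjust each generator by an element of the group they generate, and observe that the enlarged set still generates. The only cosmetic difference is that you sketch the existence of the hyperbolic pair rather than citing it.
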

\begin{proof}
Assume that $\Gamma$ is generated by $g_1,\ldots ,g_n$. By \cite[Exercise 2.13]{MR1177168}, $\Gamma$ contains two hyperbolic elements $h_1,h_2$ without common fixed points; let them generate the group $S$. For each $1\le j\le n$ choose some $s_j\in S$ with $s_jg_j$ hyperbolic. Then $\Gamma$ is generated by the hyperbolic elements $h_1,h_2,s_1g_1,\ldots ,s_ng_n$.
\end{proof}
\begin{Lemma}\label{ReductionToRealCaseConjugation}
Let $a\in\SL (2,\bbc )$ and let $\Gamma\subset\SL (2,\bbr )$ be a lattice with $a\Gamma a^{-1}\subset\SL (2,\bbr )$. Then $a\in\bbc^{\times }\cdot\GL (2,\bbr )$.
\end{Lemma}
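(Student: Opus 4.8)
The plan is to exploit complex conjugation. Write $\overline{a}$ for the entrywise complex conjugate of $a$; since complex conjugation is a field automorphism of $\bbc$, the map $x\mapsto\overline{x}$ on matrices is a ring automorphism of $\mathrm{M}(2,\bbc)$ fixing $\mathrm{M}(2,\bbr)$ pointwise and preserving determinants. For $\gamma\in\Gamma\subset\SL(2,\bbr)$ we have $\gamma=\overline{\gamma}$ and, by hypothesis, $a\gamma a^{-1}\in\SL(2,\bbr)$, so $a\gamma a^{-1}=\overline{a\gamma a^{-1}}=\overline{a}\,\gamma\,\overline{a}^{-1}$. Rearranging, the matrix $b=\overline{a}^{-1}a\in\SL(2,\bbc)$ commutes with every element of $\Gamma$.

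Next I would identify the centraliser of $\Gamma$ in $\GL(2,\bbc)$. A Fuchsian lattice contains two hyperbolic elements $h_1,h_2$ with no common fixed point on $\bbp^1(\bbr)$ (this is the input to Lemma \ref{FuchsiansAreGeneratedByHyberbolics}). Each $h_i$ is diagonalisable over $\bbr$ with two distinct eigenvalues, so its centraliser in $\GL(2,\bbc)$ consists exactly of the matrices sharing its two eigenlines; since $h_1$ and $h_2$ share no eigenline, the only matrices commuting with both are the scalars $\bbc^{\times}\cdot\mathbf{1}$. (Equivalently, $\Gamma$ is Zariski-dense in $\SL(2,\bbc)$, whose centraliser in $\GL(2,\bbc)$ is its centre.) Hence $b\in\bbc^{\times}\cdot\mathbf{1}$, and since $\det b=\overline{\det a}^{-1}\det a=1$ we get $b=\pm\mathbf{1}$, i.e.\ $\overline{a}=\pm a$.

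It remains to conclude in the two cases. If $\overline{a}=a$ then $a\in\GL(2,\bbr)$ and there is nothing to prove. If $\overline{a}=-a$ then $\overline{ia}=-i\overline{a}=ia$, so $ia$ has real entries; since $\det(ia)=-\det a=-1\neq 0$, we have $ia\in\GL(2,\bbr)$ and therefore $a=(-i)(ia)\in\bbc^{\times}\cdot\GL(2,\bbr)$.

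There is no real obstacle here: the argument is short, and the only computations are the determinant bookkeeping and the two-line case analysis at the end. The one point worth stating carefully is the centraliser computation, which is where the hypothesis that $\Gamma$ is a lattice — and not, say, a reducible or abelian subgroup — is actually used.
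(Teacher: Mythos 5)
Your proof is correct, and it takes a genuinely different route from the paper's. The paper argues via Zariski density of $\Gamma$ in $\SL (2,\bbr )$ to get $a\,\mathrm{M}(2,\bbr )\,a^{-1}=\mathrm{M}(2,\bbr )$ and then invokes the Skolem--Noether theorem to realise the conjugation as an inner automorphism by some $b\in\GL (2,\bbr )$, concluding that $ba^{-1}$ is central in $\mathrm{M}(2,\bbc )$. You instead run a Galois-descent argument: since $a\gamma a^{-1}$ is real for all $\gamma\in\Gamma$, the element $b=\overline{a}^{-1}a$ centralises $\Gamma$, and the centraliser of $\Gamma$ in $\GL (2,\bbc )$ is scalar because $\Gamma$ contains two hyperbolic elements with disjoint fixed-point sets (the same input the paper uses in Lemma \ref{FuchsiansAreGeneratedByHyberbolics}); the determinant computation then forces $\overline{a}=\pm a$, and the two cases give $a\in\GL (2,\bbr )$ or $ia\in\GL (2,\bbr )$. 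Your version is more elementary and self-contained --- it replaces Skolem--Noether by a two-line eigenline argument and even identifies the scalar explicitly as $1$ or $-i$ --- while the paper's version is the one that would generalise directly to conjugation problems for other central simple algebras. Both proofs ultimately rest on the same nondegeneracy of $\Gamma$ (irreducibility, equivalently Zariski density), and you correctly flag that this is where the lattice hypothesis enters.
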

\begin{proof}
Since $\Gamma$ is Zariski-dense in $\SL (2,\bbr )$ we may deduce that $a\SL (2,\bbr )a^{-1}\subseteq\SL (2,\bbr )$. The sub-$\bbr$-vector space of $\mathrm{M}(2,\bbc )$ generated by $\SL (2,\bbr )$ is $\mathrm{M}(2,\bbr )$, so $a\mathrm{M}(2,\bbr )a^{-1}=\mathrm{M}(2,\bbr )$. By the Skolem--Noether Theorem, the automorphism $g\mapsto aga^{-1}$ of $\mathrm{M}(2,\bbr )$ has to be an inner automorphism, i.e.\ there exists $b\in\GL (2,\bbr )$ with $aga^{-1}=bgb^{-1}$ for all $g\in\mathrm{M}(2,\bbr )$ and hence, by linear extension, also for all $g\in\mathrm{M} (2,\bbc )$. But this means that $ba^{-1}$ is in the centre of $\mathrm{M}(2,\bbc )$ which is $\bbc^{\times }$.
\end{proof}
\begin{proof}[Proof of Theorem \ref{CullerShalenForPSLTwoR}]
Without loss of generality we may assume that $\Delta$ is torsion-free by Selberg's Lemma \cite[Lemma 8]{MR0130324}, hence it has a presentation
$$\Delta =\langle g_1,\ldots ,g_m\mid [g_1,g_{n+1}][g_2,g_{n+2}]\cdots [g_n,g_{2n}]=1\rangle \text{ with }m=2n$$
(in the cocompact case), or is free on some generators $g_1,\ldots ,g_m$ (otherwise).
 By \cite[Theorem 4.1]{MR1124819} each $\varrho_j|_{\Delta}$ can be lifted to representations $\tilde{\varrho }_j\colon\Delta\to\SL (2,\bbr )$; furthermore again by that theorem we can arbitrarily prescribe the sign of each lift of $\varrho_j(g_i)$, so we may assume that
\begin{equation}\label{SameTracesForGenerators}
\tr \tilde{\varrho }_1(g_i)=\tr\tilde{\varrho }_2(g_i)\text{ for all }1\le i\le m.
\end{equation}
More generally,
$$\tr \tilde{\varrho }_1(\gamma )=\varepsilon (\gamma )\cdot\tr \tilde{\varrho }_2(\gamma )\text{ for all }\gamma\in\Delta ,$$
where $\varepsilon$ is some function $\Delta\to\{\pm 1\}$. Note that $\varepsilon$ is uniquely determined by this equation because the traces cannot be zero since elements of $\varrho_j(\Delta )$ are not elliptic. Furthermore $\varepsilon (g_i)=1$ for every generator $g_i$ by (\ref{SameTracesForGenerators}).

We now show that $\varepsilon$ is identically $1$. The crucial step is the following implication:
\begin{equation}\label{InductionStepForEpsilon}
\text{If }\varepsilon (\gamma )=\varepsilon (\delta )=1,\text{ then }\varepsilon (\gamma\delta )=\varepsilon (\gamma\delta^{-1} )=1.
\end{equation}
So assume that $\varepsilon (\gamma )=\varepsilon (\delta )=1$. We deduce from (\ref{TraceRelationAB}):
\begin{equation}\label{TraceEquationForXandY}
\begin{gathered}
\varepsilon (\gamma\delta )\tr \tilde{\varrho }_1(\gamma\delta ) +\varepsilon (\gamma\delta^{-1})\tr \tilde{\varrho }_1(\gamma\delta^{-1})=\tr\tilde{\varrho }_2(\gamma\delta )+\tr\tilde{\varrho }_2(\gamma\delta^{-1})\\
=(\tr\tilde{\varrho }_2(\gamma))\cdot (\tr\tilde{\varrho }_2(\delta ))=(\tr\tilde{\varrho }_1(\gamma))\cdot (\tr\tilde{\varrho }_1(\delta ))=\tr\tilde{\varrho }_1(\gamma\delta )+\tr\tilde{\varrho }_1(\gamma\delta^{-1}).
\end{gathered}
\end{equation}
If $\varepsilon (\gamma\delta )$ and $\varepsilon (\gamma\delta^{-1})$ were both negative, (\ref{TraceEquationForXandY}) would entail that $(\tr\tilde{\varrho }_2(\gamma ))\cdot (\tr\tilde{\varrho }_2(\delta ))=0$ which is absurd because $\Delta$ does not contain elliptic elements. If $\varepsilon (\gamma\delta )=1$ and $\varepsilon (\gamma\delta^{-1})=-1$, then $\tr\tilde{\varrho }_2(\gamma\delta^{-1})=0$ which is again absurd; the other mixed case is ruled out in an analogous way. This proves (\ref{InductionStepForEpsilon}).

Now we can prove that $\varepsilon (\gamma )=1$ for every $\gamma\in\Delta$ using induction on the word length $\ell (\gamma )$: this is the number of factors $g_j^{\pm 1}$ needed to obtain $\gamma$ as a product. If $\ell (\gamma )=1$ then $\gamma =g_j^{\pm 1}$; since $\varepsilon (\gamma )=\varepsilon (\gamma^{-1})$, this must be equal to $\varepsilon (g_j)=1$. If $\varepsilon (\gamma )=1$ for all $\gamma$ with $\ell (\gamma )\le n$ we may use (\ref{InductionStepForEpsilon}) and the trivial identity $\varepsilon (\gamma^{-1})=\varepsilon (\gamma )$ to show the statement for all $\gamma$ with $\ell (\gamma )\le n+1$. Therefore by induction, $\varepsilon$ is identically $1$, hence
\begin{equation*}
\tr\tilde{\varrho }_1(\gamma )=\tr\tilde{\varrho }_2(\gamma )\text{ for all }\gamma\in\Delta .
\end{equation*}
By Theorem \ref{CullerShalenOriginal} this means that $\tilde{\varrho }_1$ is conjugate to $\tilde{\varrho }_2$ within $\SL (2,\bbc )$, but since all images are real, the conjugation must be possible within $\GL (2,\bbr )$ by Lemma \ref{ReductionToRealCaseConjugation}. This in turn means that $\varrho_1|_{\Delta }$ and $\varrho_2|_{\Delta }$ are conjugate in $\PGL (2,\bbr )$.

We need to extend this to the entire group $\Gamma$. Without loss of generality we may assume that $\varrho_1|_{\Delta }=\varrho_2|_{\Delta }$. By Lemma \ref{FuchsiansAreGeneratedByHyberbolics} there exists a generating system $\gamma_1,\ldots ,\gamma_m$ of $\Gamma$, not necessarily related in any way to that of $\Delta$, such that all $\varrho_1(\gamma_j)$ are hyperbolic. But some power of each $\gamma_j$ is contained in $\Delta$, and hence $\varrho_1(\gamma_j)^N=\varrho_2(\gamma_j)^N$. Under the assumptions on $\gamma_j$ this entails $\varrho_1(\gamma_j)=\varrho_2(\gamma_j)$, i.e.\ $\varrho_1=\varrho_2$.
\end{proof}

\section{Modular Embeddings}\label{SectionModularEmbedding}

\noindent Let once again $\Gamma\subset\PSL (2,\bbr )$ be a semi-arithmetic lattice satisfying the trace field property, with trace field $k$, quaternion algebra $B$, maximal order $\mathrsfs{O}$ and algebraic group $G=\operatorname{Res}_{k|\bbq }H$. As explained above, $\Gamma$ is a subgroup of the arithmetic group $\PO$. Now that latter group naturally lives on the symmetric space of $G$, i.e.\ on $G(\bbr )/K$ for a maximal compact subgroup $K$. This space can be described explicitly as $\mathfrak{H}^r$ where $\mathfrak{H}$ is the upper half-plane and $r\le d=[k:\bbq ]$. Let $\sigma_1,\ldots ,\sigma_d\colon k\to\bbr$ be the field embeddings, where $\sigma_1$ is the identity embedding. We also may assume that the quaternion algebra $B\otimes_{k,\sigma_i}\bbr$ is isomorphic to $\mathrm{M}(2,\bbr )$ for each $1\le i\le r$ and isomorphic to $\bbh$ for $r<i\le d$.

For each $1\le i\le r$ we choose an isomorphism $\alpha_i\colon B\otimes_{k,\sigma_i}\bbr\to\mathrm{M}(2,\bbr )$. We obtain an embedding
$$\alpha\colon\mathrsfs{O}^1\hookrightarrow\SL (2,\bbr )^r,\quad x \mapsto (\alpha_1(x),\ldots ,\alpha_r(x))$$
descending to an embedding $\alpha\colon\PO\hookrightarrow\PSL (2,\bbr )^r$. We denote the image by $\Lambda =\alpha (\PO )$.
\begin{Theorem}
$\Lambda$ is an irreducible arithmetic lattice in $\PSL (2,\bbr )^r$.
\end{Theorem}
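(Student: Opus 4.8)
The plan is to identify $\Lambda$ with the image of the integer points of the $\bbq$-group $G=\operatorname{Res}_{k|\bbq}H$ inside the product of the noncompact almost simple factors of $G(\bbr)$, and then to read off the three assertions from the structure theory of arithmetic groups.

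First I would make $G(\bbr)$ explicit. Since $B\otimes_{k,\sigma_i}\bbr\cong\mathrm{M}(2,\bbr)$ for $1\le i\le r$ and $\cong\bbh$ for $r<i\le d$, one obtains $G(\bbr)\cong\SL(2,\bbr)^r\times\operatorname{SU}(2)^{d-r}$, whose second factor is compact. With respect to the lattice $\mathrsfs{O}\subset B$ one has $G(\bbz)=\mathrsfs{O}^1$, so by the Borel--Harish-Chandra theorem (see \cite{MR1278263}) $\mathrsfs{O}^1$ is a lattice in $G(\bbr)$. The projection $p\colon G(\bbr)\to\SL(2,\bbr)^r$ onto the noncompact part has compact kernel, hence is proper, so $p(\mathrsfs{O}^1)$ is again a lattice in $\SL(2,\bbr)^r$; dividing by the finite central subgroup $\{\pm\mathbf 1\}$ shows that $\Lambda=\alpha(\PO)$ is a lattice in $\PSL(2,\bbr)^r$. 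I would also remark that $\alpha$ is injective, since already its first component $\alpha_1\colon\PO\to\PSL(2,\bbr)$, induced by the identity embedding of $k$, is injective. Arithmeticity then holds essentially by definition: $\Lambda$ arises as the image of the integer points of a $\bbq$-group in the product of the noncompact almost simple factors of its group of real points, which is precisely the recipe defining an arithmetic lattice in $\PSL(2,\bbr)^r$.

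The assertion that needs a genuine argument is irreducibility, and as it is vacuous when $r=1$ we may assume $r\ge 2$. I would deduce it from the $\bbq$-simplicity of $G$: after base change to $\bar\bbq$ one has $G\times_\bbq\bar\bbq\cong\prod_{\sigma\colon k\hookrightarrow\bar\bbq}H\times_{k,\sigma}\bar\bbq$, a product of $d$ conjugate copies of a form of $\SL(2)$ on which $\Gal(\bar\bbq|\bbq)$ permutes the factors transitively, through its transitive action on the embeddings of $k$; hence $G$ has no proper nontrivial connected normal $\bbq$-subgroup. By the standard dictionary between $\bbq$-forms and arithmetic lattices, the arithmetic lattice attached to a $\bbq$-simple group is irreducible. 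Alternatively one argues directly: $\mathrsfs{O}^1$ is Zariski-dense in $G$ (as already used in the proof of Proposition \ref{StrongApproxSemiArithm}), so any virtual product decomposition of $\Lambda$ compatible with a splitting $\PSL(2,\bbr)^r=P'\times P''$ into blocks of $\PSL(2,\bbr)$-factors would force the Zariski closure of $\mathrsfs{O}^1$, that is all of $G$, to respect a corresponding $\bbq$-rational decomposition of $G$ as a product, which is impossible by $\bbq$-simplicity.

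I expect the lattice property and arithmeticity to be routine consequences of the Borel--Harish-Chandra theorem together with the relevant definitions; the one place where care is needed is the irreducibility step, specifically making precise that a virtual splitting of $\Lambda$ over the real factors descends to a genuine $\bbq$-rational splitting of $G$. For that reason I would prefer to phrase irreducibility via the $\bbq$-simplicity of $G=\operatorname{Res}_{k|\bbq}H$ and invoke the standard correspondence, rather than reproving it from scratch.
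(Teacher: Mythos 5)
Your argument is correct, but note that the paper does not prove this theorem at all: it simply cites Shimura's 1967 paper, so there is no internal proof to compare against. What you have written is a self-contained version of the standard argument that the citation is standing in for. The chain Borel--Harish-Chandra $\Rightarrow$ $\mathrsfs{O}^1$ is a lattice in $G(\bbr)\cong\SL(2,\bbr)^r\times\operatorname{SU}(2)^{d-r}$ $\Rightarrow$ projection away from the compact factors and the further quotient by the finite kernel of $\SL(2,\bbr)^r\to\PSL(2,\bbr)^r$ preserve the lattice property, is sound (one should say that $\mathrsfs{O}^1$ is \emph{commensurable} with $G(\bbz)$ for the chosen integral structure rather than equal to it, but this is harmless since being a lattice is a commensurability invariant). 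Arithmeticity is indeed true by definition once $\Lambda$ is exhibited in this form. For irreducibility, your reduction to the $\bbq$-simplicity of $G=\operatorname{Res}_{k|\bbq}H$ via the transitive Galois permutation of the factors of $G\times_{\bbq}\bar{\bbq}$ is the standard mechanism, and you are right to flag that the remaining content is the dictionary between virtual product decompositions of the lattice and $\bbq$-rational decompositions of $G$; that step is where a careless write-up could go wrong (one must pass to Zariski closures of the putative factors and use that $\mathrsfs{O}^1$, hence each finite-index subgroup, is Zariski-dense in $G$), but as you present it the logic is complete modulo facts that are genuinely standard. In short: your proof is a legitimate replacement for the paper's external reference, and arguably more informative to the reader, since it makes explicit exactly which structural inputs (semisimplicity of $G$, compactness of the ramified real places, $\bbq$-simplicity from restriction of scalars) each of the three asserted properties of $\Lambda$ depends on.
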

For the proof see e.g.\ \cite{MR0204426}.

Note that $\alpha (\Gamma )$ becomes a subgroup of $\Lambda$. It has finite index precisely if $\Gamma$ is already arithmetic; in every case $\alpha (\Gamma )$ is a Zariski-dense subgroup of $\Lambda$ by the proof of Proposition \ref{StrongApproxSemiArithm}. Zariski-dense subgroups of infinite index in arithmetic groups are called \emph{thin}, and so we have shown:
\begin{Corollary}\label{SemiArithmeticAreThin}
If $\Gamma$ is not arithmetic itself, the embedding $\alpha\colon\Gamma\to\Lambda$ realises $\Gamma$ as a thin group.
\end{Corollary}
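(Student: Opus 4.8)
The plan is to check, separately, the two conditions in the definition of a thin subgroup: that $\alpha(\Gamma)$ is Zariski-dense in $\Lambda$, and that it has infinite index in $\Lambda$. Both are essentially already available from results established above, so I do not expect a genuine obstacle; the only real work, namely the Zariski-density, has been carried out in the proof of Proposition \ref{StrongApproxSemiArithm}.

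For Zariski-density I would simply invoke that proof: there it is shown that $\tilde{\Gamma}$ is Zariski-dense in $G$ (by running Geninska's argument over $\bbc$ in place of $\bbr$). Since the embedding $\alpha$ is the restriction to real points of an algebraic morphism — it is cut out by the isomorphisms $\alpha_i\colon B\otimes_{k,\sigma_i}\bbr\to\mathrm{M}(2,\bbr)$ — the image of the Zariski-dense set $\tilde{\Gamma}$ under the surjection $G(\bbr)\to\SL (2,\bbr)^r$ is again Zariski-dense, and after passing to $\PSL$ one concludes that $\alpha(\Gamma)$ has the same Zariski closure as $\Lambda=\alpha(\PO )$, which is all of $\PSL (2,\bbr )^r$. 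Thus $\alpha(\Gamma)$ is Zariski-dense in $\Lambda$.

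For the index I would argue as follows. The first coordinate $\alpha_1$ of $\alpha$ is, up to $\GL (2,\bbr )$-conjugacy, the tautological inclusion $\PO\hookrightarrow\PSL (2,\bbr )$ coming from $\mathrsfs{O}\subset B\subset B\otimes_k\bbr =\mathrm{M}(2,\bbr )$; in particular $\alpha$ is injective and restricts to a bijection $\PO\to\Lambda$ carrying $\Gamma$ onto $\alpha(\Gamma)$, whence $(\Lambda :\alpha(\Gamma))=(\PO :\Gamma )$. By the equivalence (i) $\Leftrightarrow$ (v) of Proposition \ref{EquivalentCharacterisationsArithmetic}, this index is finite exactly when $\Gamma$ is arithmetic, so under the standing hypothesis it is infinite. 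Combining the two points, $\alpha(\Gamma)$ is a Zariski-dense subgroup of infinite index in the arithmetic lattice $\Lambda$, hence thin. The only point requiring a word of care is that $\alpha$ is injective and algebraic, so that both density and index transport correctly from $\PO$ to $\Lambda$; everything substantive has already been proved.
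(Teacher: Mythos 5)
Your proposal is correct and follows essentially the same route as the paper: the text preceding the corollary obtains Zariski-density of $\alpha(\Gamma)$ in $\Lambda$ from the proof of Proposition \ref{StrongApproxSemiArithm} (Geninska's argument) and infinite index from the equivalence (i) $\Leftrightarrow$ (v) of Proposition \ref{EquivalentCharacterisationsArithmetic}, exactly as you do. Your additional remarks on transporting density and index through the injective algebraic map $\alpha$ merely make explicit what the paper leaves implicit.
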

Let $\PSL (2,\bbr )^r$ operate by component-wise M\"{o}bius transformations on $\mathfrak{H}^r$; the induced action of $\Lambda$ on $\mathfrak{H}^r$ is properly discontinuous and has a quotient of finite volume. This motivates the following definition:
\begin{Definition}
A \emph{modular embedding} of $\Gamma$ is a holomorphic embedding $F\colon\mathfrak{H}\to\mathfrak{H}^r$ such that
$$F(\gamma\tau )=\alpha (\gamma )F(\tau )$$
for every $\gamma\in\Gamma$ and every $\tau\in\mathfrak{H}$.
\end{Definition}
The following result which will be used later on is \cite[Corollary 5]{MR1745404}:
\begin{Proposition}\label{ModularEmbeddingEnablesToPinDownTrace}
Let $\Gamma\subset\PSL (2,\bbr )$ be a semi-arithmetic group which satisfies the trace field property and admits a modular embedding, and let $k=\bbq (\tr\Gamma)$. Let $\gamma\in\tilde{\Gamma }$ be hyperbolic and let $\sigma\colon k\to\bbr$ be an embedding which is not the identity inclusion. Then $\lvert\sigma (\tr\gamma )\rvert <\lvert\tr\gamma\rvert$.
\end{Proposition}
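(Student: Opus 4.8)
The plan is to reinterpret both sides of the desired inequality as hyperbolic translation lengths and to exploit that a modular embedding is, component-wise, a holomorphic self-map of $\mathfrak{H}$, hence distance non-increasing for the Poincar\'{e} metric by the Schwarz--Pick lemma. Recall that a hyperbolic $g\in\PSL (2,\bbr )$ translates along its axis by $\ell (g)$, where $\lvert\tr g\rvert =2\cosh (\ell (g)/2)$, and that $\ell (g)=\min_{\tau\in\mathfrak{H}}d_{\mathfrak{H}}(\tau ,g\tau )$. Write the modular embedding as $F=(F_1,\dots ,F_r)\colon\mathfrak{H}\to\mathfrak{H}^r$, with $\sigma_1$ the identity inclusion; each $F_i\colon\mathfrak{H}\to\mathfrak{H}$ is holomorphic, satisfies $F_i(\gamma '\tau )=\alpha_i(\gamma ')F_i(\tau )$ for all $\gamma '\in\Gamma$, and $\tr^2\alpha_i(\gamma ')=\sigma_i(\tr^2\gamma ')$. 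Let $\sigma =\sigma_i\neq\sigma_1$ be the given embedding. If $i>r$ then $B\otimes_{k,\sigma_i}\bbr\cong\bbh$, so $\lvert\sigma_i(\tr\gamma )\rvert\le 2<\lvert\tr\gamma\rvert$ and we are done; likewise, if $2\le i\le r$ but $\alpha_i(\gamma )$ is not hyperbolic, then $\lvert\sigma_i(\tr\gamma )\rvert\le 2<\lvert\tr\gamma\rvert$. So from now on assume $2\le i\le r$ and $\alpha_i(\gamma )$ hyperbolic.

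Put $\ell =\ell (\gamma )$ and pick a point $p$ on the axis of $\gamma$, so that $d_{\mathfrak{H}}(p,\gamma p)=\ell$ and $p\neq\gamma p$ (as $\gamma$ has no fixed point in $\mathfrak{H}$). Applying $F_i$ and the Schwarz--Pick lemma,
\[
\ell (\alpha_i(\gamma ))\ \le\ d_{\mathfrak{H}}\bigl(F_i(p),\,\alpha_i(\gamma )F_i(p)\bigr)\ =\ d_{\mathfrak{H}}\bigl(F_i(p),\,F_i(\gamma p)\bigr)\ \le\ d_{\mathfrak{H}}(p,\gamma p)\ =\ \ell ,
\]
the first inequality being the displacement-minimising property of the translation length. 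Since $s\mapsto 2\cosh (s/2)$ is increasing on $[0,\infty )$, this already yields $\lvert\sigma_i(\tr\gamma )\rvert\le\lvert\tr\gamma\rvert$; it remains to rule out equality.

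This is the heart of the matter. Suppose $\ell (\alpha_i(\gamma ))=\ell$. Then every inequality in the chain above is an equality, so in particular $F_i$ attains equality in the Schwarz--Pick lemma at the pair of distinct points $p$, $\gamma p$; the rigidity clause of that lemma then forces $F_i$ to be a biholomorphic automorphism of $\mathfrak{H}$, i.e.\ an element of $\PSL (2,\bbr )$. But the equivariance $F_i\circ\gamma '=\alpha_i(\gamma ')\circ F_i$ holds for \emph{every} $\gamma '\in\Gamma$, so $\alpha_i(\gamma ')=F_i\gamma 'F_i^{-1}$ and hence $\sigma_i(\tr^2\gamma ')=\tr^2\alpha_i(\gamma ')=\tr^2\gamma '$ for all $\gamma '\in\tilde{\Gamma }$. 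Since the squared traces $\tr^2\gamma '$, $\gamma '\in\tilde{\Gamma }$, generate the invariant trace field, which coincides with $k$ by the trace field condition, $\sigma_i$ restricts to the identity on $k$ --- contradicting $\sigma_i\neq\sigma_1$. Therefore the inequality is strict, as claimed.

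The step I expect to be the main obstacle is precisely this last one, the passage from $\le$ to $<$: the soft distance-decreasing estimate gives only $\le$, and strictness genuinely needs the equality (rigidity) case of the Schwarz--Pick lemma, combined with the observation that a M\"{o}bius transformation conjugating $\Gamma$ onto $\alpha_i(\Gamma )$ would force $\sigma_i$ to preserve every squared trace and hence fix $k$ pointwise. The remaining ingredients --- the trace/translation-length dictionary, the fact that the translation length is the infimum of the displacement function, and the identity $\tr^2\alpha_i(\gamma ')=\sigma_i(\tr^2\gamma ')$ built into the construction of the $\alpha_i$ --- are standard; one also notes in passing that $F_i$ is automatically non-constant, since $\alpha_i(\Gamma )$ contains a hyperbolic element and so has no fixed point in $\mathfrak{H}$.
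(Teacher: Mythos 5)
Your proof is correct and takes essentially the approach the paper relies on: the paper does not reprove this proposition but cites \cite[Corollary 5]{MR1745404}, remarking in its concluding section that the statement ``is a consequence of the Schwarz Lemma.'' Your translation-length comparison via Schwarz--Pick, with the rigidity clause ruling out equality (an automorphism $F_i$ would force $\sigma_i$ to fix every squared trace and hence all of $k$), is precisely that argument.
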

Note that if $\Gamma$ is an arithmetic group then even $\lvert\sigma (\tr\gamma )\rvert <2$ by Proposition \ref{EquivalentCharacterisationsArithmetic}.

\section{Congruence Rigidity}\label{SectionCongruenceRigidity}

\noindent Let $\Gamma\subset\PSL (2,\bbr )$ be a semi-arithmetic lattice satisfying the trace field condition, with trace field $k=\bbq (\tr\Gamma )$. Let $B=k[\tilde{\Gamma }]$ be the associated quaternion algebra and $G$ the algebraic group over $\bbq$ with $G(\bbq )=B^1$. Let $\mathrsfs{O}\subset B$ be a maximal order containing $\tilde{\Gamma }$, and let $\mathfrak{m}\subset\mathfrak{o}_k$ be such that a finite index subgroup of $\Gamma$ is ad\`{e}lically dense in $\PO (\mathfrak{m})$; in particular, $\mathfrak{m}$ satisfies the conclusion of Proposition \ref{CorollaryGammaLocallyCongruence}.

For the statement of the next proposition, let $\mathfrak{m}=\mathfrak{l}_1^{r_1}\cdots\mathfrak{l}_n^{r_n}$ be the prime factorisation of $\mathfrak{m}$. Let $\ell_j$ be the norm of the prime ideal $\mathfrak{l}_j$. Then $S(\mathfrak{m})$ is the finite set of all rational primes diving some $\lvert\PSL (2,\ell_j)\rvert$ (this includes the primes dividing $\ell_j$ or $\ell_j+1$). Note that if $\mathfrak{m}'$ is an ideal which has the same prime divisors as $\mathfrak{m}$ and if $\ell$ is a rational prime dividing the order of $\PO /\PO (\mathfrak{m}')$, then $\ell\in S(\mathfrak{m})$. Also $S(6)$ is the set consisting of $2,3$ and all prime divisors of orders of $\PSL (2,q)$ where $q$ is the norm of a prime ideal $\mathfrak{p}$ in $k$ with $\mathfrak{p}\mid 6$. Finally $S(\Gamma )$ is the union of $S(\mathfrak{m})\cup S(6)$, the primes lying over the ramification divisor of $B$ and the primes that ramify in $k$. Still, $S(\Gamma )$ is a finite set of rational primes.
\begin{Proposition}\label{CongruenceQuotientsGivePrimes}
Let $\Gamma$ as above, and let $q=p^f$ be an odd prime power which is prime to all primes in $S(\Gamma )$. Let $\Delta\subset\Gamma$ be a normal congruence subgroup such that $\Gamma /\Delta\simeq\PSL (2,q)$. Then there exists a unique prime $\mathfrak{p}$ of norm $q$ in $k$ such that $\Delta =\Gamma (\mathfrak{p})$.
\end{Proposition}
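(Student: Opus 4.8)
The plan is to prove that the simple quotient $\Gamma /\Delta\simeq\PSL (2,q)$ is ``concentrated at a single prime'' of $k$, and that this prime is forced to have norm $q$ with $\Delta$ equal to the corresponding principal congruence subgroup. Note first that since $q$ is odd and coprime to $2,3\in S(\Gamma )$ we have $q\ge 5$, so $\PSL (2,q)$ is simple and nonabelian, and $p\mid\lvert\PSL (2,q)\rvert$ while $p\notin S(\Gamma )$; these facts will be used repeatedly. \textbf{Step 1 (reduction to a level coprime to $\mathfrak m$).} I would first pick an ideal $\mathfrak b$ with $\Gamma (\mathfrak b)\subseteq\Delta$, write $\mathfrak b=\mathfrak a\mathfrak d$ with $\mathfrak a$ coprime to $\mathfrak m$ and $\mathfrak d$ supported on the primes dividing $\mathfrak m$, and replace $\mathfrak d$ by a sufficiently high power $\mathfrak m'$ of $\operatorname{rad}\mathfrak m$ so that $\Gamma (\mathfrak a)\cap\Gamma (\mathfrak m')=\Gamma (\mathfrak a\mathfrak m')\subseteq\Gamma(\mathfrak b)\subseteq\Delta$, where $\mathfrak m'$ has the same prime divisors as $\mathfrak m$. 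Let $\pi\colon\Gamma\twoheadrightarrow\Gamma /\Delta$ be the quotient map. Then $\pi (\Gamma (\mathfrak a))$ is a normal subgroup of $\PSL (2,q)$, hence trivial or everything; it cannot be everything, for otherwise $\PSL (2,q)$ would be a quotient of $\Gamma (\mathfrak a)/(\Gamma (\mathfrak a)\cap\Gamma (\mathfrak m'))$, which embeds into $\Gamma /\Gamma (\mathfrak m')\hookrightarrow\PO /\PO (\mathfrak m')$, a group all of whose prime order divisors lie in $S(\mathfrak m)\subseteq S(\Gamma )$ by the defining property of $S(\mathfrak m)$, contradicting $p\mid\lvert\PSL (2,q)\rvert$. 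Hence $\Gamma (\mathfrak a)\subseteq\Delta$ with $\mathfrak a$ coprime to $\mathfrak m$.

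\textbf{Step 2 (localising the simple quotient).} By Proposition \ref{CorollaryGammaLocallyCongruence}, $\pi$ factors through $\Gamma /\Gamma (\mathfrak a)\cong\PO /\PO (\mathfrak a)$, and by Corollary \ref{ChineseRemainderForPSL} the kernel of $\PO /\PO (\mathfrak a)\to\prod_j\PO /\PO (\mathfrak p_j^{r_j})$ (for $\mathfrak a=\mathfrak p_1^{r_1}\cdots\mathfrak p_n^{r_n}$) is abelian, hence dies in $\PSL (2,q)$; so I get a surjection $\beta\colon\prod_j M_j\twoheadrightarrow\PSL (2,q)$ with $M_j=\PO /\PO (\mathfrak p_j^{r_j})$. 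Each $M_j$ is normal in the product and the $M_j$ pairwise commute, so $\beta (M_j)$ is normal in $\PSL (2,q)$, hence $1$ or everything; as $\PSL (2,q)$ has trivial centre, at most one $j$ can have $\beta (M_j)=\PSL (2,q)$, and as $\beta$ is surjective exactly one, say $j_0$, does, the rest being killed. Thus $\PSL (2,q)$ is a quotient, hence a composition factor, of $M_{j_0}=\PO /\PO (\mathfrak p_{j_0}^{r_{j_0}})$. Now I invoke the structure of $\PO /\PO (\mathfrak p^r)$ from Section 5: if $\mathfrak p_{j_0}$ ramifies in $B$ then $M_{j_0}$ is solvable, which is impossible; if $\mathfrak p_{j_0}\mid 6$ then every composition factor of $M_{j_0}$ has order divisible only by primes in $S(6)\subseteq S(\Gamma )$, again impossible as $p\mid\lvert\PSL (2,q)\rvert$; so $\mathfrak p:=\mathfrak p_{j_0}$ is unramified in $B$ and prime to $6$, and then $\PSL (2,N\mathfrak p)$ is the unique nonabelian composition factor of $M_{j_0}$, whence $N\mathfrak p=q$ by the classification $\PSL (2,q)\simeq\PSL (2,q')\Leftrightarrow q=q'$ for odd prime powers $\ge 5$.

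\textbf{Step 3 (identifying $\Delta$ and uniqueness).} Since $\mathfrak p\nmid 2$, Lemma \ref{OoneModOonepIsSLTwo} identifies $M_{\mathfrak p}$ with $\PSL (2,\mathfrak o_k/\mathfrak p^r)$, $r=r_{j_0}$, and the kernel $N_1$ of its reduction onto $\PSL (2,\mathfrak o_k/\mathfrak p)\simeq\PSL (2,q)$ is a $p$-group by iterating Lemma \ref{HigherSubquotientsSolvable}. As $N_1$ is solvable and $\PSL (2,q)$ is simple nonabelian, \emph{any} epimorphism $M_{\mathfrak p}\twoheadrightarrow\PSL (2,q)$ has kernel exactly $N_1$; applying this to the epimorphism coming from $\beta$, we see that $\Delta=\ker\pi$ is the preimage of $N_1$ under $\Gamma\hookrightarrow\PO\twoheadrightarrow\PO /\PO (\mathfrak p^r)$, that is, $\Delta=\Gamma\cap\PO (\mathfrak p)=\Gamma (\mathfrak p)$. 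For uniqueness, any prime $\mathfrak p'$ with $\Gamma (\mathfrak p')=\Delta$ must have $N\mathfrak p'=q$ and be coprime to $\mathfrak m$ (apply Step 1 to $\mathfrak p'$, then the analysis of Step 2 with $\mathfrak a=\mathfrak p'$); and if $\mathfrak p\neq\mathfrak p'$ were two such primes, then $\Gamma\twoheadrightarrow\PO /\PO (\mathfrak p\mathfrak p')\twoheadrightarrow\PO /\PO (\mathfrak p)\times\PO /\PO (\mathfrak p')\simeq\PSL (2,q)^2$ would factor through $\Gamma /(\Gamma (\mathfrak p)\cap\Gamma (\mathfrak p'))=\Gamma /\Delta\simeq\PSL (2,q)$, impossible by counting.

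\textbf{Main obstacle.} The genuinely delicate part, I expect, is the composition-factor bookkeeping in Steps 1--2: one has to be certain that a factor isomorphic to $\PSL (2,q)$ cannot enter through the ``$\mathfrak m$-part'' of a congruence level, through a prime ramified in $B$, or through a prime above $6$, and that the prime through which it does enter has norm \emph{exactly} $q$ and occurs with multiplicity one. The set $S(\Gamma )$ is engineered precisely to exclude the first and third possibilities (via the $S(\mathfrak m)$- and $S(6)$-conditions), while solvability of $\mathrsfs{O}^1/\mathrsfs{O}^1(\mathfrak p^r)$ at primes ramified in $B$ handles the second; once this is in place, the passage from ``$\PSL (2,q)$ is a composition factor at $\mathfrak p$'' to ``$\Delta=\Gamma (\mathfrak p)$'' is comparatively soft, resting only on $N_1$ being a $p$-group so that the level cannot be ``lowered past $\mathfrak p$''.
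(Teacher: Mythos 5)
Your proof is correct and follows the same overall skeleton as the paper's: reduce to a congruence level coprime to $\mathfrak{m}$ via the order-divisibility obstruction built into $S(\mathfrak{m})$, transfer the quotient map to $\PO /\PO (\mathfrak{a})$ via Proposition \ref{CorollaryGammaLocallyCongruence}, localise the simple quotient at a single prime, and use the structure theory of the local quotients to pin down the norm and the level. The one genuinely different ingredient is the localisation step: the paper first kills the solvable normal subgroup $\PO (\operatorname{rad}(\mathfrak{n}'))/\PO (\mathfrak{n}')$ to reduce to radical level and then invokes Lemma \ref{LemmaOnFiniteGroups} (an induction on epimorphisms $\prod_j\PSL (2,q_j)\twoheadrightarrow\PSL (2,q')$), whereas you observe directly that the factors $M_j$ of $\prod_j\PO /\PO (\mathfrak{p}_j^{r_j})$ are pairwise commuting normal subgroups, so at most one of them can surject onto the centreless simple group $\PSL (2,q)$. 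Your variant is slightly more robust: it applies verbatim even when some $\PO /\PO (\mathfrak{p}_j^{r_j})$ is not literally of the form $\PSL (2,q_j)$ (ramified primes, primes over $6$, higher prime powers), cases which the paper's appeal to Lemma \ref{LemmaOnFiniteGroups} handles only implicitly; the price is that you must then separately rule out the ramified and $\mathfrak{p}\mid 6$ factors by composition-factor bookkeeping, which you do correctly, and identify the kernel at $\mathfrak{p}^r$ via the solvability of $N_1$ rather than having already reduced to radical level. You also spell out the uniqueness of $\mathfrak{p}$ (via the impossible surjection of $\Gamma /\Delta$ onto $\PSL (2,q)^2$), which the paper leaves to the reader.
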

\begin{proof}
There exists an ideal $\mathfrak{n}$ such that $\Delta\supseteq\Gamma (\mathfrak{n})$ and a finite index subgroup of $\Delta$ is ad\`{e}lically dense in $\PO (\mathfrak{n})$. We may assume that $\mathfrak{m}$ divides $\mathfrak{n}$. Write $\mathfrak{n}=\mathfrak{n}'\cdot\mathfrak{n}_{\mathfrak{m}}$ with $\mathfrak{n}'$ coprime to $\mathfrak{m}$ and $\mathfrak{n}_{\mathfrak{m}}$ having the same prime divisors as $\mathfrak{m}$; then $\Gamma$ also contains a subgroup which is ad\`{e}lically dense in  $\PO (\mathfrak{n}_{\mathfrak{m}})$. By Proposition \ref{CorollaryGammaLocallyCongruence} this entails that $\Gamma$ surjects onto $\PO /\PO (\mathfrak{n}')$.

Denote the quotient map modulo $\Delta$ by
\begin{equation*}
\pi\colon\Gamma\to\PSL (2,q).
\end{equation*}
Note that $\pi$ is continuous in the ad\`{e}lic topology on $\Gamma$ since it vanishes on $\Gamma (\mathfrak{n})$.

Now $\Gamma (\mathfrak{n}')=\Gamma\cap\PO (\mathfrak{n}')$ is a normal subgroup of $\Gamma$, hence its image under $\pi$ is a normal subgroup of $\PSL (2,q)$. Since that group is simple, the image can only be $\PSL (2,q)$ or the trivial group. Assume it were the entire group, then in the sequence
\begin{equation*}
\PSL (2,q)\twoheadleftarrow \Gamma (\mathfrak{n}')/\Gamma (\mathfrak{n})\hookrightarrow\PO (\mathfrak{n}')/\PO (\mathfrak{n})\simeq\PO /\PO (\mathfrak{n}_{\mathfrak{m}})
\end{equation*}
(where the isomorphism is by Corollary \ref{CorChangingTheLevel}) the order of the left hand side would divide the order of the right hand side. But the former is divisible by $p$, the latter only by primes in $S(\Gamma )$. A contradiction, hence the image of $\Gamma (\mathfrak{n}')$ under $\pi$ is the trivial group. In other words,
\begin{equation*}
\Delta\supseteq\Gamma (\mathfrak{n}').
\end{equation*}
This implies that $\pi$ descends to an epimorphism
\begin{equation*}
\pi\colon\Gamma /\Gamma (\mathfrak{n}')\twoheadrightarrow\PSL (2,q).
\end{equation*}
By Proposition \ref{CorollaryGammaLocallyCongruence} the inclusion $\Gamma\subseteq\PO$ induces an isomorphism
\begin{equation*}
\alpha\colon\Gamma /\Gamma (\mathfrak{n}')\overset{\simeq}{\longrightarrow}\PO /\PO (\mathfrak{n}').
\end{equation*}
So by composition we obtain an epimorphism $\pi\circ\alpha^{-1}\colon\PO /\PO (\mathfrak{n}')\twoheadrightarrow\PSL (2,q)$. Let $\mathfrak{n}'=\mathfrak{p}_1^{r_1}\cdots\mathfrak{p}_n^{r_n}$ with distinct prime ideals $\mathfrak{p}_j$, and let $\operatorname{rad}(\mathfrak{n}')=\mathfrak{p}_1\cdots\mathfrak{p}_n$. Then $\PO (\operatorname{rad}(\mathfrak{n}'))/\PO (\mathfrak{n}')$ is a solvable normal subgroup of $\PO /\PO (\mathfrak{n}')$ by Lemma \ref{HigherSubquotientsSolvable}, so its image by $\pi\circ\alpha^{-1}$ has to be a solvable normal subgroup of $\PSL (2,q)$, i.e.\ trivial. Therefore $\pi\circ\alpha^{-1}$ factors through $\PO /\PO (\operatorname{rad}(\mathfrak{n}'))$; we summarise this in a diagram:
\begin{equation}\label{DiagramWithDashedEpi}
\xymatrix{
\Gamma /\Gamma (\mathfrak{n}') \ar[r]^{\simeq\hspace{0.6cm}}_{\alpha\hspace{0.6cm}} \ar@{->>}[rd]_{\pi } &
 \PO /\PO (\mathfrak{n}') \ar@{->>}[r]  \ar@{->>}[d] &
\PO /\PO (\operatorname{rad}(\mathfrak{n}')) \ar@{-->>}[ld]\\
& \PSL (2,q)
}
\end{equation}
Now the rightmost projects onto
\begin{equation}\label{ProductOfPOones}
\PO /\PO (\mathfrak{p}_1)\times\cdots\times\PO /\PO (\mathfrak{p}_n),
\end{equation}
and by Corollary \ref{ChineseRemainderForPSL} the kernel of this projection is an abelian normal subgroup which is therefore mapped to the identity element by the dashed arrow in (\ref{DiagramWithDashedEpi}). Hence that dashed arrow is defined on (\ref{ProductOfPOones}); by Lemma \ref{LemmaOnFiniteGroups} it actually has to factor through the projection onto one of them, composed with an isomorphism. We hence obtain
\begin{equation*}
\xymatrix{
\Gamma /\Gamma (\mathfrak{n}') \ar[r]^{\simeq\hspace{0.6cm}}_{\alpha\hspace{0.6cm}} \ar@{->>}[rd]_{\pi } &
 \PO /\PO (\mathfrak{n}') \ar@{->>}[r]  \ar@{->>}[d] &
\PO /\PO (\mathfrak{p}_j) \ar@{-->}[ld]^{\simeq }\\
& \PSL (2,q)
}
\end{equation*}
for some $1\le j\le n$. We may shorten this to
\begin{equation}\label{DiagramWithDashedIso}
\xymatrix{
\Gamma /\Gamma (\mathfrak{p}_j) \ar[rr]^{\simeq\hspace{0.5cm}}_{\alpha '\hspace{0.5cm}} \ar@{->>}[rd]_{\pi '} & &
\PO /\PO (\mathfrak{p}_j) \ar@{-->}[ld]^{\simeq }\\
& \PSL (2,q)
}
\end{equation}
with $\alpha '$ again induced by the inclusion $\Gamma\subseteq\PO$. In this diagram $\pi '$ is obviously an isomorphism, therefore $\Delta =\ker\pi$ is equal to $\Gamma (\mathfrak{p}_j)$. The dashed isomorphism in (\ref{DiagramWithDashedIso}) shows that the norm of $\mathfrak{p}_j$ is $q$.
\end{proof}

\begin{Remark}\label{ReconstructionOfArithmeticSimilarityClass}
We note that this proposition enables us to reconstruct the splitting behaviour of almost all primes in $k$ from $\Gamma$ and its congruence subgroups: Let $p\notin S(\Gamma )$ be a rational prime in $\Gamma$. Then there exist only finitely many normal congruence subgroups $\Delta\triangleleft\Gamma$ such that $\Gamma /\Delta\simeq\PSL (2,q)$ for some power $q$ of $f$. Let these be $\Delta_1,\ldots ,\Delta_n$, and let the corresponding quotients be $\PSL (2,p^{f_1}),\ldots ,\PSL (2,p^{f_n})$.

On the other hand consider the prime decomposition $(p)=\mathfrak{p}_1\cdots\mathfrak{p}_m$ in $k$. Then $n=m$, and up to renumeration $\Delta_j=\Gamma (\mathfrak{p}_j)$ and $N(\mathfrak{p}_j)=p^{f_j}$. In particular we can reconstruct $[k\colon \bbq ]=f_1+\ldots +f_n$ from the knowledge of $\Gamma$ and its congruence subgroups.
\end{Remark}

\begin{proof}[Proof of Theorem A] By Theorem \ref{CullerShalenForPSLTwoR} we may replace $\Gamma_j$ by finite index subgroups corresponding to each other under the isomorphism $f$. Hence we may assume that each $\Gamma_j$ is torsion-free and satisfies the trace field condition. Again by Theorem \ref{CullerShalenForPSLTwoR} it suffices to show that $\tr^2 f(\gamma )=\tr^2 \gamma\in\bbr$ for each $\gamma\in\Gamma_1$.

Denote the trace field of $\Gamma_j$ by $k_j$. Each number $a\in\mathfrak{o}_{k_j}$ has a \emph{characteristic polynomial} $\chi_a(x)\in\bbz [x]$ which can be described as follows:
\begin{itemize}
\item it is the characteristic polynomial of the map $k_j\to k_j$, $v\mapsto av$ interpreted as a $\bbq$-linear map;
\item it is equal to $\prod_{\sigma }(x-\sigma (a))$. Here $\sigma$ runs through a system of representatives of $\Gal (L_j|\bbq )$ modulo $\Gal (L_j|k_j)$ where $L_j$ is the Galois closure of $k_j$.
\end{itemize}
Now let $p$ be a rational prime not in $S(\Gamma_1)\cup S(\Gamma_2)$. By Remark \ref{ReconstructionOfArithmeticSimilarityClass} we can decompose $p\mathfrak{o}_{k_j}$ into prime ideals
$$p\mathfrak{o}_{k_1}=\mathfrak{p}_1\cdots\mathfrak{p}_n,\quad p\mathfrak{o}_{k_2}=\mathfrak{q}_1\ldots\mathfrak{q}_n$$
in such a way that
\begin{equation}\label{HowfPermutesCongruenceSubgroups}
f(\Gamma_1(\mathfrak{p}_j))=\Gamma_2(\mathfrak{q}_j)\text{ and }\mathfrak{o}_{k_1}/\mathfrak{p}_j\simeq\mathfrak{o}_{k_2}/\mathfrak{q}_j.
\end{equation}
 Then
\begin{equation}\label{ProductsPjQj}
\mathfrak{o}_{k_1}/p\mathfrak{o}_{k_1}\simeq\mathfrak{o}_{k_1}/\mathfrak{p}_1\times\cdots\times\mathfrak{o}_{k_1}/\mathfrak{p}_d
\end{equation}
is a finite-dimensional $\bbf_p$-algebra, and we may similarly define the characteristic polynomial $\chi_{\overline{b}}(x)\in\bbf_p[x]$ of an element $\overline{b}\in\mathfrak{o}_{k_1}/p\mathfrak{o}_{k_1}$ as the characteristic polynomial of the $\bbf_p$-linear endomorphism of $\mathfrak{o}_{k_1}/p\mathfrak{o}_{k_1}$ given by multiplication by $\overline{b}$. Then for $a\in\mathfrak{o}_{k_1}$ clearly
\begin{equation}\label{ComputeCharPolyModp}
\chi_a(x)\bmod p =\chi_{a\bmod p}(x)\in\bbf_p[x].
\end{equation}
We now claim that the characteristic polynomials of $\tr^2\gamma$ and $\tr^2f(\gamma )$ are congruent modulo $p$. To see this we use the abstract version of squared traces on finite groups introduced in section \ref{SectionFiniteGroups}. For each $1\le j\le n$, using (\ref{HowfPermutesCongruenceSubgroups}) we obtain an isomorphism of finite groups $\bar{f}\colon\Gamma_1/\Gamma_1(\mathfrak{p}_j)\to\Gamma_2/\Gamma_2(\mathfrak{q}_j)$. By the remark after Definition \ref{AbstractTraces}, $\tr^2\gamma\bmod\mathfrak{p}_j$ and $\tr^2f(\gamma )\bmod\mathfrak{q}_j$ are Galois-conjugate elements of the finite field $\bbf_q\simeq\mathfrak{o}_{k_1}/\mathfrak{p}_j\simeq\mathfrak{o}_{k_2}/\mathfrak{q}_j$. Hence there exists an isomorphism of $\bbf_p$-algebras
$$\alpha_j\colon\mathfrak{o}_{k_1}/\mathfrak{p}_j\overset{\simeq}{\longrightarrow}\mathfrak{o}_{k_2}/\mathfrak{q}_j$$
with $\alpha_j(\tr^2\gamma \bmod\mathfrak{p}_j)=\tr^2f(\gamma )\bmod\mathfrak{q}_j$. Gluing these together component-wise in (\ref{ProductsPjQj}) yields an isomorphism of $\bbf_p$-algebras $\alpha\colon\mathfrak{o}_{k_1}/p\mathfrak{o}_{k_1}\to\mathfrak{o}_{k_2}/p\mathfrak{o}_{k_2}$ with $\alpha (\tr^2\gamma\bmod p)=\tr^2f(\gamma )\bmod p$. Since characteristic polynomials are stable under algebra isomorphisms, we obtain
$$\chi_{\tr^2\gamma\bmod p}(x)=\chi_{\tr^2f(\gamma )\bmod p}(x)\in\bbf_p[x].$$
By (\ref{ComputeCharPolyModp}), this means
$$\chi_{\tr^2\gamma }(x)\equiv\chi_{\tr^2f(\gamma )}(x)\bmod p.$$
But this holds for infinitely many $p$, so
$$\chi_{\tr^2\gamma }(x)=\chi_{\tr^2f(\gamma )}(x)\in\bbz [x].$$

Since we had assumed $\Gamma_1$ to be torsion-free, $\gamma$ cannot be elliptic. If it is parabolic, then $\tr^2\gamma =4$ and therefore $\chi_{\tr^2\gamma }(x)=(x-4)^d$. Hence also the characteristic polynomial of $f(\gamma )$ is $(x-4)^d$, and since $\tr^2f(\gamma )$ is a zero of this polynomial, $\tr^2f(\gamma )=4$, hence $f(\gamma )$ is parabolic as well.

Finally assume that $\gamma$ is hyperbolic. Then $f(\gamma )$ must also be hyperbolic because it cannot be parabolic (else $\gamma$ would be parabolic by the inverse of the previous argument). By Proposition \ref{ModularEmbeddingEnablesToPinDownTrace}, $\tr^2\gamma$ is the largest zero of $\chi_{\tr^2\gamma }(x)$, similarly for $\tr^2f(\gamma )$. Therefore $\tr^2(\gamma )=\tr^2f(\gamma )$.
\end{proof}

\section{An Example}

\noindent In our proof of Theorem A we did not use the full assumption that all congruence subgroups are mapped to congruence subgroups by the given isomorphism. We spell out in a concrete example how far an isomorphism between non-conjugate arithmetic groups can be from preserving congruence subgroups.

In \cite{MR702765} we find a complete list of all arithmetic groups of signature $(1;2)$, i.e.\ whose associated Riemann surfaces have genus one and which have one conjugacy class of elliptic elements, these elements being of order two. In particular all these groups are abstractly isomorphic, and we may just pick the first two of them:
$\Gamma_1'$ is generated by the two M\"{o}bius transformations
$$\alpha_1=\pm\begin{pmatrix}\frac{1+\sqrt{5}}{2} & 0\\ 0&\frac{-1+\sqrt{5}}{2}\end{pmatrix}\text{ and }\beta_1=\pm\begin{pmatrix}\sqrt{3} &\sqrt{2}\\ \sqrt{2} &\sqrt{3}\end{pmatrix}\! ,$$
$\Gamma_2'$ by the two M\"{o}bius transformations
$$\alpha_2=\pm\begin{pmatrix}\sqrt{2}+1 & 0\\ 0&\sqrt{2}-1\end{pmatrix}\text{ and }\beta_2=\pm\frac{1}{2}\begin{pmatrix}\sqrt{6} &\sqrt{2}\\ \sqrt{2} &\sqrt{6}\end{pmatrix}\! .$$
These are, respectively, generators satisfying the relation $(\alpha_j\beta_j\alpha_j^{-1}\beta_j^{-1})^2=1$. So there exists a group isomorphism $f\colon\Gamma_1'\to\Gamma_2'$ with $f(\alpha_1)=\alpha_2$ and $f(\beta_1)=\beta_2$. The $\Gamma_j'$ do not satisfy the trace field condition, but the $\Gamma_j=(\Gamma_j')^{(2)}$ (between whom $f$ also induces an isomorphism) do; in both cases the invariant trace field is $\bbq$.

Then, with finitely many exceptions, $\Gamma_1/\Gamma_1(p)\simeq\PSL (2,p)\simeq\Gamma_2/\Gamma_2(p)$ for rational primes $p$; nevertheless, the proof of Theorem A shows that there can be only finitely many $p$ such that $f(\Gamma_1(p))$ is a congruence subgroup (and hence only finitely many $p$ with $f(\Gamma_1(p))=\Gamma_2(p)$).

\section{Concluding Remarks}

\begin{Remark}
The assumption that both groups admit a modular embedding is crucial although it only enters in the very last step of the proof. If $\Gamma$ is a semi-arithmetic lattice with invariant trace field $k$ and $\sigma\colon k\to\bbr$ a field embedding we obtain in a natural way a group $i_\sigma (\Gamma )\subset\PSL (2,\bbr )$, see \cite[Remark 4]{MR1745404}. There exist semi-arithmetic lattices $\Gamma$ with nontrivial Galois conjugates $i_\sigma (\Gamma )$ that are again lattices, and then the isomorphism $\Gamma\to i_\sigma (\Gamma )$ preserves congruence subgroups but not traces. For an explicit construction see e.g.\ \cite{AgolNew} referring to \cite[Proposition 4.11]{MR1643429}. But if $\Gamma$ admits a modular embedding, then none of the nontrivial Galois conjugates $i_\sigma (\Gamma )$ can be discrete by \cite[Theorem 3]{MR1745404}.

Note that the existence of a modular embedding enters the proof via Proposition \ref{ModularEmbeddingEnablesToPinDownTrace} which is its only genuinely non-algebraic ingredient: it is a consequence of the Schwarz Lemma.

One may still ask whether a weakened version of our main theorem holds in the general case: if $f\colon\Gamma_1\to\Gamma_2$ is an isomorphism between semi-arithmetic lattices in $\PSL (2,\bbr )$ respecting congruence subgroups, is it the composition of an inner automorphism of $\PGL (2,\bbr )$ with a Galois conjugation of the trace field?
\end{Remark}

\begin{Remark}
There exist arithmetic Fuchsian groups with different trace fields but whose congruence completions are isomorphic away from a finite set of primes. To see this, start with the polynomial in the remark after \cite[Theorem 5.1]{MR1800034}: the splitting field of this polynomial is a totally real Galois extension of $\bbq$ with Galois group $\PSL (2,7)$. By the discussion in \cite[p. 358--359]{MR0447188} such a field contains two subfields $k_1$, $k_2$ which are not isomorphic but have the same Dedekind zeta function. Then there exists a finite set $S$ of rational primes such that $\bba_{k_1}^S\simeq\bba_{k_2}^S$. From this we can easily construct arithmetic Fuchsian groups over $k_1$ and $k_2$ with isomorphic prime-to-$S$ congruence completion.

There also exist non-isomorphic number fields with isomorphic finite ad\`{e}le rings (at all primes), see \cite{MR736723}. But no construction seems to be known where these fields are totally real.
\end{Remark}

\providecommand{\bysame}{\leavevmode\hbox to3em{\hrulefill}\thinspace}
\providecommand{\MR}{\relax\ifhmode\unskip\space\fi MR }
\providecommand{\MRhref}[2]{%
  \href{http://www.ams.org/mathscinet-getitem?mr=#1}{#2}
}
\providecommand{\href}[2]{#2}

\end{document}